\documentclass[12pt,final]{amsart}
 
 
\usepackage{a4wide}
\usepackage[notcite,notref,color]{showkeys} 

\usepackage{amsmath,amssymb,amsthm,bbm} 
\usepackage{enumitem} 
\usepackage[overload]{textcase}
 
\usepackage{epsfig,psfrag,color} 
\usepackage{verbatim}

\usepackage[colorlinks,bookmarks,linkcolor=black,citecolor=black]{hyperref} 

 
\setlist{itemsep=3pt,parsep=0pt,topsep=2pt,partopsep=0pt}  
\setlist{leftmargin=2.5\parindent} 

 
\def\itm#1{\rm ({#1})} 
\def\itmit#1{\itm{\it #1\,}} 
\def\rom{\itmit{\roman{*}}} 
\def\abc{\itmit{\alph{*}}}
\def\abcstar{\itmit{\alph{*}*}}
 
\def\endofClaim{\hfill\scalebox{.6}{$\Box$}}

 
\let\subset\subseteq  
\let\eps\varepsilon 
\let\rho\varrho 
\def\dcup{\cup}



\def\cB{\mathcal{B}}

\def\cD{\mathcal{D}}
\def\cF{\mathcal{F}}
\def\cG{\mathcal{G}}
\def\cH{\mathcal{H}}

\def\cM{\mathcal{M}}
\def\cP{\mathcal{P}}
\def\cS{\mathcal{S}}
\def\cT{\mathcal{T}}

\def\bfu{\mathbf{u}}

\def\le{\leqslant}
\def\ge{\geqslant}

 
\newtheorem{theorem}{Theorem}
\newtheorem*{SzRL}{Szemer\'edi's Regularity Lemma}
\newtheorem*{SzRLdeg}{Szemer\'edi's Regularity Lemma (minimum degree form)}
\newtheorem*{Count}{Counting Lemma}
\newtheorem{lemma}[theorem] {Lemma}    
    
\newtheorem{prop}[theorem] {Proposition}   
\newtheorem{fact}[theorem]{Fact} 
\newtheorem{obs}[theorem] {Observation}  
\newtheorem{conj}[theorem]{Conjecture}
\newtheorem{prob}[theorem]{Problem} 
  
\theoremstyle{definition}
\newtheorem{definition}[theorem]{Definition}
\newtheorem{remark}[theorem]{Remark}
\theoremstyle{remark} 
\newtheorem{claim}[theorem]{Claim}  

\newcommand{\oldqed}{}

\newenvironment{claimproof}[1][Proof]{
  \renewcommand{\oldqed}{\qedsymbol}
  \renewcommand{\qedsymbol}{\endofClaim}
  \begin{proof}[#1]
}{
  \end{proof}
  \renewcommand{\qedsymbol}{\oldqed}
}

 
\newcommand{\By}[2]{\overset{\mbox{\tiny{#1}}}{#2}} 
\newcommand{\ByRef}[2]{   \By{\eqref{#1}}{#2} }

\newcommand{\eqByRef}[1]{ \ByRef{#1}{=} } 
\newcommand{\lByRef}[1]{  \ByRef{#1}{<} }

\newcommand{\geByRef}[1]{ \ByRef{#1}{\ge} }

\newcommand{\NATS}{\mathbb{N}} 

\newcommand{\Prob}{\mathbb{P}}

\def\Pr{\mathbb{P}}

\newcommand{\ds}{\displaystyle}

\newcommand{\ol}{\overline}
\newcommand{\girth}{\textup{girth}}
\newcommand{\Kn}{\textup{Kn}}
\newcommand{\Bor}{\textup{Bor}}
\newcommand{\BH}{\textup{BH}}

\newcommand{\Bin}{\textup{Bin}}
\newcommand{\Aut}{\textup{Aut}}

\newcommand{\tpl}[2]{\mathbf{#1}^{#2}}

\newcommand{\ang}{\measuredangle}

\DeclareMathOperator{\ex}{ex}

 
\title{The chromatic thresholds of graphs}

  \author[P. Allen]{Peter Allen}
  \author[J. B\"ottcher]{Julia B\"ottcher}
  \author[S. Griffiths]{Simon Griffiths}
  \author[Y. Kohayakawa]{Yoshiharu Kohayakawa}
  \author[R. Morris]{Robert Morris}
 \address{
    Peter Allen, Julia B\"ottcher, Yoshiharu Kohayakawa \hfill\break
    Instituto de Matem\'atica e Estat\'{\i}stica, Universidade de
    S\~ao Paulo, Rua do Mat\~ao 1010, 05508--090~S\~ao Paulo, Brasil.
  }
  \email{allen|julia|yoshi@ime.usp.br}
  \address{
    Robert Morris, Simon Griffiths \hfill\break
   IMPA, Estrada Dona Castorina 110, Jardim Bot\^anico, Rio de Janeiro, RJ, Brasil
 }
 \email{rob|sgriff@impa.br}

\thanks{\emph{2010 Mathematics Subject Classification.} Primary 05C35; Secondary 05C15.}
\keywords{Chromatic threshold, minimum degree, graph colouring}
\thanks{
    PA was partially supported by FAPESP (Proc.~2010/09555-7);
    JB by FAPESP (Proc.~2009/17831-7);
    SG by CNPq (Proc.~500016/2010-2);
    YK by CNPq (Proc.~308509/2007-2);
    RM by a CNPq bolsa de Produtividade em Pesquisa.
    This research was supported by CNPq (Proc.~484154/2010-9).
    The authors are
    grateful to NUMEC/USP, N\'ucleo de Modelagem Estoc\'astica e Complexidade
    of the University of S\~{a}o Paulo, and Project MaCLinC/USP, for supporting
    this research. }

\date{\today}


\begin{document}

\begin{abstract}
  The chromatic threshold $\delta_\chi(H)$ of a graph~$H$ is the infimum
  of $d > 0$ such that there exists $C = C(H,d)$ for which
  every $H$-free graph~$G$ with minimum degree at least $d |G|$
  satisfies $\chi(G)\le C$. We prove that
  \[\delta_\chi(H) \, \in \, \left\{ \frac{r-3}{r-2}, \frac{2r-5}{2r-3}, \frac{r-2}{r-1} \right\}\]
  for every graph~$H$ with~$\chi(H)=r\ge 3$. We moreover characterise the
  graphs~$H$ with a given chromatic threshold, and thus determine
  $\delta_\chi(H)$ for every graph $H$. This answers a question of Erd\H{o}s and
  Simonovits [Discrete Math. 5 (1973), 323--334], and confirms a conjecture of
  {\L}uczak and Thomass\'e [preprint (2010), 18pp].
\end{abstract}

\maketitle

\section{Introduction}\label{sec:results}

Two central problems in Graph Theory involve understanding the structure of
graphs which avoid certain subgraphs, and bounding the chromatic number of
graphs in a given family. For more than sixty years, since Zykov~\cite{Zykov}
and Tutte~\cite{Tutte} first constructed triangle-free graphs with arbitrarily
large chromatic number, the interplay between these two problems has been an important area of study.
The generalisation of Zykov's result, by Erd\H{o}s~\cite{Erd59}, to $H$-free
graphs (for any non-acyclic $H$), was one of the first applications of the
probabilistic method in combinatorics.

In 1973, Erd\H{o}s and Simonovits~\cite{ES} asked whether such constructions are
still possible if one insists that the graph should have large minimum degree.
As a way of investigating their problem, they implicitly
 defined what is now known as the \emph{chromatic threshold} of a graph $H$ as
 follows (see~\cite[Section~4]{ES}):
\begin{multline*}
 \delta_\chi(H) \; := \; \inf \Big\{ d \,:\, \exists\, C = C(H,d) \text{
 such that if } G \textup{ is a graph on $n$ vertices, } \\ \text{ with }
 \delta(G) \ge d n \text{ and } H \not\subset G, \textup{ then } \chi(G) \le C\Big\}.
\end{multline*}
That is, for $d < \delta_\chi(H)$ the chromatic number of $H$-free graphs
with minimum degree $d n$ may be arbitrarily large, while for $d >
\delta_\chi(H)$ it is necessarily bounded. In this paper we shall determine
$\delta_\chi(H)$ for every graph $H$, and thus completely solve the problem of Erd\H{o}s and Simonovits.

The chromatic threshold has been most extensively investigated for the triangle
$H = K_3$, where in fact much more is known. Erd\H{o}s and Simonovits~\cite{ES}
conjectured that $\delta_\chi(K_3) = \frac{1}{3}$, which was proven in 2002 by
Thomassen~\cite{Thomassen02}, and that moreover if $G$ is triangle-free and
$\delta(G) > n/3$ then $\chi(G) \le 3$. This stronger conjecture was disproved
by H\"aggkvist~\cite{Hagg}, who found a $(10n/29)$-regular graph with chromatic
number four. However, Brandt and Thomass\'e~\cite{BraTho} recently showed that
the conjecture holds with $\chi(G)\le 3$ replaced by $\chi(G)\le 4$. Hence the
situation is now well-understood for triangle-free graphs $G$
(see~\cite{AES,Bra,BraTho,CJK,Hagg,Jin,LT}), and can be summarised as follows:
\begin{center}
\begin{tabular}{c|c|c|c|c}
$\delta(G) >$ \, & $2n/5$ & $10n/29$ & $n/3$ & $(1/3 - \eps)n$  \\[+0.6ex] \hline 
&&&& \\[-2.1ex]
$\chi(G) \le$ \, & \; 2 \; & \; 3 \; & \; 4 \; & \; $\infty$ 
\end{tabular}\\\
\end{center}
For bipartite graphs $H$, it follows trivially from the
K\"{o}v\'ari-S\'os-Tur\'an Theorem~\cite{KST} that $\delta_\chi(H) = 0$, and for
larger cliques, Goddard and Lyle~\cite{GodLyl} determined the chromatic
threshold, proving that $\delta_\chi(K_r) = \frac{2r-5}{2r-3}$ for every $r \ge
3$. Erd\H{o}s and Simonovits also conjectured that $\delta_\chi(C_5) = 0$, which
was proven (and generalised to all odd cycles) by Thomassen~\cite{Thomassen07}.

For graphs other than cliques and odd cycles, very little was known until
the recent work of Lyle~\cite{Lyle10} and the breakthrough of {\L}uczak and
Thomass\'e~\cite{LT} who introduced a new technique 
which allows the study of $\delta_\chi(H)$ for more general classes of graphs. 
In order to motivate their results, let us
summarise what was known previously for $3$-chromatic graphs~$H$. We have
seen that there are such graphs with chromatic threshold $0$ (the odd
cycle~$C_5$), and chromatic threshold $\frac{1}{3}$ (the triangle). A
folklore result is that there are also $3$-chromatic graphs with
chromatic threshold $\frac{1}{2}$, such as the \emph{octahedron}
$K_{2,2,2}$. Indeed, given a graph~$H$ with $\chi(H)=r\ge 3$, the
\emph{decomposition family} $\cM(H)$ of $H$ is the set of bipartite graphs
which are obtained from $H$ by deleting $r-2$ colour classes in some
$r$-colouring of $H$. Observe that $K_{2,2,2}$ has the property that its
decomposition family contains no forests (in fact,
$\cM(K_{2,2,2})=\{C_4\}$). It is not difficult to show
that whenever there are no forests in $\cM(H)$, the graph $H$
has chromatic threshold $\frac{1}{2}$ (see Proposition~\ref{noforest}).

Thus it remains to consider those $3$-chromatic graphs whose decomposition
family does contain a forest; in other words, graphs which admit a partition into a
forest and an independent set (such as all odd cycles). Lyle~\cite{Lyle10}
proved that these graphs have chromatic threshold strictly smaller than
$\frac{1}{2}$.  In fact, as we shall show, they have chromatic threshold at
most $\frac{1}{3}$ (see Theorem~\ref{mainthm}).
{\L}uczak and Thomass\'e~\cite{LT} described a large sub-family of these
graphs with chromatic threshold strictly smaller than~$\frac13$. More
precisely they considered triangle-free graphs which admit a partition
into a (not necessarily perfect) matching and an independent set; they called a graph (such as~$C_5$) \emph{near-bipartite} if
it is of this form, and proved that if $H$ is near-bipartite then~$\delta_\chi(H)=0$. 

However, {\L}uczak and Thomass\'e did not believe that
the near-bipartite graphs are the only graphs with chromatic threshold zero. Generalising
near-bipartite graphs, they defined~$H$ to be \emph{near-acyclic} if
$\chi(H)=3$ and~$H$ admits a partition into a forest~$F$ and an independent
set~$S$ such that every odd cycle of~$H$ meets~$S$ in at least two
vertices. Equivalently, for each tree~$T$ in~$F$ with colour classes $V_1(T)$ and
$V_2(T)$, there is no vertex of~$S$ with neighbours in both $V_1(T)$ and
$V_2(T)$ (see also Figure~\ref{fig:near-acyclic}). Observe that the
near-bipartite graphs are precisely the near-acyclic graphs in which every
tree is a single edge or vertex. The first graph in Figure~\ref{fig:examples} is
near-acyclic (as illustrated by the highlighted forest), but has no matching
in its decomposition family and thus is not near-bipartite.

\begin{figure}[t]
\psfrag{T1}{$T_1$}\psfrag{T2}{$T_2$}\psfrag{S}{$S$}
\includegraphics[width=5cm]{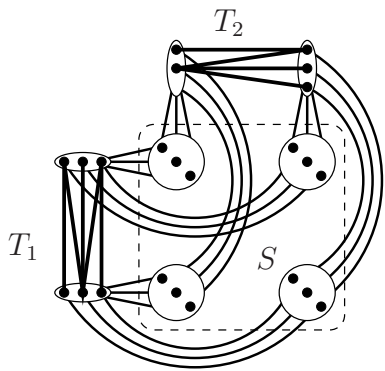} 
\caption{An illustration of
near-acyclic graphs with partition into an independent set~$S$ and a forest~$F$
consisting of two trees~$T_1$ and~$T_2$.}
\label{fig:near-acyclic}
\end{figure}

Lyle~\cite{Lyle10} proved that~$\delta_\chi(H)=0$ for a sub-family of near-acyclic
graphs~$H$ which are not necessarily near-bipartite, and
{\L}uczak and Thomass\'e gave a construction (see Section~\ref{borsuksec})
showing that every graph which is \emph{not} near-acyclic has chromatic
threshold at least $\frac{1}{3}$. They made the following conjecture.

\begin{conj}[\L uczak and Thomass\'e~\cite{LT}]\label{LTconj}
Let $H$ be a graph with $\chi(H) = 3$. Then $\delta_\chi(H) = 0$ if and only if $H$ is near-acyclic.
\end{conj}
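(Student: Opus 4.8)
The plan is to establish the two implications of Conjecture~\ref{LTconj} separately. The ``only if'' implication is already essentially in hand: if $H$ is \emph{not} near-acyclic then, as recorded in Section~\ref{borsuksec}, the construction of {\L}uczak and Thomass\'e~\cite{LT} produces, for every~$n$, an $H$-free graph on~$n$ vertices with minimum degree $\bigl(\tfrac13-o(1)\bigr)n$ and unbounded chromatic number, so that in fact $\delta_\chi(H)\ge\tfrac13>0$. Hence the content of the conjecture is the converse: \emph{if $H$ is near-acyclic then $\delta_\chi(H)=0$}. Concretely, fixing a near-acyclic~$H$ with partition $V(H)=F\cup S$ as in the definition and fixing $\eps>0$, we must show that every $H$-free graph~$G$ on~$n$ vertices with $\delta(G)\ge\eps n$ satisfies $\chi(G)\le C$ for a constant $C=C(H,\eps)$; we may assume~$n$ is large. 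The known case is when~$H$ is near-bipartite, i.e.\ when every tree of the forest $H[F]$ is a single vertex or edge, which is the theorem of {\L}uczak and Thomass\'e~\cite{LT}, so one natural route is to reduce the general case to that one by an induction on the complexity of $H[F]$.

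The first step I would take is to replace~$G$ by a bounded ``model''. Applying the minimum-degree form of Szemer\'edi's Regularity Lemma with parameters chosen small in terms of~$\eps$ and~$H$, we obtain a reduced graph~$R$ on a bounded number~$k$ of clusters, with $\delta(R)\ge\eps k/4$, whose edges are $\eps'$-regular pairs of density at least~$d$. Since $\chi(H)=3$, any triangle of~$R$ would blow up, by the Counting Lemma, into a copy of~$H$ inside~$G$ (the clusters being far larger than $|V(H)|$); hence~$R$ is triangle-free, and, using that the decomposition family $\cM(H)$ contains the forest $H[F]$, the structure carried by individual regular pairs and by the edges inside clusters is likewise highly constrained. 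Let $G_1$ be~$G$ with all edges inside clusters, in irregular pairs, and in low-density pairs deleted --- the \emph{skeleton} ---, and $G_2$ the graph of these deleted \emph{exceptional} edges. One checks that $G_1$ is homomorphic to~$R$, so $\chi(G_1)\le k$, and that every triangle of~$G$ contains an edge of~$G_2$. The difficulty is that $G_2$, although sparse --- it has $O\bigl((\eps'+d+k^{-1})n^2\bigr)$ edges --- could a priori have unbounded chromatic number, so the crux is to bound $\chi(G_2)$ in terms of $H$ and~$\eps$ alone.

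To control~$G_2$ I would invoke the Vapnik--Chervonenkis dimension method of {\L}uczak and Thomass\'e~\cite{LT}, which is what allows one to go below~$\tfrac13$ at all. The three ingredients I would aim for are: (i) $H$-freeness of~$G$ forces the relevant neighbourhood set systems to have bounded VC dimension; (ii) a Haussler-type packing argument then partitions $V(G)$ into a bounded number of classes on which these neighbourhoods are essentially constant, so that, after deleting a further sparse and \emph{structurally controlled} set of edges, what remains is homomorphic to a bounded triangle-free graph; and (iii), crucially, the deleted edges cannot, relative to this homomorphism and the bipartitions of the trees of $H[F]$, form the configurations that would be expensive to colour --- such a configuration (in particular a short odd cycle of exceptional edges suitably attached to the triangle-free skeleton) would let us embed a copy of~$H$ into~$G$, using precisely that~$H$ near-acyclic has no odd cycle meeting~$S$ in fewer than two vertices, equivalently that no vertex of~$S$ sees both colour classes of a tree of $H[F]$. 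One concludes that the exceptional edges are, locally, a forest, that $\chi(G_2)$ is bounded, and hence that $\chi(G)\le\chi(G_1)\cdot\chi(G_2)\le C(H,\eps)$ by superposing the two colourings.

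I expect step~(iii) to be the main obstacle. Deciding exactly which ``error'' configurations must be excluded, and checking that each one really does produce a copy of the \emph{specific} near-acyclic graph~$H$ once glued onto the regular, homogeneous skeleton, will require a delicate embedding argument which genuinely uses the hypothesis that every odd cycle of~$H$ meets~$S$ in at least two vertices: this is precisely the condition separating the near-acyclic graphs (threshold~$0$) from the $3$-chromatic graphs with a forest in their decomposition family which are not near-acyclic (threshold~$\tfrac13$, by Section~\ref{borsuksec} and Theorem~\ref{mainthm}) and from the $3$-chromatic graphs with no forest in their decomposition family (threshold~$\tfrac12$, by Proposition~\ref{noforest}). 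A secondary, more routine, difficulty is to keep every parameter --- the regularity constants, the VC dimension, the number of neighbourhood classes, the final number of colours --- bounded in terms of $H$ and~$\eps$ only and uniformly in~$n$, so that the constant~$C$ in the definition of $\delta_\chi(H)$ genuinely exists.
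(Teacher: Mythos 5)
Your treatment of the ``only if'' direction is fine and matches the paper: Theorem~\ref{LTborsuk} (the Borsuk--Hajnal construction) gives $\delta_\chi(H)\ge\tfrac13$ for every $3$-chromatic $H$ that is not near-acyclic. The problem is the converse, and there your proposal has a genuine gap rather than a proof. The skeleton/exceptional-edge decomposition via the Regularity Lemma does not reduce the problem: the graph $G_2$ of intra-cluster, irregular and low-density edges is an $H$-free graph on which no minimum degree condition survives, and sparseness alone cannot bound its chromatic number (high-girth Erd\H{o}s graphs are sparse, $H$-free and have unbounded chromatic number; cf.\ Proposition~\ref{o(n)}). So ``the crux is to bound $\chi(G_2)$'' is essentially the original problem restated, and everything then hinges on your ingredients (i)--(iii), which are not established and, as stated, do not reflect how the {\L}uczak--Thomass\'e machinery actually works. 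In particular, $H$-freeness does \emph{not} force a bounded VC dimension of the neighbourhood set system with a Haussler-type packing into classes of ``essentially constant'' neighbourhoods; the paired VC-dimension (booster-tree) argument yields only a dichotomy: either one can iteratively split off $C$-degenerate pieces of $X$ via boosters, or the pair $(X,Y)$ is rich in a precise combinatorial configuration. Finally, step (iii) --- the ``delicate embedding argument'' that would turn forbidden error configurations into copies of the specific near-acyclic $H$ --- is exactly the heart of the matter, and you acknowledge it as an obstacle rather than supplying it; without it there is no proof.

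What the paper does instead is structurally different and avoids the Regularity Lemma entirely for this case (Theorem~\ref{nearacyclic}). The missing key idea in your plan is the universal object: by Observation~\ref{acy=Zyk}, $H$ is near-acyclic if and only if $H\subset Z_\ell^{3,t}(T_1,\ldots,T_\ell)$ for suitable trees $T_i$ and $t$, i.e.\ every near-acyclic graph sits inside a modified Zykov graph. One then takes a maximal cut $(X,Y)$ of $G$ (so every $x\in X$ has $|N(x)\cap Y|\ge\gamma|Y|$ when $\delta(G)\ge2\gamma n$), and applies Proposition~\ref{lem:VC}: either $\chi(G[X])$ is bounded by a constant $C'(H,\gamma)$, whence $\chi(G)\le 2C'$, or $(X,Y)$ is $(C,\alpha)$-rich in copies of $Z_\ell^{3,t}$. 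In the latter case Proposition~\ref{lem:rich-H} (a counting/induction argument over the nested edge sets $D(\tpl{e}{q})$, using Fact~\ref{prop:forest} to embed the trees $T_1,\ldots,T_\ell$ into edge sets of large average degree) produces $Z_\ell^{3,t}(T_1,\ldots,T_\ell)\subset G$ and hence $H\subset G$, a contradiction. This is precisely the embedding step your plan leaves open, and it is carried out against the Zykov template rather than against a regularity skeleton; the hypothesis that every odd cycle of $H$ meets $S$ twice enters only through Observation~\ref{acy=Zyk}, not through any case analysis of ``error configurations''.
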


We shall prove Conjecture~\ref{LTconj}, and moreover
determine~$\delta_\chi(H)$ for \emph{every} graph~$H$. In this theorem, we use the following generalisation of near-acyclic
graphs. We call a graph $H$ \emph{$r$-near-acyclic} if $\chi(H) = r\ge 3$, and there exist $r - 3$
independent sets in $H$ whose removal yields a near-acyclic graph.
Note in particular that if $H$ is $r$-near-acyclic, then there is a
forest in $\cM(H)$. 
Our main theorem is as follows.

\begin{theorem}\label{mainthm}
Let~$H$ be a graph with $\chi(H) = r \ge 3$. Then 
\[\delta_\chi(H) \, \in \,
\left\{ \frac{r-3}{r-2}, \, \frac{2r-5}{2r-3}, \, \frac{r-2}{r-1} \right\}\,.\]
Moreover, $\delta_\chi(H) \neq \frac{r-2}{r-1}$ if and only if $H$ has a forest
in its decomposition family, and $\delta_\chi(H) = \frac{r-3}{r-2}$ if and only
if $H$ is $r$-near-acyclic.
\end{theorem}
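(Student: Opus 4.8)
The plan is to deduce Theorem~\ref{mainthm} from six statements. Write $d_1=\frac{r-3}{r-2}$, $d_2=\frac{2r-5}{2r-3}$ and $d_3=\frac{r-2}{r-1}$; a short computation gives $d_1<d_2<d_3$ for every $r\ge 3$. Recalling that every $r$-near-acyclic graph has a forest in its decomposition family, it suffices to prove: (i)~if $\cM(H)$ contains no forest then $\delta_\chi(H)\ge d_3$; (ii)~$\delta_\chi(H)\ge d_1$ always; (iii)~if $H$ is not $r$-near-acyclic then $\delta_\chi(H)\ge d_2$; (iv)~$\delta_\chi(H)\le d_3$ always; (v)~if $\cM(H)$ contains a forest then $\delta_\chi(H)\le d_2$; and (vi)~if $H$ is $r$-near-acyclic then $\delta_\chi(H)\le d_1$. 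Granting these, the three possibilities for $H$ --- no forest in $\cM(H)$; a forest in $\cM(H)$ but $H$ not $r$-near-acyclic; $H$ itself $r$-near-acyclic --- force $\delta_\chi(H)$ to be $d_3$, $d_2$ and $d_1$ respectively, which is precisely the theorem.

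For the lower bounds (i) and (ii) I would use Erd\H{o}s's theorem that for every $k$ there is a graph of girth greater than $|V(H)|$ with chromatic number at least $k$. For (i), which generalises Proposition~\ref{noforest}, join $r-2$ independent sets, each of size $\tfrac{n}{r-1}$, completely to such a graph $G_1$ on $\tfrac{n}{r-1}$ vertices; this has minimum degree $d_3 n$ and unbounded chromatic number, and is $H$-free because in any copy of $H$ the vertices not placed in the $r-2$ independent parts induce a subgraph of $H$ of chromatic number at least $2$, which cannot be a forest (a forest would belong to $\cM(H)$) and so contains a cycle of length at most $|V(H)|$, impossible in $G_1$. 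For (ii) one joins $r-3$ independent sets of size $\tfrac{n}{r-2}$ to a high-girth, high-chromatic graph on $\tfrac{n}{r-2}$ vertices; now the leftover subgraph of $H$ has chromatic number at least $3$, hence a short odd cycle, again impossible. The universal upper bound (iv) follows from the minimum-degree form of Szemer\'edi's Regularity Lemma and the Counting Lemma: if $\delta(G)\ge(d_3+\eta)n$ with $n$ large, the reduced graph has minimum degree exceeding $\frac{r-2}{r-1}$ times its order and hence contains $K_r$, whose $r$ dense regular clusters blow up to a copy of $H$ as $\chi(H)=r$; so for large $n$ no such $H$-free graph exists and $\chi(G)$ is bounded.

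The lower bound (iii) is the construction of {\L}uczak and Thomass\'e from Section~\ref{borsuksec} (and, for $H=K_r$, of Goddard and Lyle): combine $r-3$ complete classes with a high-chromatic Borsuk-type graph all of whose short odd closed walks are ``locally bipartite'', with balanced part sizes --- the Borsuk part of relative size $\tfrac{3}{2r-3}$, each complete class of relative size $\tfrac{2}{2r-3}$ --- so that the minimum degree equals $d_2 n$. Every copy of $H$ in this graph is forced to display a near-acyclic structure, and the hypothesis that $H$ is \emph{not} $r$-near-acyclic is precisely what makes this impossible.

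The heart of the argument is (v) and (vi). For these I would apply the minimum-degree form of the Regularity Lemma to an $H$-free $G$ with $\delta(G)\ge(d_2+\eta)n$, respectively $(d_1+\eta)n$, and pass to the reduced graph $R$; using the minimum degree of $R$ and the fact that $G$ --- hence $R$ in a robust, blow-up sense --- is $H$-free, one pins down the structure of $R$ as essentially a blow-up of a small graph from a restricted family, so that the ``dense'' part of $G$ it governs has bounded chromatic number. The remaining ``sparse'' part is controlled by the VC-dimension method of {\L}uczak and Thomass\'e, generalised from near-bipartite $H$: if within a cell the system of neighbourhoods were too rich, it would shatter a set large enough to realise a forest $F\in\cM(H)$ together with the prescribed adjacencies to the remaining vertices of $H$, producing a copy of $H$; the (near-)acyclic structure of $H$ is exactly what makes these adjacency constraints satisfiable, since each tree of $F$ is grown by iterated common-neighbourhood expansion (fuelled by the minimum degree) while the independent set of the near-acyclic decomposition is placed through the shattering, its neighbourhoods meeting only one side of each tree's bipartition. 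The gain from $d_2$ to $d_1$ in (vi) reflects that a near-acyclic $H$ never demands the extra room the Borsuk construction of (iii) exploits. The main obstacle is this pair (v)--(vi): running the VC-dimension and embedding machinery for an \emph{arbitrary} $H$ with a forest in its decomposition family rather than a near-bipartite one, while simultaneously extracting and using the blow-up structure of the reduced graph and managing the case analysis over the possible shapes of $\cM(H)$ and of the near-acyclic partition --- including graphs $H$ with many distinct $r$-colourings. A secondary difficulty is verifying $H$-freeness of the Borsuk-type construction in (iii) for every $H$ that is not $r$-near-acyclic.
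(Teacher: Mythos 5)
Your top-level reduction is exactly the paper's: your (i) is Proposition~\ref{noforest}, (ii) is Proposition~\ref{lower}, (iii) is Theorem~\ref{thm:borsuk}, (iv) is the Erd\H{o}s--Stone observation $\delta_\chi(H)\le\pi(H)$, (v) is Theorem~\ref{thm:forest} and (vi) is Theorem~\ref{r-acyclic}; moreover your constructions and $H$-freeness arguments for (i) and (ii) are precisely those used there, and the deduction of the theorem from the six statements is sound. The problem is that the proposal does not actually prove the two upper bounds (v) and (vi) that carry essentially all of the weight (you say so yourself), and the sketches you offer for them do not describe arguments that would work as stated.

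Concretely: for (v) the paper neither ``pins down the structure of $R$ as a blow-up of a small graph'' nor uses the VC-dimension method; instead it partitions $V(G)$ into classes $X_I$ according to which clusters a vertex sees with density at least $d$, and then either (when $|I|\ge\frac{2r-4}{2r-3}k$) finds $K_{r-1}\subset R[I]$, hence many copies of $K_{r-1}(|H|)$ inside every $N(x)$, so that $|X_I|$ is bounded by pigeonhole, or (when $|I|$ is small) shows every edge of $G[X_I]$ lies in $\Omega(n^{r-2})$ copies of $K_r$ and invokes a counting version of Erd\H{o}s's $K_s(t)$ theorem to conclude $G[X_I]$ is $O(1)$-degenerate. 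For (vi) your sketch gestures at shattering and at growing trees via common neighbourhoods, but the machinery that makes this precise is missing: the modified Zykov graphs $Z_\ell^{r,t}(T_1,\ldots,T_\ell)$ and Observation~\ref{acy=Zyk}, the quantified notion of a $(C,\alpha)$-rich pair together with its embedding consequence (Proposition~\ref{lem:rich-H}), the booster-tree dichotomy in which the exceptional parts are $C$-degenerate rather than independent (Proposition~\ref{lem:VC}), and, above all, the lift from $3$-chromatic richness to the $r$-chromatic target (Proposition~\ref{3-to-r-rich} with Lemma~\ref{countST}). That lift is where the value $\frac{r-3}{r-2}$ actually comes from: with $\delta(G)\ge\big(\frac{r-3}{r-2}+3\gamma\big)n$ one attaches to each class of a refined partition a cluster $Y$ seen with density $\gamma$ and a clique $Z_1,\ldots,Z_{r-3}$ of clusters each seen with density $\frac12+\gamma$, and converts good Zykov structures over $Y$ into good ones over $Y\cup Z_1\cup\cdots\cup Z_{r-3}$. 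Your one-line explanation of the gain from $d_2$ to $d_1$ (that a near-acyclic $H$ ``never demands the extra room the Borsuk construction exploits'') is not an argument. Likewise, for (iii) you leave the $H$-freeness of the Borsuk--Hajnal graph unverified; in the paper this is the girth/angle computation of Claim~\ref{cl:borsuk:path} together with the randomised choice of $W$.
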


\begin{figure}[t]
\includegraphics[width=4cm]{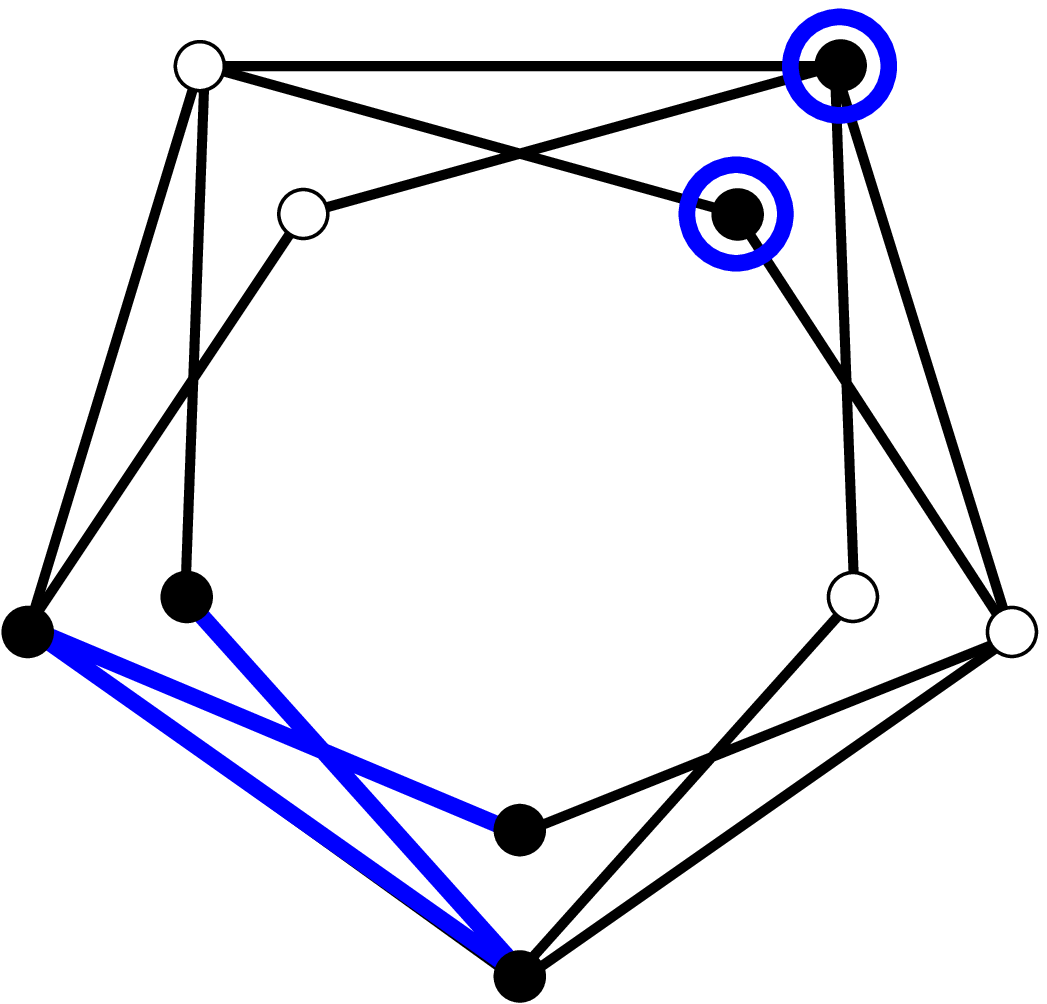}\qquad
\includegraphics[width=4cm]{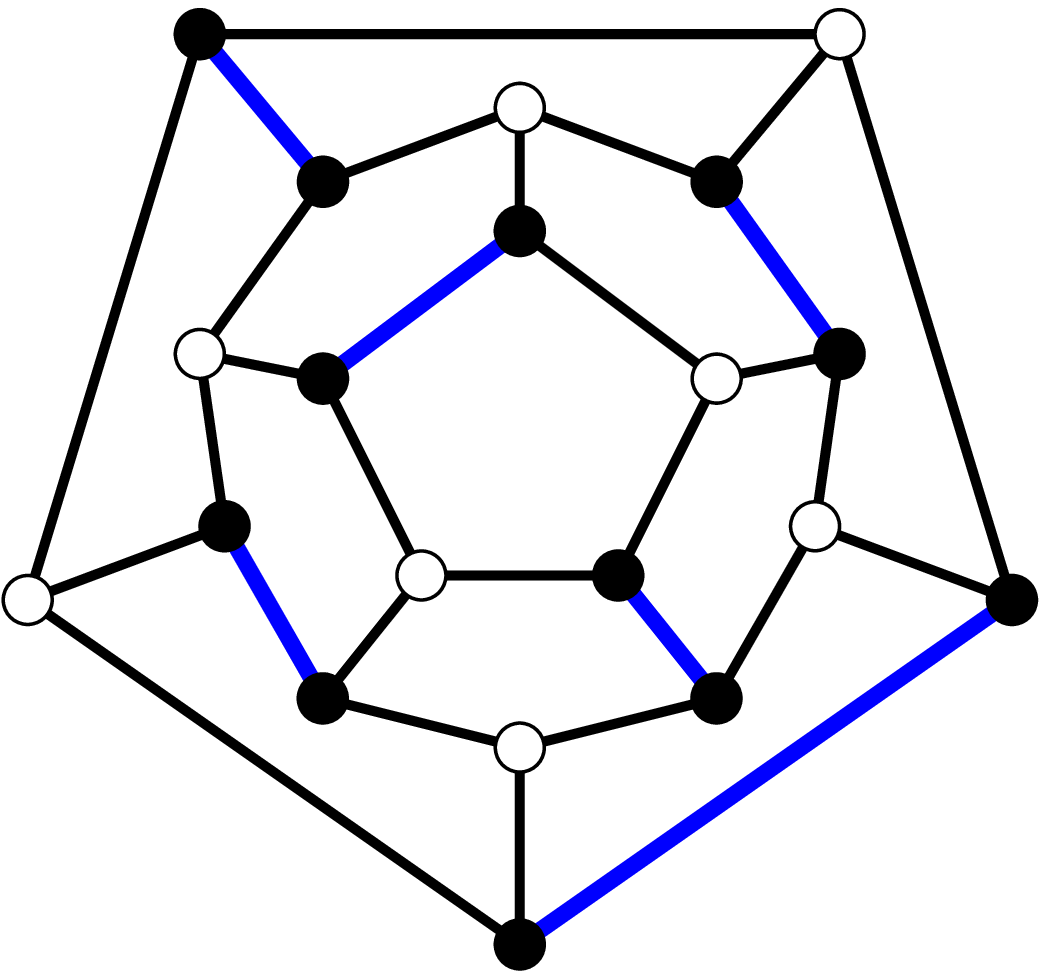}\qquad
\includegraphics[width=4cm]{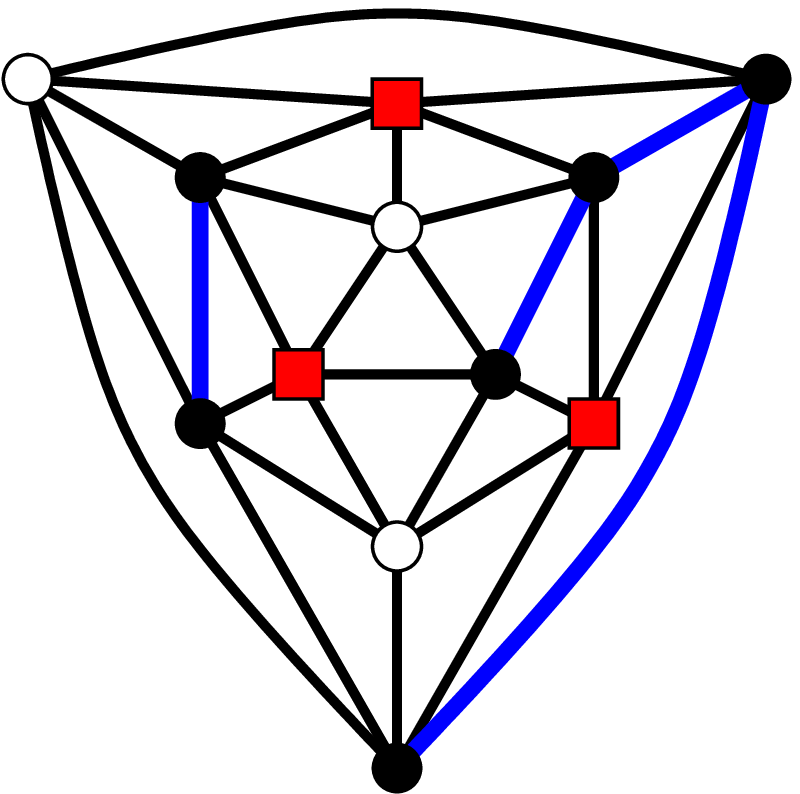}
\caption{A near-acyclic graph that is not near-bipartite, the dodecahedron, and
the icosahedron.}
\label{fig:examples}
\end{figure}

For example, the \emph{dodecahedron} is $3$-chromatic and near-bipartite,
hence it has chromatic threshold~$0$ (Figure~\ref{fig:examples} shows the
dodecahedron together with a corresponding matching). The \emph{icosahedron} on
the other hand has chromatic threshold $\frac35$ because it is four-chromatic
and has a forest in its decomposition family (a partition of the icosahedron
into this forest and two independent sets is also displayed in
Figure~\ref{fig:examples}), but is not $4$-near-acyclic.

For easier reference, given $H$ with $\chi(H)=r$, we define
\[\theta(H) := \frac{r-3}{r-2}\,,\quad\lambda(H) := \frac{2r - 5}{2r-3}\quad\text{and}\quad\pi(H) := \frac{r-2}{r-1}\,.\]
Observe that $\pi(H)$ is precisely the Tur\'an density of $H$, and therefore the Erd\H{o}s-Stone Theorem~\cite{ErdosStone1946} yields $\delta_\chi(H)\le\pi(H)$ for all $H$. Furthermore, the constructions giving the lower bounds in Theorem~\ref{mainthm}
are straightforward extensions of those given in~\cite{LT,Lyle10}. It follows that our main challenge is to prove that $\delta_\chi(H) \le \lambda(H)$ when $\cM(H)$ contains a forest, and that $\delta_\chi(H) \le \theta(H)$ when $H$ is $r$-near-acyclic. 

\smallskip

The recent results of both Lyle~\cite{Lyle10} and {\L}uczak and Thomass\'e~\cite{LT} contain important new techniques, which we re-use and extend here. Most significantly, {\L}uczak and Thomass\'e~\cite{LT} introduced a concept which they termed \emph{paired VC-dimension}, which is based on the classical Vapnik-\v{C}ervonenkis dimension of a set-system~\cite{VC}. Our proof of Conjecture~\ref{LTconj} relies on an extension of this technique (see Section~\ref{VCsec}), together with a new embedding lemma (see Section~\ref{Zykovsec}) which allows us to find a copy of $H$ in sufficiently many `well-structured' copies of the `Zykov graph', which is a universal near-bipartite graph. 

Lyle~\cite{Lyle10} introduced a novel graph partitioning method based on the celebrated Szemer\'edi Regularity Lemma. We shall use his partition in Section~\ref{Lylesec}, together with averaging arguments similar to those in~\cite{Allen10}, to prove that $\delta_\chi(H)\le\lambda(H)$ for any graph $H$ such that $\cM(H)$ contains a forest. In Section~\ref{proofsec} we shall combine and extend
both techniques in order to generalise Conjecture~\ref{LTconj} to arbitrary $r \ge 3$. 

\smallskip

\paragraph{\bf Organisation}
In Section~\ref{toolsec} we state the Regularity Lemma in the form in which we shall use it, together with some
auxiliary tools, and provide some notes on notation. In Section~\ref{Lylesec} we
prove that $\delta_\chi(H)\le\lambda(H)$ for any graph $H$ such that $\cM(H)$
contains a forest. In addition we give a construction which shows that if
$\cM(H)$ does not contain a forest, then $\delta_\chi(H)\ge\pi(H)$. In
Section~\ref{borsuksec} we provide a construction (using the Borsuk-Ulam
Theorem) which shows that for graphs~$H$ which are not $r$-near-acyclic, we have
$\delta_\chi(H)\ge\lambda(H)$. In Section~\ref{Zykovsec} we introduce a generalisation of
 the class of Zykov graphs, a class of universal near-bipartite graphs which were used in~\cite{LT}. We show that 
for every near-acyclic graph $H$, if $G$ contains a suitably well-structured
collection of Zykov graphs, then $G$ contains $H$. Complementing this,
in Section~\ref{VCsec} we refine {\L}uczak and Thomass\'e's paired VC-dimension
argument to show that every graph with linear minimum degree and sufficiently
large chromatic number indeed contains such a well-structured collection of
Zykov graphs. Completing the proof of Theorem~\ref{mainthm}, in
Section~\ref{proofsec} we give a construction which shows that for any
$r$-chromatic graph $H$ we have $\delta_\chi(H)\ge\theta(H)$, and combine the
results of Sections~\ref{Zykovsec} and~\ref{VCsec} with the Regularity Lemma
in order to show that for $r$-near-acyclic graphs $H$,
we have $\delta_\chi(H)\le\theta(H)$. Finally, in Section~\ref{probsec} we
conclude with a collection of open problems.

\section{Tools and the Regularity Lemma}\label{toolsec}

In this section we shall state some of the tools used in the proof of
Theorem~\ref{mainthm}. In particular, we shall recall the Szemer\'edi
Regularity Lemma, which is one of the most powerful and important results
in Graph Theory. Introduced in the 1970s by Szemer\'edi~\cite{SzRL} in
order to prove that sets of positive density in the integers contain
arbitrarily long arithmetic progressions (a result known as Szemer\'edi's
Theorem~\cite{Sz73}), it says (roughly) that \emph{any} graph can be approximated well
by a bounded number of `quasi-random' graphs. The lemma has turned out to
have an enormous number of applications, and many important extensions and
variations have been proved (see for
example~\cite{Gowers,KRSS,LovSze07,RodSch10} and the references therein).
The reader who is unfamiliar with the Regularity Lemma is encouraged to see the excellent surveys~\cite{KS93,KSSS}.

We begin by stating the Regularity Lemma in the form in which we shall use
it. Let $(A,B)$ be a pair of subsets of vertices
of a graph~$G$.
We write $d(A,B) = \frac{e(A,B)}{|A||B|}$,
and call $d(A,B)$ the \emph{density} of the pair $(A,B)$.
(Here $e(A,B)$ denotes the number of edges with one endpoint
in $A$ and the other in $B$.)
For each $\eps > 0$, we say that $(A,B)$ is \emph{$\eps$-regular} if
$| d(A,B) - d(X,Y) | < \eps$
for every $X \subset A$ and $Y \subset B$ with $|X| \ge \eps |A|$ and $|Y|
\ge \eps |B|$. 

A partition $V_0 \cup V_1 \cup \ldots \cup V_k$ of $V(G)$ is said to be an
\emph{$\eps$-regular partition} (or sometimes a Szemer\'edi partition of $G$ for
$\eps$) if $|V_0| \le \eps n$, $|V_1| = \ldots = |V_k|$, and all but at most
$\eps k^2$ of the pairs $(V_i,V_j)$ are $\eps$-regular. We will often refer
to the partition classes $V_1,\ldots,V_k$ as the \emph{clusters} of the
regular partition. In its simplest form, the Regularity Lemma is as follows.

\begin{SzRL}
  For every $\eps > 0$ and every $k_0 \in \NATS$, there exists a constant
  $k_1 = k_1(k_0,\eps)$ such that the following holds. Every graph $G$ on
  at least $k_1$ vertices has an $\eps$-regular partition into $k$ parts,
  for some $k_0 \le k \le k_1$.
\end{SzRL}

We shall in fact use a slight extension of the statement above, which follows easily from~\cite[Theorem~1.10]{KS93} (a proof can be found in, e.g.,~\cite[Proposition 9]{KOTPlanar}). Given $0 < d < 1$ and a pair $(A,B)$ of sets of vertices in a graph $G$, we say that $(A,B)$ is \emph{$(\eps,d)$-regular} if it is $\eps$-regular and has density at least $d$. 

Given an $\eps$-regular partition $V_0 \cup V_1\cup \dots \cup V_k$ of $V(G)$
and $0<d<1$, we define a graph~$R$, called the \emph{reduced graph} of the
partition, as follows: $V(R) =[k]= \{1,\ldots,k\}$ and $ij \in E(R)$ if and only
if $(V_i,V_j)$ is an $(\eps,d)$-regular pair. We shall occasionally omit
the partition, and simply say that $G$ has \emph{$(\eps,d)$-reduced
  graph}~$R$.

\begin{SzRLdeg}
Let $0 < \eps < d < \delta < 1$, and let $k_0 \in \NATS$. There exists a constant $k_1 = k_1(k_0,\eps,\delta,d)$ such that the following holds. Every graph $G$ on $n > k_1$ vertices, with minimum degree $\delta(G) \ge \delta n$, has an $(\eps,d)$-reduced graph~$R$ on~$k$ vertices, with $k_0 \le k \le k_1$ and $\delta(R) \ge \big( \delta - d - \eps \big)k$.
\end{SzRLdeg}

Thus the reduced graph~$R$ of~$G$ `inherits' the high minimum degree of~$G$. The main motivation for the definition of $\eps$-regularity is the following so-called `counting lemma' (see~\cite[Theorem~3.1]{KS93}, for example).

\begin{Count}
Let $G$ be a graph with $(\eps,d)$-reduced graph $R$ whose clusters contain
$m$ vertices, and suppose that there is a homomorphism $\phi\colon H\to R$.
Then $G$ contains at least
\[\frac{1}{\big|\Aut(H)\big|}\big(d-\eps|H|\big)^{e(H)}m^{|H|}\] copies of $H$,
each with the property that every vertex $x\in V(H)$ lies in the cluster corresponding to the vertex $\phi(x)$ of $R$.
\end{Count}

Note that since we count unlabelled copies of $H$, it is necessary to correct
for the possibility that two different maps from $H$ to $G$ may yield the same
copy of $H$ (precisely when they differ by some automorphism of $H$). In fact,
such an automorphism of $H$ must also preserve $\phi$, but dividing by the
number of elements of the full automorphism group $\Aut(H)$ provides a lower
bound which is sufficient for our purposes. We state one more useful fact about subpairs of $(\eps,d)$-regular pairs.

\begin{fact}\label{prop:subpair}
Let $(U,W)$ be an $(\eps,d)$-regular pair and $U'\subset U$, $W'\subset W$ satisfy $|U'|\ge \alpha|U|$ and $|W'|\ge\alpha|W|$. Then $(U',W')$ is $(\eps/\alpha,d-\eps)$-regular.
\end{fact}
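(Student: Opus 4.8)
The plan is to read off both conclusions directly from the $\eps$-regularity of the original pair $(U,W)$, using only that the prescribed subsets are not too small together with the triangle inequality; no new idea is needed. First I would dispose of the degenerate case: if $\alpha\le\eps$ then $\eps/\alpha\ge 1$, so no $X\subseteq U'$, $Y\subseteq W'$ can satisfy $|X|\ge(\eps/\alpha)|U'|$ and $|Y|\ge(\eps/\alpha)|W'|$ unless they equal $U'$, $W'$ respectively, and the $(\eps/\alpha)$-regularity requirement is then vacuous. Hence I may assume $\eps<\alpha\le 1$.

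For the density bound, I would note that $|U'|\ge\alpha|U|\ge\eps|U|$ and $|W'|\ge\alpha|W|\ge\eps|W|$, so applying $\eps$-regularity of $(U,W)$ with $X=U'$ and $Y=W'$ gives $|d(U',W')-d(U,W)|<\eps$. Since $(U,W)$ is $(\eps,d)$-regular we have $d(U,W)\ge d$, and therefore $d(U',W')>d-\eps$, as required.

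For the regularity bound, take arbitrary $X\subseteq U'$ and $Y\subseteq W'$ with $|X|\ge(\eps/\alpha)|U'|$ and $|Y|\ge(\eps/\alpha)|W'|$. Then $|X|\ge(\eps/\alpha)|U'|\ge(\eps/\alpha)\cdot\alpha|U|=\eps|U|$ and similarly $|Y|\ge\eps|W|$, so $\eps$-regularity of $(U,W)$ applies to the pair $(X,Y)$ and yields $|d(X,Y)-d(U,W)|<\eps$. Combining this with $|d(U',W')-d(U,W)|<\eps$ from the previous paragraph via the triangle inequality gives $|d(X,Y)-d(U',W')|<2\eps\le\eps/\alpha$, the last inequality using $\alpha\le\tfrac12$. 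Hence $(U',W')$ is $(\eps/\alpha)$-regular.

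I do not expect a genuine obstacle: this is a routine ``slicing'' property of regular pairs. The only point needing a little care is the final step $2\eps\le\eps/\alpha$, which requires $\alpha\le\tfrac12$ (equivalently, that the pair is not being cut down only trivially); without it the triangle-inequality argument above delivers only $\max\{2\eps,\eps/\alpha\}$-regularity. In every application of this fact in the paper $\alpha$ is a small constant, so the distinction is immaterial, and for a fully self-contained statement one may simply read the conclusion as ``$\max\{2\eps,\eps/\alpha\}$-regular'', which is exactly what the two-step argument proves in general.
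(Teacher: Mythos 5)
The paper offers no proof of this Fact --- it is quoted as a standard ``slicing'' property of regular pairs --- so your write-up supplies exactly the routine argument the authors had in mind: apply $\eps$-regularity of $(U,W)$ once to $(U',W')$ to get the density bound $d(U',W')>d-\eps$, and once more to $(X,Y)$, then use the triangle inequality. That part is correct, and you are also right that this argument only yields $\max\{2\eps,\eps/\alpha\}$-regularity, so that the literal $\eps/\alpha$ in the statement needs $\alpha\le\tfrac12$ (indeed the sharper bound is not true in general for $\alpha$ close to $1$); in the paper the Fact is only invoked with $\alpha=d$ and $\alpha=\gamma$, both at most $\tfrac12$ and larger than $\eps$, so the conclusion as used is fine, and your suggested reading $\max\{2\eps,\eps/\alpha\}$ is the honest general statement.

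One small slip: your disposal of the case $\alpha\le\eps$ is not quite right. In the paper's terminology, $(\eps/\alpha,d-\eps)$-regular means $(\eps/\alpha)$-regular \emph{and} of density at least $d-\eps$; when $\eps/\alpha\ge 1$ only the first clause becomes vacuous, while the density clause does not, and it can genuinely fail, since $\eps$-regularity of $(U,W)$ gives no control over subsets of size below $\eps|U|$ (take $U',W'$ spanning no edges). So ``hence I may assume $\eps<\alpha$'' is not a legitimate reduction; rather, the Fact implicitly assumes $\alpha>\eps$ (as it always is in the paper), and it is cleaner to state that hypothesis explicitly than to claim the degenerate case holds vacuously. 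With that proviso, and restricted to $\eps<\alpha\le\tfrac12$, your proof is complete and is the standard one.
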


We shall also use the following straightforward and well-known fact several times.

\begin{fact}\label{prop:forest}
Let~$F$ be a forest and~$G$ be a graph on $n\ge 1$ vertices. If $e(G) \ge |F|n$,
then $F\subset G$.
\end{fact}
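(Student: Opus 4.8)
The plan is to prove Fact~\ref{prop:forest} by a standard greedy embedding, using the fact that a forest has a vertex of degree at most one to order its vertices conveniently. First I would reduce to the connected case: if $F$ has components $F_1,\dots,F_t$, it suffices to embed each $F_i$ greedily into $G$ one after another, deleting already-used vertices as we go; since in total we delete fewer than $|F|$ vertices, and we will see each step only requires enough room, this causes no difficulty. So assume $F$ is a tree and proceed by induction on $|F|$.

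For the inductive step, pick a leaf $v$ of $F$ (if $|F|=1$ this is the whole tree and there is nothing to do since $n\ge 1$), let $u$ be its unique neighbour, and set $F' = F - v$, which is again a tree (or a single vertex) with $|F'| = |F|-1$. The subtlety is that the naive edge-count hypothesis $e(G)\ge |F|n$ is not literally inherited by a subgraph of $G$ on $n-1$ vertices after we embed $F'$, so I would instead prove the slightly stronger statement by induction: \emph{if $F$ is a forest, $G$ a graph on $n$ vertices, and every vertex of $G$ has degree at least $|F|$ in $G$ restricted to some suitable set, then $F\subset G$}. Concretely, the clean way: show first that $e(G)\ge |F|n$ implies $G$ contains a subgraph $G'$ with minimum degree at least $|F|$ (repeatedly delete vertices of degree $<|F|$; this removes fewer than $|F|n$ edges in total, so the process cannot exhaust $G$, and what remains is nonempty with $\delta(G')\ge |F|$). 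Then embed $F$ into $G'$ greedily: order the vertices of $F$ as $v_1,\dots,v_{|F|}$ so that each $v_i$ with $i\ge 2$ has exactly one neighbour $v_{j(i)}$ with $j(i)<i$ (a \emph{forest ordering}, obtained by repeatedly stripping leaves/isolated vertices). Map $v_1$ arbitrarily; having embedded $v_1,\dots,v_{i-1}$, the vertex $v_{j(i)}$ has at least $|F|\ge i-1$ neighbours in $G'$, fewer than $i-1$ of which are already used, so a free neighbour exists to host $v_i$.

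The main obstacle — which is really only a bookkeeping point rather than a genuine difficulty — is making sure the greedy step always has a free vertex available, i.e.\ that at step $i$ the number of already-embedded vertices, $i-1$, is strictly less than the degree bound $|F|$; this holds since $i-1 \le |F|-1 < |F|$. One should also note the degenerate cases: $|F|=0$ is vacuous (or excluded), and if $G$ itself is empty of edges then the hypothesis $e(G)\ge |F|n$ forces $|F|=0$. Putting the leaf-stripping (to get the forest ordering), the degree-cleaning (to pass from the edge-count hypothesis to a minimum-degree hypothesis), and the greedy embedding together yields $F\subset G$, completing the proof.
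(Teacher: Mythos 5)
Your proposal is correct and follows essentially the same route as the paper: pass from the edge-count hypothesis to a subgraph $G'$ with minimum degree at least $|F|$, then embed $F$ greedily (equivalently, strip a leaf and induct), with the key observation that at most $|F|-1 < |F|$ vertices are ever occupied when a new vertex must be placed. The only cosmetic quibble is that in a forest ordering a vertex starting a new component has \emph{no} earlier neighbour rather than exactly one, but your component-by-component reduction already covers this.
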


\begin{proof}
Since $G$ has average degree at least $2|F|$, it contains a subgraph~$G'$ with minimum degree at least $|F|$. It is easy to show that $G'$ contains~$F$; for example, remove a leaf and apply induction.
\end{proof}

\subsection{Notation}

We finish this section by describing some of the notation which we shall
use throughout the paper. Most is standard (see~\cite{MGT}, for example);
we shall repeat non-standard definitions when they are first used.

For each $t \in \NATS$, let $[t] = \{1,\ldots,t\}$. 
We say that we \emph{blow up a vertex $v \in V(G)$ to size~$t$} if we replace
$v$ by an independent set of size~$t$, and replace each edge containing $v$ by a
complete bipartite graph. Given disjoint sets $X$ and $Y$, 
we shall write $K[X,Y]$ for the edge set of the complete bipartite graph on $X \cup Y$,
that is, the set of all pairs with one end in $X$ and the other in $Y$. We write
$K_s(t)$ for the complete $s$-partite graph with $t$ vertices in each part: that
is, the graph obtained from $K_s$ by blowing up each of its vertices to
size~$t$.

Given a graph $G$, we write $E(G)$ for the set of edges of $G$, and $e(G)$ for
$|E(G)|$. We use both $|G|$ and $v(G)$ to denote the number of vertices of
$G$. Given a set $X \subset V(G)$, we write $E(X)$ for the set of edges of~$G$ with both ends in~$X$, and $N(X)$ for the set of \emph{common} neighbours of the vertices in $X$.
If $D \subset E(G)$, then $N(D)$ denotes the set of common neighbours of
$V(D)$, the set of the endpoints of edges in $D$. (In particular, if $e = xy$
is an edge then $N(e) = N\big(\{x,y\}\big)=N(x) \cap N(y)$.)
Further, we let $G[D]$ denote the subgraph of~$G$ with vertex set $V(D)$ and edge
set~$D$, and write $\ol{d}(D)$ for the average degree of $G[D]$ and
$\delta(D)$ for the minimum degree of $G[D]$. 

A graph $G$ is said to be \emph{$C$-degenerate} if there exists an ordering
$(v_1,\ldots,v_n)$ of $V(G)$ such that $v_{k+1}$ has at most $C$ neighbours
in $\{v_1,\ldots,v_k\}$ for every $1\le k\le n-1$. Finally, if $e_1,\dots,e_\ell
\in E(G)$, then we shall write $\tpl{e}{\ell}$ for the tuple $(e_1,\dots,e_\ell)$.

\section{Graphs with large chromatic threshold}\label{Lylesec}

In this section we shall categorise the graphs $H$ with chromatic threshold
greater than $\lambda(H)$. First observe that a trivial upper bound on
$\delta_\chi(H)$ is given by the \emph{Tur\'an density} of~$H$,
\begin{equation*}
  \pi(H) \, = \, \lim_{n \to \infty} \frac{\ex(n,H)}{\binom{n}{2}} \, = \, \frac{\chi(H)-2}{\chi(H)-1},
\end{equation*}
since if $\delta(G)=\delta n$ with $\delta > \pi(H) n$ then $H \subset G$, by
the Erd\H{o}s-Stone Theorem~\cite{ErdosStone1946}. Moreover, it is not hard to prove the
following sufficient condition for equality, which can be found, for example,
in~\cite{Lyle10}. Recall that $\cM(H)$ denotes the decomposition family of
$H$.

\begin{prop}\label{noforest}
Let $H$ be a graph with $\chi(H) = r \ge 3$. If $\cM(H)$ does not contain a
forest, then $\delta_\chi(H) = \pi(H)=\frac{r-2}{r-1}$.
\end{prop}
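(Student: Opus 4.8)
The plan is to establish the lower bound $\delta_\chi(H) \ge \pi(H)$, since the upper bound $\delta_\chi(H)\le\pi(H)$ already follows from the Erd\H{o}s--Stone Theorem as noted. So I need to construct, for every $d < \pi(H) = \frac{r-2}{r-1}$, a sequence of $H$-free graphs $G_n$ with $\delta(G_n)\ge d|G_n|$ and $\chi(G_n)\to\infty$. First I would recall why $\cM(H)$ containing no forest is the right hypothesis: if $G$ is $K_r(t)$-free for a suitable blow-up parameter, or more precisely if $G$ contains no copy of $H$, then some structural obstruction must be present; the decomposition family records exactly which bipartite graphs must appear if one has an $(r-1)$-partite-like structure together with one more ``dense bipartite piece''. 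Since no member of $\cM(H)$ is a forest, every member has a cycle, hence (by Fact~\ref{prop:forest}, or rather its sparse converse) a bipartite graph with no member of $\cM(H)$ can be taken to have $o(n^2)$ edges, in fact to be $C$-degenerate with $C$ depending only on $H$.

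The construction itself: take a $K_r$-free graph $B$ of high chromatic number (such a graph exists for every $r\ge 3$ with $\chi(B)$ arbitrarily large, by Erd\H{o}s~\cite{Erd59} with large girth, or simply iterated Mycielski/Zykov constructions), blow up $B$ appropriately, and then within each ``part'' place a sparse bipartite graph of high chromatic number that contains no member of $\cM(H)$ --- again such graphs exist with $\chi$ arbitrarily large because the members of $\cM(H)$ all contain cycles, and by Erd\H{o}s one finds high-chromatic graphs of arbitrarily large girth, which are in particular $\cM(H)$-free once the girth exceeds $\max_{M\in\cM(H)}|M|$. More concretely I would take $G_n$ to be: $r-1$ blobs $W_1,\dots,W_{r-1}$ each of size $\frac{n}{r-1}$, with all edges between distinct blobs present (giving minimum degree close to $\frac{r-2}{r-1}n$), but the complete multipartite structure tweaked so that inside $W_1$ (say) we additionally embed a sparse high-chromatic $\cM(H)$-free graph, while leaving $W_2,\dots,W_{r-1}$ empty --- or, to get minimum degree right and still avoid $H$, one distributes a high-chromatic triangle-free-type graph on the whole vertex set in a controlled way. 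One checks $\chi(G_n)\ge\chi$ of the internal graph, which tends to infinity, and that $\delta(G_n)\ge dn$ by choosing the blob sizes to compensate for the sparse interior.

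The $H$-freeness is the crux. Suppose $H\subset G_n$ via an embedding $\phi$. Since $\chi(H)=r$ and $G_n$ is (essentially) $(r-1)$-partite plus a sparse graph inside one part, the image of $\phi$ must use two vertices $x,y$ of $H$ landing in the same blob $W_i$ with $xy\in E(H)$ (otherwise $H$ would be $(r-1)$-colourable, contradicting $\chi(H)=r$); more carefully, the pairs of colour classes of an $r$-colouring of $\phi(H)$ induced by the blob partition force all the ``extra'' edges of $H$ to lie inside blobs, and deleting the $r-2$ blobs not meeting these extra edges realises a member $M$ of $\cM(H)$ inside the sparse interior graph. But the sparse interior graph was chosen with girth larger than $|M|$, hence contains no $M$ --- contradiction. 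This girth/degeneracy argument is exactly where ``$\cM(H)$ has no forest'' is used: a forest member could be embedded in any graph with enough edges (Fact~\ref{prop:forest}), defeating the construction, whereas a member with a cycle can be avoided by a sparse high-girth graph. The main obstacle I anticipate is making the minimum-degree bookkeeping and the ``forced edge inside a blob'' argument fully rigorous simultaneously --- in particular ensuring that the sparse interior graph can be made both high-chromatic and of large girth (which is Erd\H{o}s's probabilistic theorem) and that inserting it does not drop the minimum degree below $dn$, which forces one to take $n$ large and the blob sizes slightly unequal; I would handle this by a clean limiting argument letting the ``density'' of the interior graph tend to zero as $n\to\infty$.
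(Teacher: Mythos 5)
Your construction is essentially the paper's own proof: take the complete balanced $(r-1)$-partite graph and let one class carry a high-chromatic Erd\H{o}s graph of girth greater than $|H|$, so that any embedded copy of $H$ would exhibit a forest in $\cM(H)$ (the part of $H$ inside that class is a forest by the girth condition, and together with the other $r-2$ classes it yields an $r$-colouring of $H$), contradicting the hypothesis; combined with the Erd\H{o}s--Stone upper bound this gives $\delta_\chi(H)=\pi(H)$. The only remark worth making is that the minimum-degree bookkeeping you worry about is automatic: inserting the sparse graph inside one class only increases degrees, so balanced classes already give $\delta(G)=\frac{r-2}{r-1}v(G)$ and no unequal parts or limiting argument are needed.
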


For each $k,\ell \in \NATS$, we shall call a graph $G$ a
\emph{$(k,\ell)$-Erd\H{o}s graph} if it has chromatic number at least $k$, and
girth (length of the shortest cycle) at least $\ell$. In one of the first
applications of the probabilistic method, Erd\H{o}s~\cite{Erd59} proved that
such graphs exist for every $k$ and $\ell$.

\begin{proof}[Proof of Proposition~\ref{noforest}] Let $H$ be a graph with
$\chi(H)=r\ge 3$ such that $\cM(H)$ contains no forest, let $C \in \NATS$, and
let $G'$ be a $(C,|H|+1)$-Erd\H{o}s graph; that is, $\chi(G') \ge C$ and
$\girth(G') \ge |H| + 1$. Let~$G$ be the graph obtained from the complete,
balanced $(r-1)$-partite graph on $(r-1)|G'|$ vertices by replacing one of its
partition classes with~$G'$. Then $\delta(G) = \frac{r-2}{r-1}v(G)$,  $H
\not\subset G$, and $\chi(G) \ge C$.
\end{proof}

We remark that the same construction, with the complete balanced $(r-1)$-partite
graph replaced by a complete balanced $(r-2)$-partite graph, shows that,
whatever the structure of $H$, its chromatic threshold is at
least~$\frac{r-3}{r-2}$ (see Proposition~\ref{lower}).

Lyle~\cite{Lyle10} showed that the condition of Proposition~\ref{noforest} is necessary.

\begin{theorem}[Lyle~\cite{Lyle10}]\label{lylethm}
If $\chi(H) = r\ge 3$, then $\delta_\chi(H) < \pi(H)=\frac{r-2}{r-1}$ if and
only if the decomposition family of~$H$ contains a forest.
\end{theorem}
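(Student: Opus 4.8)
The ``only if'' direction is precisely Proposition~\ref{noforest} in contrapositive form, so the task is to prove that $\delta_\chi(H)<\pi(H)$ whenever $\cM(H)$ contains a forest; in fact, as the paper indicates, the stronger bound $\delta_\chi(H)\le\lambda(H)=\frac{2r-5}{2r-3}<\frac{r-2}{r-1}=\pi(H)$ is what one should aim for. The plan is to fix a graph $H$ with $\chi(H)=r$ and a forest $F\in\cM(H)$, fix $d>\lambda(H)$, and take an arbitrary $H$-free graph $G$ on $n$ vertices with $\delta(G)\ge dn$; the goal is to bound $\chi(G)$ by a constant depending only on $H$ and $d$. First I would apply the minimum degree form of Szemer\'edi's Regularity Lemma to $G$ with suitable small parameters $\eps\ll d'\ll d-\lambda(H)$, obtaining an $(\eps,d')$-reduced graph $R$ on $k_0\le k\le k_1$ vertices with $\delta(R)\ge(d-d'-\eps)k>\lambda(H)k$. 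Because bounding $\chi(G)$ by a constant reduces (via the usual argument) to bounding the chromatic number of the ``bad'' part plus the chromatic number of a bounded-size structure, the heart of the matter is understanding the structure of $R$ and of the pairs of $G$ sitting above edges of $R$.

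The key structural input is Lyle's Regularity-Lemma-based partition of $R$. Since $\delta(R)>\tfrac{2r-5}{2r-3}k$, a fractional/greedy argument shows $R$ contains many copies of $K_r$ minus an edge (equivalently, every high-degree vertex of $R$ lies in such a configuration), and one uses this to partition the clusters of $G$ into a bounded number of groups with strong regularity and degree conditions between them. The crucial point is that if, after this partition, some cluster pair $(V_i,V_j)$ lying above an edge of $R$ were such that the bipartite graph $G[V_i,V_j]$ contained the forest $F$ while the rest of the $K_r^-$-structure persisted, then by the Counting Lemma (applied to $K_r$ with one edge replaced by a sufficiently dense bipartite pair, using Facts~\ref{prop:subpair} and~\ref{prop:forest}) we would embed a copy of $H$ into $G$, a contradiction. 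Hence every relevant bipartite pair must be $F$-free, so by the K\H{o}v\'ari--S\'os--Tur\'an theorem (or just Fact~\ref{prop:forest}) it has $o(m^2)$ edges and can be discarded after moving a bounded fraction of vertices into the exceptional set. What remains is that $G$, minus a set of vertices of chromatic number bounded by a constant, decomposes into a constant number of pieces each of which is ``essentially'' $(r-1)$-partite-like with the forest-carrying class being sparse enough to be coloured cheaply; averaging arguments in the spirit of~\cite{Allen10} are then used to control the number of pieces and to glue the colourings together, yielding $\chi(G)\le C(H,d)$.

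The main obstacle, and the place where real work is needed, is the combinatorial analysis of the reduced graph $R$ under the degree hypothesis $\delta(R)>\lambda(H)k$: one must show that the $K_r^-$-rich structure is robust enough that, after removing a bounded-measure set of clusters, the leftover regular pairs are forced to avoid $F$ \emph{simultaneously with} a surrounding clique-minus-edge, since it is only the simultaneous occurrence that produces a copy of $H$ via the Counting Lemma. Getting the quantifiers right here --- choosing the hierarchy $\eps\ll d'\ll d-\lambda(H)\le d-\pi(H)+\tfrac{2}{(2r-3)(r-1)}$, ensuring the parameters in Fact~\ref{prop:subpair} survive the passage to subpairs, and verifying the homomorphism hypothesis of the Counting Lemma for the relevant $K_r^-$-plus-$F$ configuration --- is routine but delicate. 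Everything else (the Erd\H{o}s--Stone bound for the trivial direction, discarding sparse pairs, and the final colouring count) is standard and I would treat it briefly.
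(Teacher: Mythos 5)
Your top-level plan (the ``only if'' direction from Proposition~\ref{noforest}, and the ``if'' direction via the stronger bound $\delta_\chi(H)\le\lambda(H)$, i.e.\ Theorem~\ref{thm:forest}) matches the paper, but the sketch of the key step is not workable as stated. You argue that any cluster pair $(V_i,V_j)$ above an edge of $R$ for which the surrounding $K_r$-minus-an-edge structure persists ``must be $F$-free, so by K\H{o}v\'ari--S\'os--Tur\'an it has $o(m^2)$ edges and can be discarded.'' This cannot happen: every $(\eps,d')$-regular pair of density at least $d'$ contains \emph{every} fixed forest once the clusters are large (Fact~\ref{prop:forest}), so no dense regular pair is ever $F$-free, and nothing gets discarded. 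What $H$-freeness actually forbids is a copy of $F$ \emph{all of whose vertices share} a blown-up $(r-2)$-clique $K_{r-2}(|H|)$ in their common neighbourhood (since $H\subset F+K_{r-2}(|H|)$), and regularity of the pair $(V_i,V_j)$ alone does not supply such a configuration: under the hypothesis $\delta(R)>\lambda(H)k$ the reduced graph need not contain $K_r$ at all (this is exactly the gap between $\lambda(H)$ and $\pi(H)$), so the Counting Lemma cannot be invoked the way you describe, and concluding anything about the pairs themselves does not bound $\chi(G)$.

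The mechanism the paper uses instead is different in kind. One partitions the \emph{vertices} of $G$ (not the clusters) into classes $X_I$ according to which clusters they see with density at least $d$. If $|I|\ge\frac{2r-4}{2r-3}k$, then $R[I]$ contains $K_{r-1}$, so each $x\in X_I$ has many copies of $K_{r-1}(|H|)$ in $N(x)$, and Lemma~\ref{pigeon}\ref{pigeon:b} (pigeonhole over these blow-ups) forces $|X_I|$ to be bounded. If $|I|\le\frac{2r-4}{2r-3}k$, every edge of $G[X_I]$ has at least $\big(\frac{2r-6}{2r-3}+\gamma\big)n$ common neighbours in $V_I$, a greedy argument (Lemma~\ref{pigeon}\ref{pigeon:a}) puts every such edge in $\gamma^{r-2}n^{r-2}$ copies of $K_r$, Lemma~\ref{lem:erdos} upgrades this to many extensions $e+K_{r-2}(|H|)$, and then the decisive pigeonhole is over the blown-up cliques: a positive fraction of the edges of $G[X_I]$ extend to one \emph{fixed} copy $K'$ of $K_{r-2}(|H|)$, and that subgraph of edges --- not any cluster pair --- is the object that must be $F$-free, hence sparse by Fact~\ref{prop:forest}, giving bounded degeneracy and hence bounded chromatic number of $G[X_I]$ (Lemma~\ref{lem:extend}). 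Your outline is missing this pigeonhole-over-a-common-blow-up step, and the step you put in its place fails for the reason above; the ``averaging \`a la~\cite{Allen10}'' you invoke is precisely Lemma~\ref{lem:erdos}, but it only helps after the forest has been relocated from cluster pairs to $G[X_I]$.
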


In this section we shall strengthen this result by proving that if
$\delta_\chi(H) < \pi(H)$, then it is at most $\lambda(H)$.

\begin{theorem}\label{thm:forest}
Let $H$ be a graph with $\chi(H) = r \ge 3$. If $\cM(H)$ contains a forest, then  
$$\delta_\chi(H) \, \le \, \frac{2r-5}{2r-3}.$$
\end{theorem}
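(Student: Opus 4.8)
The plan is to apply the minimum-degree form of Szemer\'edi's Regularity Lemma to an $H$-free graph $G$ with $\delta(G) \ge \big(\tfrac{2r-5}{2r-3} + \eta\big)n$, obtaining an $(\eps,d)$-reduced graph $R$ with $\delta(R) \ge \big(\tfrac{2r-5}{2r-3} + \tfrac{\eta}{2}\big)k$ (after choosing $\eps, d \ll \eta$). The key structural observation is that $R$ itself must be $K_r(t)$-free for a suitable constant $t = t(H)$: if $R$ contained $K_r(t)$ with $t$ large, then since $\cM(H)$ contains a forest $F$, the Counting Lemma would let us embed $F$ into one regular pair between two clusters of the ``first two'' parts of this $K_r(t)$-blowup (using Fact~\ref{prop:forest} to find $F$ inside the dense bipartite graph there, on clusters of size $m$), and embed the remaining $r-2$ colour classes of $H$ into the other $r-2$ parts of the blowup --- contradicting $H \not\subset G$. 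So $R$ is a dense $K_r(t)$-free graph, and we have won if we can bound $\chi(G)$ in terms of $\chi(R)$ and $t$.

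The second ingredient is the known result of Goddard and Lyle that $\delta_\chi(K_r) = \tfrac{2r-5}{2r-3}$; more precisely, any $K_r$-free graph with minimum degree exceeding $\big(\tfrac{2r-5}{2r-3}\big)k$ has bounded chromatic number. However, $R$ is only $K_r(t)$-free, not $K_r$-free, so I would instead argue directly: I would like to show that a $K_r(t)$-free graph $R$ on $k$ vertices with $\delta(R) \ge \big(\tfrac{2r-5}{2r-3} + \tfrac{\eta}{2}\big)k$ has chromatic number bounded by a constant $C_0 = C_0(r,t,\eta)$. This should follow by essentially the same argument that proves $\delta_\chi(K_r)=\tfrac{2r-5}{2r-3}$, since forbidding $K_r(t)$ rather than $K_r$ costs only a bounded multiplicative factor via a Ramsey-type / iterated-neighbourhood argument (pass to common neighbourhoods of small cliques; a $K_r(t)$-free graph cannot have too many vertex-disjoint $K_{r-1}$'s with a common neighbour). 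Given such a bound, a standard ``blow-up'' argument finishes: colour each cluster $V_i$ with its own palette of $C_0$ colours according to a proper colouring of the bipartite graphs inside $G$ restricted to $V_i$ — but more carefully, one uses that $G$ is (essentially) a blow-up of $R$, so $\chi(G)$ is controlled by $\chi(R)$ times a bounded factor accounting for the exceptional set $V_0$ and the sparse/irregular pairs; the latter contribute bounded degree and hence $O(1)$ extra colours. I would use averaging arguments in the spirit of~\cite{Allen10} here to keep the bookkeeping clean, in particular to handle the non-regular pairs and the set $V_0$ uniformly.

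I expect the main obstacle to be the step establishing the chromatic-number bound for $K_r(t)$-free reduced graphs with minimum degree just above $\tfrac{2r-5}{2r-3}$. The bound $\delta_\chi(K_r) = \tfrac{2r-5}{2r-3}$ is itself nontrivial (Goddard--Lyle), and I need the robust version forbidding only a blow-up of $K_r$; this likely requires either reproving their argument in this more general setting or extracting a ``stability/robustness'' statement from it. A secondary technical point is ensuring that the embedding of $H$ into $G$ really does only need $F \in \cM(H)$ living across a \emph{single} regular pair together with an arbitrary assignment of the other colour classes to the remaining clusters of the $K_r(t)$: since $F$ may have up to $|H|$ vertices and we only control $\binom{k}{2}$ regular pairs, I must place both colour classes of $F$ in a single pair of clusters and use that this pair has density at least $d$ with $m$ large, invoking Fact~\ref{prop:forest} on the corresponding dense bipartite graph of order $\ge m$; the Counting Lemma then supplies the rest of $H$ in the other parts. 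This is routine but needs the parameters chosen in the right order: $1/k_1 \ll \eps \ll d \ll 1/t \ll 1/|H|, \eta$.
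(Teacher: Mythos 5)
Your plan diverges from the paper's and breaks at the final colouring step: it is simply not true that $\chi(G)$ is controlled by $\chi(R)$ times a bounded factor. The edges that the reduced graph does not see --- edges inside the clusters, edges in irregular pairs, edges in pairs of density below $d$, and edges meeting $V_0$ --- are not of bounded degree (a vertex has up to $n/k$ potential neighbours inside its own cluster, up to $\eps n$ in $V_0$, and up to $dn$ in low-density pairs, all linear in $n$ since $k\le k_1$ is bounded), and, more fundamentally, even a graph with $o(n^2)$ edges can have arbitrarily large chromatic number (a $(C,|H|+1)$-Erd\H{o}s graph, for instance) and can easily be $H$-free. This is exactly why the chromatic threshold is a nontrivial invariant: in the lower-bound constructions (Proposition~\ref{noforest} and the Borsuk--Hajnal graphs of Section~\ref{borsuksec}) the high-chromatic part is a sparse graph that is invisible to any regularity partition. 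A useful sanity check is that your argument proves too much: your structural claim that $R$ contains no $K_r(t)$ already follows from $\chi(H)=r$ alone (a $K_r$ in $R$ plus the Counting Lemma gives $K_r(|H|)\supset H$ in $G$), and neither your robust Goddard--Lyle lemma nor the blow-up step uses the forest, so if they were valid you would deduce $\delta_\chi(H)\le\frac{2r-5}{2r-3}$ for \emph{every} $r$-chromatic $H$, contradicting Proposition~\ref{noforest} (e.g.\ $\delta_\chi(K_{2,2,2})=\frac12$). Incidentally, the intermediate lemma you hope for is itself false for $t\ge 2$: since $\cM\big(K_r(t)\big)=\{K_{t,t}\}$ contains no forest, Proposition~\ref{noforest} gives $\delta_\chi\big(K_r(t)\big)=\frac{r-2}{r-1}$, so there are $K_r(t)$-free graphs of minimum degree well above $\frac{2r-5}{2r-3}$ times their order with unbounded chromatic number (for the reduced graph itself this step is anyway moot, as $\chi(R)\le k_1$ trivially --- the difficulty was never there).

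For comparison, the paper never tries to colour $G$ via $R$. It partitions $V(G)$ into at most $2^{k_1}$ classes $X_I$, indexed by the set $I$ of clusters in which a vertex has density at least $d$, and bounds $\chi\big(G[X_I]\big)$ for each $I$. If $|I|\ge\frac{2r-4}{2r-3}k$, then $R[I]$ contains a $K_{r-1}$, every $x\in X_I$ has many copies of $K_{r-1}(|H|)$ in its neighbourhood, and the pigeonhole argument of Lemma~\ref{pigeon} shows that $|X_I|$ itself is bounded unless $H\subset G$. If $|I|\le\frac{2r-4}{2r-3}k$, then every edge of $G[X_I]$ has many common neighbours inside $\bigcup_{i\in I}V_i$ and hence lies in many copies of $K_r$, and Lemma~\ref{lem:extend} shows $G[X_I]$ has bounded degeneracy. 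It is precisely there that the forest is used essentially: many edges extending to a common copy of $K_{r-2}(|H|)$ would contain $F$ by Fact~\ref{prop:forest} and hence force $H\subset F+K_{r-2}(|H|)\subset G$, so the relevant edge sets are sparse and the degeneracy (hence the chromatic number) of $G[X_I]$ is bounded. The conceptual point your proposal misses is that ``few edges'' can be converted into ``bounded chromatic number'' only through such a forest-driven degeneracy argument; any approach that ignores the forest at the colouring stage cannot succeed.
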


The proof of Theorem~\ref{thm:forest} is roughly as follows. Let $\gamma > 0$,
and let $G$ be a sufficiently large graph with \[\delta(G) \, \ge \, \left(
\frac{2r-5}{2r-3} + \gamma \right)v(G).\]
For some suitably
small $\eps$ and $d$, let $V_0 \cup \ldots \cup V_k$ be the partition of $V(G)$
given by the minimum degree form of the Szemer\'edi Regularity Lemma, and $R$ be
the $(\eps,d)$-reduced graph of this partition. Define, for each $I \subset
[k]$, $$X_I \, := \, \Big\{ v\in V(G) \,\colon\, i \in I \Leftrightarrow |N(v)
\cap V_i| \ge d|V_i| \Big\}.$$ We remark that this partition
was used by Lyle~\cite{Lyle10}. We show that $\chi\big(G[X_I]\big)$ is bounded
if $H\not\subset G$. We distinguish two cases. If $|I| \ge (2r-4)/(2r-3)$, then
it is straightforward to show that $R[I]$ contains a copy of $K_{r-1}$, and hence, by the Counting Lemma, that $N(x)$ contains `many'
(i.e., a positive density of) copies of $K_{r-1}(t)$ for every $x \in X_I$. We
then use the pigeonhole principle (see Lemma~\ref{pigeon}, below), to show that
either $|X_I|$ is bounded, or $H \subset G$.

If $|I| \le (2r-4)/(2r-3)$, then set $V_I = \bigcup_{j \in I} V_j$, and observe
that every pair $x,y \in X_I$ has `many' common neighbours in $V_I$. We use a
greedy algorithm (in the form of Lemma~\ref{pigeon}\ref{pigeon:a}) to conclude
that every edge is contained in a positive density of copies of $K_r$. Finally, we
shall use a counting version of a lemma of Erd\H{o}s (Lemma~\ref{lem:erdos})
together with the pigeonhole principle to show that when $H \not\subset G$,
$\chi(G[X_I])$ is bounded (see Lemma~\ref{lem:extend}).

\smallskip

We begin with some preliminary lemmas. The following lemma
from~\cite{Allen10} will be an important tool in the proof; it is a counting
version of a result of Erd\H{o}s~\cite{Erdos64}.

\begin{lemma}[Lemma~7 of~\cite{Allen10}]\label{lem:erdos}
For every $\alpha > 0$ and $s,t \in \NATS$ there is an $\alpha' =
\alpha'(\alpha,s,t) > 0$ such that the following holds. Let $G$ be a graph
on~$n$ vertices with at least $\alpha n^s$ copies of~$K_s$. Then $G$ contains
at least $\alpha'n^{st}$ copies of $K_s(t)$.
\end{lemma}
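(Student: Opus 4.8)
The plan is to reduce this to a classical theorem of Erd\H{o}s on complete multipartite hypergraphs, combined with a routine supersaturation argument. First I would replace $G$ by its \emph{clique hypergraph}: let $\cK$ be the $s$-uniform hypergraph on vertex set $V(G)$ whose edges are exactly the vertex sets of the copies of $K_s$ in $G$, so that $e(\cK)\ge\alpha n^s$. The point of this reduction is a (readily checked) bijection between copies of $K_s(t)$ in $G$ and copies of the complete $s$-partite $s$-uniform hypergraph $F:=K^{(s)}_s(t,\dots,t)$ in $\cK$: given a partition of an $st$-set into parts $A_1,\dots,A_s$ of size $t$, all of its transversal \emph{pairs} lie in $E(G)$ if and only if all of its transversal \emph{$s$-tuples} are cliques of $G$, i.e.\ edges of $\cK$ (for one direction, extend a transversal pair to a transversal $s$-tuple; for the other, every pair inside a transversal $s$-tuple is a transversal pair). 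Under this bijection a copy of $K_s(t)$ in $G$ corresponds to exactly one copy of $F$ in $\cK$ and vice versa, so it suffices to exhibit at least $\alpha'n^{st}$ copies of $F$ in $\cK$. As is implicit in such statements, we may assume throughout that $n\ge n_0(\alpha,s,t)$.

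Next I would invoke Erd\H{o}s's bound~\cite{Erdos64}, namely $\ex(m,F)=O\big(m^{\,s-t^{1-s}}\big)=o(m^s)$, and upgrade it to a counting statement in the standard way. Fix $b=b(\alpha,s,t)$ large enough that every $s$-uniform hypergraph on $b$ vertices with at least $\tfrac{\alpha}{2}\binom{b}{s}$ edges contains a copy of $F$ (possible since $\ex(b,F)=o\big(\binom{b}{s}\big)$). For a uniformly random $b$-subset $W\subset V(G)$ one has $\Exp\big[e(\cK[W])\big]=e(\cK)\binom{b}{s}\big/\binom{n}{s}\ge\alpha\binom{b}{s}$, and since $e(\cK[W])\le\binom{b}{s}$ always, at least a $\tfrac{\alpha}{2}$-fraction of the $\binom{n}{b}$ choices of $W$ satisfy $e(\cK[W])\ge\tfrac{\alpha}{2}\binom{b}{s}$ and hence contain a copy of $F$. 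As every copy of $F$ spans exactly $st$ vertices it lies in exactly $\binom{n-st}{b-st}$ of these sets $W$, so double counting the pairs (good $W$, copy of $F$ inside $W$) gives at least $\tfrac{\alpha}{2}\binom{n}{b}\big/\binom{n-st}{b-st}=\tfrac{\alpha}{2}\binom{n}{st}\big/\binom{b}{st}\ge\alpha'n^{st}$ copies of $F$ in $\cK$ for a suitable $\alpha'=\alpha'(\alpha,s,t)>0$. By the bijection of the first paragraph this equals the number of copies of $K_s(t)$ in $G$, which completes the proof.

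The only substantive ingredient is Erd\H{o}s's hypergraph Tur\'an bound; the rest is the averaging bookkeeping above, so I do not anticipate a genuine obstacle here. If a self-contained account were wanted, that bound itself has a short proof by induction on $s$ — a K\H{o}v\'ari--S\'os--Tur\'an--style double count applied to the links of the $(s-1)$-subsets, using convexity of $x\mapsto\binom{x}{t}$ — which I would include; but being classical, I would simply cite it.
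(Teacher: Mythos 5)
The paper itself contains no proof of this lemma: it is imported verbatim as Lemma~7 of~\cite{Allen10}, described only as ``a counting version of a result of Erd\H{o}s~\cite{Erdos64}'', so there is no internal argument to compare against. Your proof is correct and is the standard derivation of exactly this kind of statement: passing to the $s$-uniform hypergraph of $K_s$-copies, the bijection with copies of $K^{(s)}_s(t,\dots,t)$ is verified correctly in both directions (a transversal pair extends to a transversal $s$-tuple, and every pair inside a transversal $s$-tuple is transversal), Erd\H{o}s's bound $\ex\big(m,K^{(s)}_s(t,\dots,t)\big)=O\big(m^{s-t^{1-s}}\big)$ justifies the choice of $b$, and the averaging over random $b$-sets together with the identity $\binom{n}{b}\binom{b}{st}=\binom{n}{st}\binom{n-st}{b-st}$ gives the claimed $\alpha' n^{st}$. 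The only wording slip is ``lies in exactly $\binom{n-st}{b-st}$ of these sets $W$'': a copy of $F$ lies in exactly that many $b$-subsets of $V(G)$, hence in \emph{at most} that many good ones, which is the direction the double count actually needs, so the bound is unaffected. Your explicit caveat $n\ge n_0(\alpha,s,t)$ is not mere pedantry but genuinely necessary: as literally stated the lemma fails for bounded $n$ (take $G=K_s$, $t\ge 2$ and $\alpha\le s^{-s}$), and the assumption is harmless in this paper since the lemma is only applied inside vertex sets of size linear in $n$.
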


We shall also use the following easy lemma, which is just an application of a
greedy algorithm and the pigeonhole principle. Let $G + H$ denote the graph
obtained by taking disjoint copies of $G$ and $H$, and adding a complete bipartite graph between the two.

\begin{lemma}\label{pigeon}
  Let $\alpha,\delta > 0$ and $s,t \in \NATS$, let~$F$ be a forest, and
  suppose that~$H \subset F + K_s(t)$. Let $G$ be a graph on~$n$ vertices,
  and $X \subset V(G)$.
  \begin{enumerate}[label=\abc]
  \item\label{pigeon:a} If $\delta(G) \ge \delta n$ and
    \[|X| \ge \big(\alpha^{1/s} s + (1 - \delta)(s-1) \big) n\,,\] then
    $G[X]$ contains at least $\alpha n^s$ copies of~$K_s$.
  \item\label{pigeon:b} If $G\big[N(x)\big]$ contains at least $\alpha
    n^{(s+1)|H|}$ copies of $K_{s+1}(|H|)$ for every $x \in X$, then either
    $H \subset G$ or $|X| \le |H|/\alpha$.
  \end{enumerate}
\end{lemma}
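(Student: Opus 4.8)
\textbf{Proof proposal for Lemma~\ref{pigeon}.}

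The plan is to treat the two parts separately, in each case combining a simple greedy/pigeonhole count with the structural hypothesis $H\subset F+K_s(t)$. For part~\ref{pigeon:a}, I would build a copy of $K_s$ in $G[X]$ greedily: pick vertices $x_1,\dots,x_s\in X$ one at a time, maintaining at each stage a common-neighbourhood inside $X$. When $x_1,\dots,x_j$ have been chosen, each already-chosen vertex excludes at most $(1-\delta)n$ vertices of $V(G)$ (its non-neighbours, since $\delta(G)\ge\delta n$), so the set of candidates for $x_{j+1}$ in $X$ has size at least $|X|-j(1-\delta)n$. Thus if $|X|\ge \alpha^{1/s}sn+(1-\delta)(s-1)n$, then at every step $j\le s-1$ the candidate set still has size at least $\alpha^{1/s}sn-\alpha^{1/s}sn\cdot\frac{j}{s}\ge\alpha^{1/s}(s-j)n\ge\alpha^{1/s}n$. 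Counting the choices and correcting for the $s!$ orderings of a fixed clique gives at least $(\alpha^{1/s}n)^s/s!\ge\alpha n^s$ copies of $K_s$ in $G[X]$ (here one should be slightly careful with the bookkeeping, but the losing factor $s!$ is absorbed because $(\alpha^{1/s})^s=\alpha$ and the constant can be tuned; alternatively one counts ordered tuples and divides). I expect this part to be entirely routine.

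For part~\ref{pigeon:b}, suppose $H\not\subset G$; I must show $|X|\le|H|/\alpha$. Fix any $x\in X$. By hypothesis $G[N(x)]$ contains at least $\alpha n^{(s+1)|H|}$ copies of $K_{s+1}(|H|)$. The idea is to use such a copy, together with the vertex $x$, to embed $F$ on one side and $K_s(t)$ on the other: write $H\subset F+K_s(t)$, embed the $K_s(t)$ part inside $s$ of the $|H|$-sized classes of a copy of $K_{s+1}(|H|)$ in $N(x)$ (possible since each class has $|H|\ge v(K_s(t))$-many... more precisely each class has size $|H|\ge |V(H)|$, enough room for the at most $|H|$ vertices $H$ uses in $K_s(t)$), and then embed the forest $F$ greedily: since $F$ has at most $|H|$ vertices and $x$ together with the vertices already used dominate everything needed, one can place $F$'s vertices into $x$ and the remaining $(s+1)$-st class $\{$plus $x\}$... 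Actually the cleanest route: any single copy of $K_{s+1}(|H|)$ inside $N(x)$, together with $x$, already contains $K_{s+1}(|H|)+K_1\supseteq K_s(|H|)+F'$ for any forest $F'$ on $\le|H|$ vertices, because $K_{s+1}(|H|)$ contains a copy of any forest on $\le|H|$ vertices in one extra part and $x$ is joined to all of it. So if even one such copy exists, $H\subset G$, contradiction. Hence in fact under the assumption $H\not\subset G$, the number of copies in $G[N(x)]$ is $0$, which already contradicts $\alpha n^{(s+1)|H|}>0$ unless $X=\emptyset$ — this suggests the intended reading is more delicate: the copies of $K_{s+1}(|H|)$ for different $x\in X$ may overlap, and the point is to find \emph{one} copy avoiding a small forbidden set. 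I would therefore argue: if $|X|>|H|/\alpha$, then averaging over $x\in X$, some fixed copy of $K_{s+1}(|H|)$ on a vertex set $Q$ with $|Q|=(s+1)|H|$ lies in $G[N(x)]$ for at least $\alpha|X|>|H|$ distinct $x\in X$; pick $|H|$ such vertices $x$, and note they together with $Q$ contain $K_{s+1}(|H|)$ plus an independent set of $|H|$ vertices each adjacent to all of $Q$ — this contains $K_s(|H|)+F$ for any forest $F$ on $\le|H|$ vertices (embed $F$ across one class of the $K_{s+1}(|H|)$ together with... ), hence contains $H$, contradiction.

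The main obstacle, and the step deserving care, is the final embedding in part~\ref{pigeon:b}: verifying that $s+1$ classes of size $|H|$ forming a complete $(s+1)$-partite graph, plus $|H|$ further vertices joined completely to all of them, really do contain $F+K_s(t)$ and hence $H$. The forest $F$ and the graph $K_s(t)$ together use at most $|H|$ vertices; embed $K_s(t)$ into $s$ of the $(s+1)$ classes (each class has $|H|\ge t\cdot s\ge$ enough room, and in fact we only need $|H|$ vertices total so certainly each class suffices), then embed $F$ into the remaining class using the fact that an independent set of size $|H|$ inside a part of a complete multipartite graph, once we also have the completely-joined extra vertices, gives us a complete bipartite pattern rich enough to host any forest on $\le|H|$ vertices — more carefully, $F$ is bipartite and $|F|\le|H|$, so $F$ embeds into $K_{|H|,|H|}$, which appears between the $(s+1)$-st class and the $|H|$ extra vertices, with every vertex of $F$ also adjacent to the full $K_s(t)$ as required by the join in $F+K_s(t)$. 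Assembling these pieces correctly, and getting the counting constant $|H|/\alpha$ exactly (via the averaging: total copies $\ge\alpha n^{(s+1)|H|}$ summed over $x\in X$ versus at most $n^{(s+1)|H|}$ possible vertex sets $Q$, so some $Q$ is good for $\ge\alpha|X|$ vertices), is where I would spend my attention.
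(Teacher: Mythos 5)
Your proposal follows essentially the same route as the paper: part~(a) is the same greedy count, and your final argument for part~(b) --- pigeonhole for a single copy of $K_{s+1}(|H|)$ lying inside $N(x)$ for at least $\alpha|X|$ vertices $x\in X$, then embedding $H$ via $F+K_s(t)\subset K_{s+2}(|H|)$, with $F$ placed in the complete bipartite graph between the unused class and the chosen vertices of $X$ --- is exactly the paper's proof (your first attempt in (b), using a single vertex $x$ as the extra class, indeed fails for the reason you noticed, and you correctly discard it).

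Two points of bookkeeping deserve correction. In part~(a) your per-step bound is misallocated: each chosen vertex excludes at most $(1-\delta)n$ vertices, and the term $(1-\delta)(s-1)n$ in the hypothesis is there precisely to cover all of these exclusions, so the candidate set has size at least $\alpha^{1/s}s\,n\ge (s!\,\alpha)^{1/s}n$ at \emph{every} step; this gives at least $s!\,\alpha n^s$ ordered choices and hence $\alpha n^s$ copies after dividing by $s!$. With only $\alpha^{1/s}n$ candidates per step, as you wrote, you obtain $\alpha n^s/s!$, and there is no constant available to ``tune'' --- it is the factor $s^s\ge s!$ coming from the full $\alpha^{1/s}s\,n$ that absorbs the orderings. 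In part~(b), pigeonhole over \emph{copies} of $K_{s+1}(|H|)$ in $G$ (of which there are at most $n^{(s+1)|H|}$) rather than over vertex sets: a vertex set can carry several copies with different partitions, so summing copies over $x$ and dividing by the number of vertex sets is a mismatch as stated; pigeonholing over copies gives the clean conclusion that one fixed copy $T$ satisfies $T\subset G[N(x)]$ for at least $\alpha|X|$ vertices $x$, which is exactly what the embedding step needs and yields the bound $|X|\le |H|/\alpha$.
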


\begin{proof}
  For part~\ref{pigeon:a}, we construct copies of~$K_s$ in $G[X]$ using the
  following greedy algorithm: First choose an arbitrary vertex $x_1 \in
  X$, then a vertex $x_2 \in X$ in the neighbourhood of $x_1$, then~$x_3 \in
  N(x_1) \cap N(x_2) \cap X$, and so on, until we find~$x_s \in X$ in the
  common neighbourhood of $x_1,\dots,x_{s-1}$. Clearly,
  $G\big[\{x_1,\dots,x_s\}\big]$ is a copy of~$K_s$.

  Now we simply count: for choosing~$x_i$ we have at least
  \[|X| - (i-1)(1-\delta)n \ge \alpha^{1/s}s \cdot n \ge (s!\alpha)^{1/s}
  \cdot n\] possibilities, so in total we have at least $s!\alpha n^s$
  choices. Since the algorithm can construct a particular copy of~$K_s$ at
  most in~$s!$ different ways, we have found at least $\alpha n^s$ distinct
  $K_s$-copies in $G[X]$.

  For part~\ref{pigeon:b}, simply observe that, by the pigeonhole
  principle, there is a copy~$T$ of $K_{s+1}(|H|)$ in~$G$ such that $T
  \subset G\big[N(x)\big]$ for at least $\alpha|X|$ vertices of~$X$. Since
  $H \subset K_{s+2}(|H|)$ this implies that either $H \subset G$ or
  $\alpha|X| < |H|$.
\end{proof}

The following result follows easily from Lemma~\ref{lem:erdos}. For a forest
$F$ and $H\subset F+K_s(t)$, it will enable us to draw conclusions about the
chromatic number of an $H$-free graph which contains many $K_{s+2}$-copies
arranged in a suitable way.

\begin{lemma}\label{lem:extend}
For every $\alpha > 0$ and $s,t \in \NATS$, there exists $\alpha' =
\alpha'(\alpha,s,t)$ such that for every forest $F$ and every graph $H \subset F
+ K_s(t)$, the following holds. Let $G$ be an $H$-free graph on~$n$ vertices, and
let $X \subset V(G)$ be such that every edge $xy \in E\big(G[X]\big)$ is
contained in at least $\alpha n^s$ copies of $K_{s+2}$ in~$G$.

Then~$G[X]$ is $(2|F|/\alpha')$-degenerate, and hence $\chi\big(G[X]\big) \le
(2|F|/\alpha')+1$.
\end{lemma}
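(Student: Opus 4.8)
The plan is to show that $G[X]$ has the claimed degeneracy by exhibiting, for every nonempty subgraph of $G[X]$, a vertex of bounded degree; equivalently, I will show that $G[X]$ cannot contain a subgraph with too-large minimum degree. So suppose for contradiction that $G[X]$ has a subgraph $G'$ with $\delta(G') \ge 2|F|/\alpha'$, where $\alpha' = \alpha'(\alpha, s, t)$ is the constant from Lemma~\ref{lem:erdos} (to be pinned down below), and with vertex set $X' \subseteq X$. The goal is to build a copy of $H$ inside $G$, contradicting $H$-freeness. Since $H \subset F + K_s(t)$, it suffices to find a copy of $F$ and a copy of $K_s(t)$ in $G$ that are completely joined to each other.

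The key idea is that every edge of $G'$ lies in many $K_{s+2}$'s of $G$, so in particular every edge $xy \in E(G')$ has the property that $N(x) \cap N(y)$ contains many $K_s$'s in $G$ — roughly $\alpha n^s / n^2 = \alpha n^{s}$ of them after accounting for the two extra vertices, so say at least $(\alpha/2) n^{s-2}\cdot$ (something); more carefully, the $\alpha n^s$ copies of $K_{s+2}$ through $xy$ restrict to at least $\alpha n^s$ copies of $K_s$ in $G[N(x)\cap N(y)]$ when we simply forget the labelled roles, so $G[N(x) \cap N(y)]$ has at least $\alpha n^s$ copies of $K_s$ (up to harmless constant factors). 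Applying Lemma~\ref{lem:erdos} with parameter $\alpha$ and parameters $s, t$, we obtain that $G[N(x)\cap N(y)]$ contains at least $\alpha' n^{st}$ copies of $K_s(t)$, for a suitable $\alpha' > 0$. In particular, for \emph{every} edge $xy$ of $G'$ there is at least one copy of $K_s(t)$ in the common neighbourhood $N(x) \cap N(y)$ — in fact many, but one suffices.

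Now I embed $F$ into $G'$ greedily, edge by edge, maintaining as I go a large set of ``candidate'' copies of $K_s(t)$ in the common neighbourhood of all vertices embedded so far. Since $G'$ has minimum degree at least $2|F| / \alpha'$, I can embed the vertices of $F$ one at a time in a leaf-removal (i.e. forest) order: each new vertex of $F$ has exactly one already-embedded neighbour, so I just need to find, among the $\ge 2|F|/\alpha'$ neighbours in $G'$ of the image of that neighbour, a vertex that still keeps a large fraction of the current candidate $K_s(t)$'s in its neighbourhood. Track the quantity ``number of copies of $K_s(t)$ in $G$ that lie in the common neighbourhood of the current image of $F$''; this starts at $\ge \alpha' n^{st}$ after the first edge, and at each subsequent step, by an averaging/pigeonhole argument over the $\ge 2|F|/\alpha'$ candidate vertices, some choice retains at least a $(1 - \alpha'/(2|F|))$-fraction of the copies — wait, that is not quite the right bookkeeping; instead I should argue: among the many $K_s(t)$-copies guaranteed by Lemma~\ref{lem:erdos} for the pivot edge, and the $\ge 2|F|/\alpha'$ neighbour choices, one neighbour is joined to a positive fraction. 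Iterating over at most $|F|$ edges of $F$ and choosing constants so that the fraction stays positive throughout (this is where the precise choice of $\alpha'$ from Lemma~\ref{lem:erdos} is used, together with the degeneracy bound $2|F|/\alpha'$), I end with at least one copy of $K_s(t)$ in $G$ joined completely to an embedded copy of $F$, hence a copy of $F + K_s(t) \supseteq H$ in $G$ — the desired contradiction. The degeneracy bound $2|F|/\alpha'$ then gives $\chi(G[X]) \le 2|F|/\alpha' + 1$ by the standard greedy colouring of a degenerate graph.

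The main obstacle is the bookkeeping in the greedy embedding of $F$: I need the number of surviving $K_s(t)$-candidates to remain positive after all $|F|$ steps while only having $\ge 2|F|/\alpha'$ neighbour choices available at each step, so the loss per step must be controllable by a factor depending only on $|F|$ and $\alpha'$. The cleanest way to arrange this is to note that Lemma~\ref{lem:erdos}, applied to the pivot edge at a given step restricted to the \emph{current} candidate set, already delivers many $K_s(t)$'s inside that candidate set (the candidate set is still linear in $n$, so Lemma~\ref{lem:erdos} applies with a possibly smaller but still positive constant); then pigeonhole over the $\ge 2|F|/\alpha'$ neighbours shows one of them catches a $\ge \alpha'/(2|F|) \cdot (\text{that constant})$-fraction — choosing $\alpha'$ small enough up front (as a function of $\alpha, s, t, |F|$) makes every constant in this chain stay positive. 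So the real content is just choosing the constants in the right order; the rest is routine.
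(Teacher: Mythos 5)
Your overall setup (negate degeneracy to get a subgraph $G'$ of $G[X]$ with minimum degree at least $2|F|/\alpha'$, convert the $K_{s+2}$-hypothesis into ``every edge of $G[X]$ extends to at least $\alpha' n^{st}$ copies of $K_s(t)$'' via Lemma~\ref{lem:erdos}, and aim for a copy of $F+K_s(t)\supset H$) matches the paper, but the embedding step has a genuine gap. You embed $F$ into $G'$ vertex by vertex while maintaining a family of candidate $K_s(t)$-copies lying in the common neighbourhood of all vertices embedded so far, and at each step you invoke an averaging/pigeonhole over the $\ge 2|F|/\alpha'$ available neighbours of the pivot to claim that some choice retains a positive fraction of the candidates. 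There is no justification for this: the hypothesis is edge-local (each edge of $G[X]$ lies in many $K_{s+2}$'s somewhere in $G$), so the many $K_s(t)$-copies attached to the new edge $uv$ need not meet your current candidate family at all, and conversely nothing forces any neighbour $v$ of the pivot to be adjacent to even one current candidate. With only a constant number of neighbour choices and no lower bound on $\sum_v \#\{\text{candidates inside } N(v)\}$, the candidate family can die in a single step. Your proposed repair --- applying Lemma~\ref{lem:erdos} to the pivot edge ``restricted to the current candidate set'' --- does not work either: the candidate set is a family of cliques, and if you instead mean the common neighbourhood of the embedded vertices, you have no guarantee that this vertex set is large or that the pivot edge has many $K_s$'s inside it; the hypothesis counts cliques in all of $G$, not in a prescribed subset.

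The missing idea, which is how the paper proceeds, is to reverse the order of quantifiers: fix the blown-up clique \emph{first}, then find all of $F$ at once. Since every edge of $G'$ extends to at least $\alpha' n^{st}$ copies of $K_s(t)$ and there are at most $n^{st}$ such copies in total, the pigeonhole principle (over copies, not over neighbours) yields a single copy $K'$ and a set $E'\subset E(G')$ with $|E'|\ge \alpha' e(G')$ such that every edge of $E'$ extends to $e+K'$. The auxiliary graph on $V(G')$ with edge set $E'$ then has at least $|F|\,|G'|$ edges (using $\delta(G')\ge 2|F|/\alpha'$), so by Fact~\ref{prop:forest} it contains $F$; every vertex of this copy of $F$ is automatically joined to all of $K'$, giving $F+K_s(t)\supset H$ in $G$, a contradiction. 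This global pigeonhole is exactly what replaces your step-by-step survival argument, and without it the proof does not go through.
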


Given a subgraph $K$ of $G$, we say that an edge~$e = xy$ of~$G$
\emph{extends to $e + K$} if $V(K)\subset N(x,y)$. We say $e$ \emph{extends to
a copy of $e+K_s(t)$} if $e$ extends to
$e+K$ for some copy $K$ of $K_s(t)$ in $G$.

\begin{proof}[Proof of Lemma~\ref{lem:extend}] Let~$\alpha' =
\alpha'(\alpha,s,t)$ be the constant provided by Lemma~\ref{lem:erdos}, let $xy
\in E(G[X])$ and let $G' = G[N(x) \cap N(y)]$. Then, by our assumption, $G'$
contains at least $\alpha n^{s}$ copies of~$K_s$. By Lemma~\ref{lem:erdos}, it
follows that $G'$ contains $\alpha' n^{st}$ copies of~$K_s(t)$, so $xy$ extends
to at least $\alpha'  n^{st}$ copies of~$e + K_s(t)$ in~$G$.

Now, let $x_1,\dots,x_{|X|}$ be an ordering of the vertices of~$X$ with the
property that $x_i$ has minimum degree in $G_i := G[X \setminus
\{x_1,\dots,x_{i-1}\}]$ for each $1\le i\le |X|$. In order to show that $G[X]$
is $(2|F|/\alpha')$-degenerate, it suffices to prove that $$e(G_i) \, \le \,
\frac{|F|}{\alpha'} \cdot |G_i|,$$ since then $\delta(G_i) \le 2|F|/\alpha'$, as
desired.

Since each edge $e \in E(G_i)$ extends to  at least $\alpha' n^{st}$ copies of
$e + K_s(t)$, it follows, by the pigeonhole principle, that there is a copy~$K'$
of $K_s(t)$ and a set $E_i \subset E(G_i)$ with $|E_i| \ge \alpha' e(G_i)$, such
that $e$ extends to~$e + K'$ for every $e \in E_i$. Let~$G^*_i$ be the graph
with vertex set $V(G_i)$ and edge set~$E_i$. Since $H \not \subset G$, it
follows that $F \not\subset G^*_i$.  Thus, by Fact~\ref{prop:forest}, we have
$$|F| \cdot |G_i| \, > \, e(G^*_i) \, \ge \, \alpha' e(G_i),$$ as required.
\end{proof}

We are ready to prove Theorem~\ref{thm:forest}. We shall apply the minimum
degree form of the Szemer\'edi Regularity Lemma, together with the Counting
Lemma and Lemmas~\ref{pigeon} and~\ref{lem:extend}.

\begin{proof}[Proof of Theorem~\ref{thm:forest}]
Let $F$ be a forest, let $r \ge 3$, and let $H$ be a graph with $\chi(H) = r$
and $F \in \cM(H)$. Observe that we have $H \subset F + K_{r-2}(|H|)$. Let
$\gamma > 0$, and let~$G$ be an $H$-free graph with $$\delta(G) \,\ge\, \left(
\frac{2r-5}{2r-3} + 2\gamma \right) n,$$ where $n = |G|$. We shall show that
$\chi(G)$ is bounded above by some constant $C = C(H,\gamma)$.

The first step is to apply the minimum degree form of Szemer\'edi's Regularity
Lemma to $G$, with 
\begin{equation}\label{eq:forest:epsd}
  d := \frac{\gamma}{2},\qquad k_0 :=  r^2\qquad\text{and}\qquad
  \eps := \min\bigg\{\frac{\gamma}{2}, \,\frac{d^2}{2d+2|H|}\bigg\}\,.
\end{equation}

We obtain a partition $V(G) = V_0 \cup V_1 \cup
\ldots \cup V_k$, where $k_0 \le k \le k_1 = k_1(\eps,d,k_0)$, with an
$(\eps,d)$-reduced graph~$R$ such that $$\delta(R) \; \geByRef{eq:forest:epsd}
\; \left( \frac{2r-5}{2r-3} + \gamma \right) k.$$ We now partition the vertices
of $V(G)$ depending upon the collection of the sets $V_i$ to which they send
`many' edges. More precisely, define $V(G) = \bigcup_{I\subset[k]} X_I$ by
setting
\begin{equation*}
X_I:= \Big\{ v\in V(G) \,\colon\,   i \in I \Leftrightarrow |N(v) \cap V_i| \ge d|V_i| \Big\},
\end{equation*}
for each $I \subset [k]$. We claim that $\chi\big(G[X_I]\big) \le
\max\{C_1,C_2+1\}$ for all~$I\subset[k]$, where $C_1 = C_1(H,\gamma)$ and $C_2
= C_2(H,\gamma)$ are constants defined below. Since the $X_I$ form a partition,
this implies that $\chi(G)\le2^k\max\{C_1,C_2+1\}\le 2^{k_1}\max\{C_1,C_2+1\}=
C(H,\gamma)$ as desired.

In order to establish this claim we distinguish two cases.

\medskip
 \noindent \textbf{Case 1}: $|I| \, \ge \, \left( \ds\frac{2r-4}{2r-3} \right)
 k$. \medskip
 
\noindent In this case we shall show that $|X_I|\le C_1$ (where $C_1$ is a constant defined
below, and independent of $n$), and thus trivially $\chi\big(G[X_I]\big)\le C_1$.
We first claim that $R[I]$ contains a copy of $K_{r-1}$. Indeed, by our minimum
degree condition on $R$, we have $$\delta\big( R[I] \big) \; \ge \; \delta(R) -
\big( k - |I| \big) \; \ge \; |I| - \left( \frac{2}{2r-3} - \gamma \right) k \;
\ge \; \left( \frac{r-3}{r-2} + \gamma \right) |I|.$$
Thus, by Tur\'an's Theorem (or simply by proceeding greedily), $R[I]$ contains a
copy of~$K_{r-1}$, as claimed. Let $\{W_1,\dots,W_{r-1}\} \subset
\{V_1,\ldots,V_k\}$ be the set of parts corresponding to this copy of~$K_{r-1}$.

Now let~$x \in X_I$, set~$W'_i = N(x) \cap W_i$ for each $i \in [r-1]$, and note
that $|W'_i|\ge d|W_i|$, by the definition of~$X_I$. By~Fact~\ref{prop:subpair},
each pair $(W'_i,W'_j)$, $i\neq j$, is $(\eps/d,d-\eps)$-regular. By the
Counting Lemma and~\eqref{eq:forest:epsd}, it follows that $G[N(x)]$ contains at least $$\alpha_1
n^{(r-1)|H|}$$ copies of $K_{r-1}(|H|)$, for some $\alpha_1 = \alpha_1(H,\gamma) > 0$.

Thus, by Lemma~\ref{pigeon}\ref{pigeon:b} (applied with
$\alpha = \alpha_1$, $s = r - 2$ and $X = X_I$), we have either $H \subset G$, a
contradiction, or $|X_I| \le |H|/\alpha_1=C_1(H,\gamma)$, as claimed.
  
\medskip \noindent \textbf{Case 2}: $|I| \, \le \, \left( \ds\frac{2r-4}{2r-3}
\right) k$. \medskip
  
\noindent In this case we shall show that  $G[X_I]$ is $C_2$-degenerate (where
$C_2$ is defined below and independent of $n$), using Lemma~\ref{lem:extend}. It
follows that $\chi\big(G[X_I]\big)\le C_2+1$. First, we shall show that
every edge of $G[X_I]$ is contained in at least $\gamma^{r-2} n^{r-2}$ copies of
$K_r$.

Let $V_I:=\bigcup_{i \in I} V_i$ denote the set of vertices in clusters
corresponding to~$I$, and let $xy \in E(G[X_I])$. By the definition of~$X_I$,
$x$ and $y$ each have at most $(d+\eps)n$ neighbours outside~$V_I$, and
thus, since $d+\eps<\gamma$, they each have at least $\big(\frac{2r-5}{2r-3} +
\gamma \big)n$ neighbours in~$V_I$.  It follows that they have at least
$2\big(\frac{2r-5}{2r-3}+\gamma\big)-|V_I|$ common neighbours in $V_I$. Finally,
since $|I|\le\big(\frac{2r-4}{2r-3}\big)k$, we have
$|V_I|\le\big(\frac{2r-4}{2r-3}\big)n$, and thus $x$ and $y$ have at least
\[\left( \frac{2r-6}{2r-3} + \gamma \right) n\] common neighbours in~$V_I$.

Now, apply Lemma~\ref{pigeon}\ref{pigeon:a} with $\alpha = \gamma^{r-2}$, $\delta = \frac{2r-5}{2r-3} +
\gamma$, $s = r - 2$, and $X = N(x) \cap N(y)$. We have $$\frac{2r-6}{2r-3} +
\gamma \; = \;  (r-2) \gamma + (r-3)\left( 1 - \frac{2r-5}{2r-3} - \gamma
\right)\,,$$ and so $G[N(x) \cap N(y)]$ contains at least $\gamma^{r-2} n^{r-2}$
copies of~$K_{r-2}$, i.e., every edge of $G[X_I]$ is contained in $\gamma^{r-2}
n^{r-2}$ copies of~$K_r$. Hence, by Lemma~\ref{lem:extend} (applied with
$\alpha=\gamma^{r-2}$, $s = r - 2$ and $t=|H|$), there exists
$\alpha'=\alpha'(\gamma^{r-2},r-2,|H|)>0$ such that $G[X_I]$ is
$2|H|/\alpha'=C_2$-degenerate, and so $\chi\big(G[X_I]\big) \le C_2+ 1$, as
required.
\end{proof}

\section{Borsuk-Hajnal graphs}\label{borsuksec}

In this section we shall describe the constructions (based on those
in~\cite{LT}) which provide the lower bounds on $\delta_\chi(H)$ in
Theorem~\ref{mainthm}. 
One of the main building blocks in these constructions is a class of graphs
which also mark the first (and most famous) application of algebraic topology
in combinatorics: the 
\emph{Kneser graphs} $\Kn(n,k)$, which are defined as follows. Given $n,k \in
\NATS$, let $\Kn(n,k)$ have vertex set $\binom{[n]}{k}$, the family of
$k$-vertex subsets of $[n]$, and let $\{S,T\}$ be an edge if and only if $S$
and $T$ are disjoint. (For example, $\Kn(5,2)$ is the well-known Petersen graph.)
These graphs were first studied by Kneser~\cite{Kneser}, who conjectured
that $\chi\big( \Kn(n,k) \big) = n - 2k + 2$ for every $n$ and $k$. This
problem stood open for 23 years, until it was solved by Lov\'asz~\cite{Lovasz},
whose proof led eventually to a new area, known as Topological
Combinatorics (see~\cite{Mat03}, for example). 

Hajnal (see~\cite{ES}) used the Kneser graphs in order to give the first
examples of dense triangle-free graphs with high chromatic number. Given
$k,\ell,m \in \NATS$ such that $2m + k$ divides $\ell$, let the
\emph{Hajnal graph}, denoted $H(k,\ell,m)$, be the graph obtained as
follows: first take a copy of $\Kn(2m+k,m)$, and a complete bipartite graph
$K_{2\ell,\ell}$, with vertex set $A \cup B$, where $|A| = 2\ell$, and $|B|
= \ell$; next partition $A$ into $2m + k$ equally sized pieces
$A_1,\ldots,A_{2m+k}$; finally, add an edge between $S \in V\big(\Kn(2m+k,m)\big)$ and
$y \in A_j$ whenever $j \in S$.

The following theorem, which appeared in~\cite{ES}, implies that $\delta_\chi(K_3) \ge 1/3$.

\begin{theorem}[Hajnal, 1973]\label{Hajnal} 
For all $\nu>0$ and $k \in \NATS$ there exist integers $m$ and~$\ell_0$ 
such that, for every~$\ell \ge \ell_0$, the Hajnal graph $G=H(k,\ell,m)$ satisfies $v(G)=3\ell+\binom{2m+k}{m}$, 
$\chi(G)\ge k+2$ and $\delta(G)\ge(\frac13-\nu)v(G)$, and is triangle-free.
\end{theorem}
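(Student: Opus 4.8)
The plan is to analyze the Hajnal graph $G = H(k,\ell,m)$ directly, choosing the parameter $m$ appropriately in terms of $k$ and $\nu$, and then letting $\ell$ grow. First I would verify the triangle-free property, which is the structural heart of the construction. The graph has three parts: the Kneser graph $\Kn(2m+k,m)$, the set $A$ (with its $2m+k$ blocks $A_1,\dots,A_{2m+k}$), and the set $B$, with $A$-$B$ being complete bipartite. Any triangle must either lie entirely in the Kneser graph (impossible, since $\Kn(2m+k,m)$ is triangle-free — three pairwise disjoint $m$-sets would need $3m \le 2m+k$, i.e.\ $m \le k$, which we will forbid by choosing $m > k$), or use at least one vertex of $A$. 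A vertex $y \in A_j$ has neighbours only in $B$ and among the $S \in \Kn(2m+k,m)$ with $j \in S$; since $B$ is independent and no Kneser-vertex is adjacent to a $B$-vertex, a triangle through $y$ would need two Kneser-neighbours $S, S'$ of $y$ that are themselves adjacent, i.e.\ disjoint — but both contain $j$, contradiction. Finally $B$ is an independent set with neighbours only in $A$, which is independent, so no triangle meets $B$. Hence $G$ is triangle-free provided $m > k$ (indeed $3m > 2m+k$), so I would require $m \ge k+1$.

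Next, the chromatic number. I would show $\chi(G) \ge k+2$ by a colouring argument: suppose $G$ is properly coloured with colours $\{1,\dots,c\}$. The complete bipartite graph on $A \cup B$ forces the colour sets used on $A$ and on $B$ to be disjoint; let $\cP \subset \{1,\dots,c\}$ be the palette used on $B$, so the palette on $A$ avoids $\cP$. Now consider block $A_j$: at least one colour class of $A$ meets $A_j$ in many vertices (since $|A_j| = 2\ell/(2m+k)$ is large while the number of colours is bounded — here is where $\ell \to \infty$ enters), and in fact for $\ell$ large enough relative to $c$ and $m,k$, there is a colour $f(j) \notin \cP$ appearing in $A_j$. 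Pick such a vertex in $A_j$ of colour $f(j)$ for each $j$; if $f$ used at most $m$ distinct colours on some $m$-set $S \in \binom{[2m+k]}{m}$, hmm — rather, define for each colour $\alpha \notin \cP$ the set $J_\alpha = \{ j : f(j) = \alpha\}$. Since the $A$-palette has size at most $c - |\cP|$, some colour $\alpha$ has $|J_\alpha| \le$ (bounded), but we want the complement: pick an $m$-subset $S$ of $[2m+k]$ that is "rainbow-free", i.e.\ on which $f$ misses some value — the point is that the Kneser-vertex $S$ is adjacent to vertices of colour $f(j)$ for every $j \in S$, so its colour lies outside $\{f(j) : j \in S\} \cup \cP$. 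The standard Hajnal/Erd\H{o}s–Simonovits argument then bounds $c$ from below; the clean way is to invoke Lov\'asz's theorem $\chi(\Kn(2m+k,m)) = k+2$ together with the observation that the colouring, restricted suitably, would yield a proper colouring of a Kneser-like quotient with too few colours. I would make this precise by choosing $m$ large enough (as a function of $k$ and $\nu$ only) and noting that for $\ell$ sufficiently large the averaging produces enough structure to force $c \ge k+2$.

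For the minimum degree, a direct computation: $v(G) = 3\ell + \binom{2m+k}{m}$. A vertex $y \in B$ has degree $|A| = 2\ell$. A vertex $y \in A_j$ has degree $|B| + \#\{S : j \in S\} = \ell + \binom{2m+k-1}{m-1}$. A Kneser-vertex $S$ has degree $\#\{T : T \cap S = \emptyset\} + \sum_{j \in S} |A_j| = \binom{m+k}{m} + m \cdot \frac{2\ell}{2m+k}$; note $\frac{m}{2m+k} \to \frac12$ as $m \to \infty$, so this is roughly $\ell$. The minimum of these three is, for large $\ell$, essentially $\ell$ (the $A$- and $\Kn$-vertices), while $v(G) = 3\ell + O_m(1)$. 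Thus $\delta(G)/v(G) \to \tfrac13$ from below-ish as $\ell \to \infty$, but one must check the Kneser-vertex degree, $\frac{2m}{2m+k}\ell \approx \ell$, is at least $(\tfrac13 - \nu) v(G) = (\tfrac13 - \nu)(3\ell + o(\ell))$: this needs $\frac{2m}{2m+k} \ge 1 - 3\nu + o(1)$, i.e.\ $\frac{k}{2m+k} \le 3\nu$, which holds once $m \ge m(k,\nu)$ is chosen large enough. So the recipe is: given $\nu, k$, first fix $m$ large enough that $m > k$ and $k/(2m+k) \le 2\nu$ (say), set $\ell_0 = \ell_0(k,m,\nu)$ large enough for the chromatic averaging and for the lower-order terms in the degree/vertex-count ratio to be absorbed; then every $\ell \ge \ell_0$ works.

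The main obstacle is the chromatic number lower bound $\chi(G) \ge k+2$: the degree and triangle-free verifications are routine arithmetic, but pinning down $\chi(G) \ge k+2$ requires carefully transferring a hypothetical good colouring of $G$ into a contradiction with Lov\'asz's Kneser theorem, handling the averaging over the large blocks $A_j$ to guarantee each block exhibits a colour outside the $B$-palette. This is exactly the step where the interplay between the Kneser gadget and the blown-up bipartite part does its work, and where the choice of $m$ (not just $\ell$) matters.
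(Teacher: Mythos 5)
Your verifications of triangle-freeness (via $m>k$, so that three pairwise disjoint $m$-sets cannot fit in $[2m+k]$, plus the case analysis through $A$ and $B$) and of the minimum degree (the binding constraint being the Kneser vertices, of degree $\binom{m+k}{m}+\frac{2m}{2m+k}\ell$, handled by taking $m$ large so that $k/(2m+k)$ is small and then $\ell$ large to absorb the additive constants) are correct and match the only sensible way to prove those parts; note the paper itself gives no proof of this theorem, citing Erd\H{o}s--Simonovits instead.

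The genuine gap is the step you yourself flag as ``the main obstacle'': the bound $\chi(G)\ge k+2$. Your sketch launches into an averaging argument over the blocks $A_j$ (palettes $\cP$, choice functions $f(j)$, the sets $J_\alpha$), visibly trails off mid-construction, and ends with ``the averaging produces enough structure to force $c\ge k+2$'' without any argument; moreover your framing is mistaken, since neither the largeness of $\ell$ nor the interplay with the bipartite part $A\cup B$ is needed for this step at all. By construction $H(k,\ell,m)$ contains the Kneser graph $\Kn(2m+k,m)$ as a subgraph (all Kneser edges are retained; the construction only \emph{adds} edges to $A$), so monotonicity of the chromatic number together with Lov\'asz's theorem $\chi\big(\Kn(n,r)\big)=n-2r+2$ (quoted in the paper just before the construction) gives $\chi(G)\ge \chi\big(\Kn(2m+k,m)\big)=(2m+k)-2m+2=k+2$ in one line, for every $m$ and $\ell$. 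This is exactly how the paper handles the chromatic bound in its generalisation (Theorem~\ref{thm:borsuk}), where $\chi(\BH)\ge\chi(B[U])$ is observed trivially from the subgraph relation. The bipartite blow-up exists solely to raise the minimum degree, and $\ell$ large is needed only there; once you replace your unfinished colouring argument by this observation, your proposal becomes a complete proof.
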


In order to generalise Theorem~\ref{Hajnal} from triangles to arbitrary
3-chromatic graphs which are not near-acyclic, {\L}uczak and
Thomass\'e~\cite{LT} defined the so-called Borsuk-Hajnal graphs. We shall
next describe their construction.

The \emph{Borsuk graph} $\Bor(k,\eps)$ has vertex set $S^k$, the $k$-dimensional
unit sphere, and edge set $\{xy\colon \ang(x,y) \ge \pi - \eps\}$, where
$\ang(x,y)$ denotes the angle between the vectors~$x$ and~$y$. It follows from
the Borsuk-Ulam Theorem (see~\cite{Mat03}, for example) that
$\chi\big(\Bor(k,\eps)\big) \ge k + 2$ for any $\eps > 0$.

In order to construct Borsuk-Hajnal graphs from Borsuk graphs, we also need
the following theorem, which follows easily from a result of of
Ne\v{s}et\v{r}il and Zhu~\cite{NesZhu}.

\begin{theorem}\label{HellNes}
  Given $\ell \in \NATS$ and a graph $G$, there exists a graph $G'$ with
  girth at least $\ell$, such that $\chi(G') = \chi(G)$, and such that there
  exists a homomorphism $\phi$ from $G'$ to $G$.
\end{theorem}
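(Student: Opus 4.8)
The plan is to derive this directly from a result of Ne\v{s}et\v{r}il and Zhu~\cite{NesZhu} on sparse graphs with prescribed homomorphism properties; modulo that input there is essentially nothing to do. First note that of the three conclusions about $G'$, one direction of the chromatic-number equality is automatic: the homomorphism $\phi\colon G'\to G$ composes with an optimal colouring of $G$ (viewed as a homomorphism $G\to K_{\chi(G)}$) to give $\chi(G')\le\chi(G)$. So the task reduces to constructing a graph $G'$ of girth at least $\ell$ that maps homomorphically to $G$ and has $\chi(G')\ge\chi(G)$.

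Write $r=\chi(G)$. The theorem of Ne\v{s}et\v{r}il and Zhu guarantees, for every graph $G$ and every $\ell$, a graph $G'$ of girth at least $\ell$ together with a homomorphism $G'\to G$ which, moreover, is ``no easier to colour'' than $G$: concretely, $G'\not\to F$ whenever $G\not\to F$, for any graph $F$ on at most $r-1$ vertices (in their formulation this is the statement that the construction preserves the circular, hence the ordinary, chromatic number). Apply this with $F=K_{r-1}$: since $\chi(G)=r$ we have $G\not\to K_{r-1}$, so $G'\not\to K_{r-1}$ and therefore $\chi(G')\ge r$. Combined with $\chi(G')\le r$ from the previous paragraph, $\chi(G')=r=\chi(G)$, as required.

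It is worth saying where the actual difficulty lies, since the deduction above is purely formal. The obvious attempt at a hands-on construction — blow each vertex of $G$ up to a large independent set and keep a sparse random subset of the edges of the blow-up — handles two of the three requirements cheaply: any cycle of length less than $\ell$ projects to a closed walk of the same length in the fixed graph $G$, so the expected number of short cycles is negligible compared with the number of vertices, and deleting one vertex per short cycle leaves a graph of girth at least $\ell$ covering almost all vertices of the blow-up that still maps homomorphically to $G$. What this does \emph{not} control is the chromatic number from below, since a sparse random subgraph of a blow-up is typically far easier to colour than $G$. Preserving the full ``colouring hardness'' of $G$ while forcing the girth up is the one genuinely non-routine ingredient, and it is exactly what the amalgamation (partite-construction) machinery behind~\cite{NesZhu} supplies.
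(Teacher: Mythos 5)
Your proposal is correct and takes essentially the same route as the paper, which gives no proof of Theorem~\ref{HellNes} beyond the remark that it ``follows easily from a result of Ne\v{s}et\v{r}il and Zhu''; your write-up simply makes that easy deduction explicit (the homomorphism $G'\to G$ yields $\chi(G')\le\chi(G)$, and the preserved non-homomorphism into graphs on fewer than $\chi(G)$ vertices, applied to $K_{\chi(G)-1}$, yields the reverse inequality). Nothing further is needed.
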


Now, given $k \in \NATS$, a set $W \subset S^k$ with $|W|$ even, and $\eps,\delta > 0$, we define the Borsuk-Hajnal graph, $\BH=\BH(W;k,\eps,\delta)$, as follows. 

First, let $B = \Bor(k,\eps)$ be the Borsuk graph, and let $U \subset S^k = V(B)$ be a finite set, with $U$ chosen 
such that $\chi(B[U]) = k + 2$. (This is possible by the de Bruijn-Erd\H{o}s Theorem~\cite{BruErd51}, 
which states that every infinite graph with chromatic number~$k'$ has a finite
subgraph with chromatic number~$k'$.) Let $B'$ denote the graph given by Theorem~\ref{HellNes}, applied with $G=B[U]$ 
and $\ell=k$, let $\phi$ be the corresponding homomorphism from~$B'$ to~$B[U]$, 
and let~$U'$ be the vertex set of~$B'$. 

Let $X$ be a set of size $|W|/2$, and recall that $K[W,X]$ denotes the edge set of the complete bipartite graph with parts $W$and $X$.

\begin{definition}[The Borsuk-Hajnal graph] \label{def:BH}
Define $\BH=\BH(W;k,\eps,\delta)$ to be the graph on vertex set $U' \dcup W
\dcup X$, where $U'$, $W$ and $X$ are pairwise disjoint and
as described above, with the following edges: \[E(B') \cup K[W,X] \cup \Big\{
\{u,w \} : u \in U', \, w \in W \textup{ and } \ang\big(\phi(u),w\big) <
\frac{\pi}{2} - \delta \Big\}\,.\]
\end{definition}

Observe that~$\chi(\BH) \ge \chi\big(B[U]\big) > k$.

\begin{theorem}[{\L}uczak and Thomass\'e~\cite{LT}]\label{LTborsuk}  
  For every $k \in \NATS$ and $\nu>0$, there exist $\eps,\delta > 0$ and $W \subset S^k$, 
  such that, setting $\BH=\BH(W;k,\eps,\delta)$, we have
  $$\chi(\BH) \,\ge\, k \qquad \text{ and } \qquad \delta(\BH) \,\ge\, \left( \frac13 - \nu \right)v(\BH).$$
  Moreover every subgraph $H \subset \BH$ with $v(H) < k$ and $\chi(H) = 3$ is near-acyclic.

  Hence, for any~$H$ with $\chi(H)=3$ which is not near-acyclic, we have $\delta_\chi(H)\ge 1/3$.
\end{theorem}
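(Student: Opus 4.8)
The plan is to construct, for every target $C = k$, an $H$-free graph with chromatic number at least $C$ and minimum degree at least $(1/3 - \nu)$ times its order, using the Borsuk--Hajnal graph as the building block. The final sentence follows immediately from the rest of Theorem~\ref{LTborsuk}: applying it with this value of $k$ (and any fixed $\nu$) yields a graph $\BH$ with $\chi(\BH) \ge k \ge C$ and $\delta(\BH) \ge (1/3 - \nu)v(\BH)$; since every $3$-chromatic subgraph on fewer than $k$ vertices is near-acyclic and $H$ is not, we have $H \not\subset \BH$ as long as $v(H) < k$, which we may assume since $C = C(H,d)$ would otherwise need to bound $\chi$ for all such $G$, forcing $d \ge \delta_\chi(H)$. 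As $\nu > 0$ is arbitrary, letting $\nu \to 0$ gives $\delta_\chi(H) \ge 1/3$.

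Thus the real content is proving the ``Moreover'' part of Theorem~\ref{LTborsuk}, and I would organise it in three steps. First, the \textbf{chromatic number bound}: this is essentially immediate from the construction, since $\BH$ contains $B'$ as a subgraph, $\chi(B') = \chi(B[U]) = k + 2 > k$, so $\chi(\BH) \ge k$. Second, the \textbf{minimum degree bound}: here one chooses $W \subset S^k$ to be a sufficiently fine and symmetric net of the sphere (for instance, taking $W$ to consist of pairs of antipodal points distributed so that for every $w \in S^k$ a $(1/3 - \nu)$-fraction of $W$ lies in the open spherical cap of angular radius $\frac{\pi}{2} - \delta$ around $w$), and checks degrees of the three vertex types. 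Vertices in $X$ have degree $|W|$, which dominates; vertices in $W$ have roughly $|X| = |W|/2$ neighbours in $X$ plus however many in $U'$, and since $|U'|$ is a fixed constant independent of the (large) parameter $\ell$ controlling $|W|$, one makes $|W|$ large enough that $X$-vertices are the bottleneck; vertices in $U'$ have degree at least the number of $w \in W$ with $\ang(\phi(u),w) < \frac{\pi}{2} - \delta$, which by the choice of net is at least $(1/3 - \nu/2)|W|$, and this too can be made a $(1/3 - \nu)$-fraction of $v(\BH) = |U'| + \tfrac32|W|$ by taking $|W|$ large and $\delta$ small.

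The \textbf{main obstacle} is the third step: showing that every subgraph $H \subset \BH$ with $v(H) < k$ and $\chi(H) = 3$ is near-acyclic. The key is that $B'$ has girth at least $k > v(H)$ by Theorem~\ref{HellNes}, so $H$ restricted to the $U'$-part is a forest $F$. The remaining vertices of $H$ lie in $W \dcup X$, and since $K[W,X]$ is complete bipartite while $W$ and $X$ are each independent, the trace of $H$ on $W \dcup X$ is bipartite; one argues that $S := V(H) \cap (W \dcup X)$ can be taken (or further refined into) an independent set playing the role required in the definition of near-acyclic --- the subtle point being to verify the odd-cycle condition, namely that for each tree $T$ of $F$ with colour classes $V_1(T), V_2(T)$, no vertex of $S$ has neighbours in both classes. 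This is where the geometry enters: a vertex $w \in W$ is joined to $u \in U'$ only when $\phi(u)$ lies in the cap of radius $\frac{\pi}{2} - \delta$ about $w$, and any two vertices $u, u'$ of a common tree of $F$ are at bounded graph-distance hence (via $\phi$ and the Borsuk adjacency, which forces near-antipodality) satisfy an angular constraint; if $w$ were adjacent to vertices in both colour classes of $T$ one derives that two near-antipodal points of $B[U]$ both lie within $\frac{\pi}{2} - \delta$ of $w$, i.e. within $\pi - 2\delta$ of each other, contradicting $\ang \ge \pi - \eps$ once $\eps < 2\delta$. Making this angular bookkeeping precise --- tracking how errors accumulate along a path of length up to $v(H)$ in the forest and choosing $\eps \ll \delta$ accordingly --- is the technical heart of the argument; vertices in $X$ are handled separately since they have no neighbours in $U'$ at all, so they trivially satisfy the odd-cycle condition and can be absorbed into $S$.
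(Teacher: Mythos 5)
Your route is essentially the paper's (which proves the more general Theorem~\ref{thm:borsuk} and contains this statement as the case $r=3$): $\chi(\BH)\ge\chi(B[U])>k$, a degree count exploiting $|W|\gg|U'|$, and the girth-plus-angle-accumulation argument for near-acyclicity, followed by the standard deduction of $\delta_\chi(H)\ge 1/3$ (there, by the way, you do not need the contorted clause about $v(H)<k$: since $k$ is yours to choose, simply take $k>\max\{C,v(H)\}$). However, two steps fail as written. The first is the minimum-degree verification for vertices of $U'$. You only require that a $(\tfrac13-\nu)$-fraction of $W$ lie in each spherical cap of angular radius $\tfrac{\pi}{2}-\delta$, and then claim that a degree of $(\tfrac13-\tfrac{\nu}{2})|W|$ suffices. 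But $v(\BH)=|U'|+\tfrac32|W|$, so $\delta(\BH)\ge(\tfrac13-\nu)v(\BH)$ forces every $u\in U'$ to have roughly $(\tfrac12-\tfrac32\nu)|W|$ neighbours in $W$, and $(\tfrac13-\tfrac{\nu}{2})|W|$ falls short of this for every small $\nu$; with your net the $U'$-vertices would only reach degree about $\tfrac29 v(\BH)$. The fix is that the cap of radius $\tfrac{\pi}{2}-\delta$ covers almost half of $S^k$, so one should demand a $(\tfrac12-\tfrac{\nu}{2})$-fraction of $W$ in each of the finitely many caps centred at $\phi(u)$, $u\in U'$; the paper achieves exactly this by fixing $\delta$ so the cap has measure $\tfrac12-\tfrac{\nu}{2}$ and taking $W$ random, with a Chernoff bound and a union bound over $U'$ (Claim~\ref{cl:borsuk:degree}). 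Your deterministic, antipodally symmetric net would also work once the target fraction is corrected; the randomness is a convenience, not an essential ingredient.

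The second problem is your choice of partition witnessing near-acyclicity. You set $S:=V(H)\cap(W\cup X)$ and say the $X$-vertices ``can be absorbed into $S$''. But $K[W,X]$ is complete bipartite, so this $S$ is in general not an independent set, and independence of $S$ is part of the definition of near-acyclic; your parenthetical ``further refined'' is resolved in the wrong direction. The correct split (as in the paper) is $S:=V(H)\cap W$ and $F:=H[U'\cup X]$: since $e(U',X)=0$ and $X$ is independent, the $X$-vertices are isolated vertices of $F$, and $F$ is a forest because $B'$ has girth at least $k>v(H)$. One must then also note that any odd cycle through an $X$-vertex automatically meets $W$ in two vertices (both of its cycle-neighbours lie in $W$), which is the easy half of Claim~\ref{cl:borsuk:path}. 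With that in place, your angle-accumulation argument along a path of odd distance inside a tree of $U'$, with $\eps$ chosen so that $k\eps<2\delta$ (the paper takes $\eps=\delta/(2k)$), is exactly the paper's argument in its equivalent tree formulation, and the two repairs make your proof coincide with the paper's.
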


We shall generalise the {\L}uczak-Thomass\'e construction further, as follows, to give
our claimed lower bound on $\delta_\chi(H)$ for $r$-chromatic $H$ which are
not $r$-near-acyclic. 

\begin{definition}[The $r$-Borsuk-Hajnal graph] \label{def:BHr}
Define $\BH_r(W;k,\eps,\delta)$ to be the graph obtained from the Borsuk-Hajnal graph
$\BH = \BH(W;k,\eps,\delta)$ by adding $r-3$ independent sets
$Y_1,\dots,Y_{r-3}$ of size $|Y_1| = \ldots = |Y_{r-3}| = |W|$, and the
following edges: 
$$\bigcup_{1 \le i < j \le r-3}  K[Y_i,Y_j] \,\cup\, \bigcup_{i = 1}^{r-3} K[Y_i,V(\BH)].$$
That is, we add the complete $(r-2)$-partite graph on $V(\BH) \cup Y_1 \cup \ldots \cup Y_{r-3}$.
\end{definition}

The following result extends Theorem~\ref{LTborsuk} to arbitrary $r \ge 3$.

\begin{theorem}\label{thm:borsuk}
For every $r \ge 3$, $k \in \NATS$ and $\nu>0$, there exist $\eps,\delta > 0$ and $W \subset S^k$, 
  such that, setting $\BH_r=\BH_r(W;k,\eps,\delta)$, we have
  $$\chi(\BH_r) \,\ge\, k \qquad \text{ and } \qquad \delta(\BH_r) \,\ge\, \left( \frac{2r-5}{2r-3} - \nu \right)v(\BH_r).$$
  Moreover every subgraph $H \subset \BH_r$ with $v(H) < k$ and $\chi(H) = r$ is $r$-near-acyclic.

  Hence, for any~$H$ with $\chi(H)=r$ which is not $r$-near-acyclic, we have $\delta_\chi(H)\ge \frac{2r-5}{2r-3}$.
\end{theorem}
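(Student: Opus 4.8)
The plan is to verify the three assertions about $G:=\BH_r(W;k,\eps,\delta)$ --- that $\chi(G)\ge k$, that $\delta(G)\ge\bigl(\tfrac{2r-5}{2r-3}-\nu\bigr)v(G)$, and that every $H\subset G$ with $v(H)<k$ and $\chi(H)=r$ is $r$-near-acyclic --- and then to read off the lower bound on $\delta_\chi(H)$. Since $\BH_3=\BH$, the case $r=3$ is precisely Theorem~\ref{LTborsuk}, so assume $r\ge4$. Observe that $G=\BH+K_{r-3}(|W|)$ is the join of the {\L}uczak--Thomass\'e graph $\BH=\BH(W;k,\eps,\delta)$ with a complete $(r-3)$-partite graph, so $\chi(G)=\chi(\BH)+(r-3)\ge k$ follows at once from Theorem~\ref{LTborsuk}. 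Write $u:=|U'|$ and $w:=|W|$; since $|X|=w/2$ we have $v(\BH)=u+\tfrac32 w$ and $v(G)=u+(r-\tfrac32)w$. For the parameters I would take $\eps,\delta$ as produced by Theorem~\ref{LTborsuk} applied with a small enough parameter $\nu'=\nu'(r,\nu)$ in place of $\nu$, and then take $W$ to be a larger equidistributed finite subset of $S^k$ with $w$ as large as needed relative to $u$; this is permissible, just as the fattening parameter may be made arbitrarily large in Hajnal's construction (Theorem~\ref{Hajnal}), and one checks from the proof of Theorem~\ref{LTborsuk} that $\BH$ then still has $\delta(\BH)\ge(\tfrac13-\nu')v(\BH)$ and the near-acyclic property.

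The heart of the matter, and the step I expect to be the main obstacle, is the minimum-degree bound, since $\delta(\BH)\ge(\tfrac13-\nu')v(\BH)$ does not by itself survive the addition of the $(r-3)w$ new edges at each vertex of $\BH$ and must be combined with $w$ being large. The useful identity is $\tfrac{2r-5}{2r-3}\bigl(r-\tfrac32\bigr)=r-\tfrac52$, so that, writing $\rho:=\tfrac{2r-5}{2r-3}$, one has $\rho v(G)=\rho u+(r-\tfrac52)w$. Now one reads off degrees in $G$: a vertex of a part $Y_i$ is joined to all of $V(\BH)$ and to the other $r-4$ parts $Y_j$, hence has degree $u+(r-\tfrac52)w\ge\rho v(G)$; a vertex of $X$ is joined to all of $W$ and of $Y_1\cup\dots\cup Y_{r-3}$, hence has degree $(r-2)w$, which exceeds $\rho v(G)$ once $w\ge2u$; a vertex of $W$ has degree at least $\tfrac{w}{2}+(r-3)w=(r-\tfrac52)w\ge\rho v(G)-u$; and a vertex $u_0\in U'$ has degree $\deg_{\BH}(u_0)+(r-3)w\ge(\tfrac13-\nu')v(\BH)+(r-3)w$, which a short computation shows to be at least $(\rho-\nu)v(G)$ provided $\nu'$ is small and $w\ge C(r)u/\nu$. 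For the same reason $u\le\nu v(G)$, so the $W$- and $X$-vertices also lie above $(\rho-\nu)v(G)$; hence $\delta(G)\ge(\rho-\nu)v(G)$.

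For the embedding statement, suppose $H\subset G$ with $v(H)<k$ and $\chi(H)=r$, and set $V_i:=V(H)\cap Y_i$ for $1\le i\le r-3$. Each $V_i$ is independent in $H$ since $Y_i$ is independent in $G$, and deleting $V_1,\dots,V_{r-3}$ from $H$ leaves $H':=H\bigl[V(H)\cap V(\BH)\bigr]$, a subgraph of $\BH$ on fewer than $k$ vertices. I claim that $\chi(H')=3$. On one hand $\chi(H')\ge\chi(H)-(r-3)=3$, because extending any proper colouring of $H'$ by one fresh colour on each $V_i$ properly colours $H$. On the other hand $\chi(H')\le3$: the restriction of $H'$ to $U'$ is a subgraph of $B'$ on fewer than $k$ vertices, hence a forest since $B'$ has girth at least $k$, so it may be coloured with colours $1$ and $2$; colouring $H'\cap W$ with colour $3$ and $H'\cap X$ with colour $1$ then gives a proper colouring, since $\BH$ has no edge within $W$, within $X$, or between $U'$ and $X$. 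Thus $H'\subset\BH$ has fewer than $k$ vertices and $\chi(H')=3$, so $H'$ is near-acyclic by Theorem~\ref{LTborsuk}; since $H$ is obtained from $H'$ by adding back the $r-3$ independent sets $V_1,\dots,V_{r-3}$, the graph $H$ is $r$-near-acyclic.

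Finally, to deduce the lower bound: let $H$ satisfy $\chi(H)=r$ and not be $r$-near-acyclic, and let $d<\tfrac{2r-5}{2r-3}$ and $C\in\NATS$ be arbitrary. Applying the above with $\nu:=\tfrac{2r-5}{2r-3}-d$, with $k:=\max\{C,v(H)\}+1$, and with $|W|$ chosen arbitrarily large, we obtain graphs $G=\BH_r(W;k,\eps,\delta)$ of unbounded order with $\delta(G)\ge d\,v(G)$, with $\chi(G)\ge k>C$, and $H$-free --- for a copy of $H$ in $G$ would have fewer than $k$ vertices and chromatic number $r$, hence be $r$-near-acyclic, a contradiction. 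So for this $d$ no constant $C=C(H,d)$ can bound $\chi(G)$ over $H$-free graphs $G$ with $\delta(G)\ge d\,v(G)$; that is, $d<\delta_\chi(H)$. Letting $d\uparrow\tfrac{2r-5}{2r-3}$ gives $\delta_\chi(H)\ge\tfrac{2r-5}{2r-3}$, as required.
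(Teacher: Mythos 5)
Your proposal is correct in outline and, where it is complete, matches the paper's argument in substance; the interesting difference is how much you try to black-box. You deduce the degree bound by join arithmetic (via $\tfrac{2r-5}{2r-3}\bigl(r-\tfrac32\bigr)=r-\tfrac52$) on top of $\delta(\BH)\ge(\tfrac13-\nu')v(\BH)$, and you reduce the ``moreover'' clause to the $r=3$ clause of Theorem~\ref{LTborsuk} by first checking that $H':=H\bigl[V(H)\cap V(\BH)\bigr]$ has $\chi(H')=3$ (forest inside $U'$ by the girth of $B'$, independent sets $W$ and $X$, no $U'$--$X$ edges). That reduction is a genuine shortcut: the paper instead reproves near-acyclicity of $H'$ from scratch, via the geometric claim (Claim~\ref{cl:borsuk:path}) that an even path in $U'$ of length less than $k$ has nearly antipodal images under $\phi$, hence no common neighbour in $W$, so every odd cycle of $H'$ meets $W$ twice. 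Your final deduction of $\delta_\chi(H)\ge\tfrac{2r-5}{2r-3}$ is the same as the paper's.

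The one load-bearing step you wave at is ``take $W$ larger \dots\ one checks from the proof of Theorem~\ref{LTborsuk}.'' This cannot be extracted from the \emph{statement} of that theorem: it gives no control of $|W|$ relative to $|U'|$ --- its own degree condition forces only $|W|\gtrsim\tfrac23|U'|$ (a vertex of $X$ has $\BH$-degree exactly $|W|$), whereas your $r\ge4$ count needs $|W|\ge 2|U'|$ for the $X$-vertices and $|W|\ge C(r)|U'|/\nu$ for the $U'$-vertices (with $r=4$ and $|W|=\tfrac23|U'|$ an $X$-vertex has degree $\tfrac43|U'|<\tfrac35 v(\BH_4)$). Likewise, applying the ``moreover'' clause to your enlarged $W$ presupposes that this clause is $W$-independent, which again is a fact about the proof, not the statement. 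So the construction really has to be re-opened: choose $W$ as $w_0$ random (or suitably equidistributed) points of $S^k$ with $w_0$ as large as needed, use Chernoff and a union bound to see that every $v\in U'$ still has $\deg(v,W)\ge(\tfrac12-\nu')|W|$ (the paper's Claim~\ref{cl:borsuk:degree}), and note that the odd-cycle argument depends only on $\eps$, $\delta$ and $k$ (the paper's Claim~\ref{cl:borsuk:path}). You flag this dependence honestly, and the filled-in version coincides with the paper's proof, but as written this step is outsourced rather than proved.
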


Theorem~\ref{thm:borsuk} follows easily from \L uczak and Thomass\'e's
argument for Theorem~\ref{LTborsuk}; for completeness, we shall provide a
proof here.

\begin{proof}[Proof of Theorem~\ref{thm:borsuk}]
As noted above, we have $\chi\big( \BH_r(W;k,\eps,\delta) \big) > k$ for every choice of $W$, $\eps$ and $\delta$. For the other properties, we shall choose $W$ randomly, and $\eps,\delta > 0$ as follows.

Let~$r \ge 3$, $k \in \NATS$, and~$\nu > 0$ be arbitrary, and choose $\delta >
0$ such that the spherical cap of~$S^k$ (centred around the pole) with polar
angle $\frac{\pi}{2}-\delta$ covers a $(\frac12-\frac\nu2)$-fraction
  of~$S^k$. Set $\eps = \delta/(2k)$,
   $$ u_0 \,:=\, v\big(\BH_r(\emptyset;k,\eps,\delta)\big) \, = \, |U'|,$$
  and choose $w_0$ sufficiently large such that
  \begin{equation}\label{eq:borsuk:w}  
    \exp\left(-\frac{\nu^2w_0}{4}\right)<\frac{1}{u_0}
   \quad\text{and}\quad
    \left(\frac{2r-5}{2}-\nu\right)w_0\ge
    \left(\frac{2r-5}{2r-3}-\nu\right)\left(\frac{2r-3}{2}w_0+u_0\right),
  \end{equation}
  which is possible because $(2r-3)/2 > 1$. Draw~$w_0$ points uniformly at
  random from~$S^k$, call the resulting set~$W$ and consider the graph
  $\BH_r=\BH_r(W;k,\eps,\delta)$.
  
  We show first that, with positive probability, $\BH_r$ has the desired minimum
  degree. Let $Y = Y_1 \cup \ldots \cup Y_{r-3}$, and recall that $\BH_r$ has
  vertex set $U' \dcup W \dcup X \dcup Y$ and that $|U'| = u_0$.

 \begin{claim}\label{cl:borsuk:degree}
   With positive probability the following holds. For every $v\in V(\BH_r)$,
   \begin{equation*}
     \deg_{\BH_r}(v) \,\ge\, \left( \frac{2r-5}{2} - \nu \right) |W|.
   \end{equation*}
 \end{claim}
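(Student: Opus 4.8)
The plan is to bound $\deg_{\BH_r}(v)$ from below separately for each of the four kinds of vertex in $V(\BH_r) = U' \dcup W \dcup X \dcup Y$, and to observe that the random set $W$ is only needed to handle the vertices in $U'$. First I would record the consequence of Definition~\ref{def:BHr} that $\BH_r$ contains the complete $(r-2)$-partite graph on $V(\BH) \cup Y_1 \cup \dots \cup Y_{r-3}$; since each $Y_i$ has size $|W|$, this already gives every vertex of $V(\BH) = U' \dcup W \dcup X$ at least $(r-3)|W|$ neighbours, and every vertex of $Y$ at least $(r-4)|W| + |V(\BH)|$ neighbours.

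With this in hand, three of the four cases are deterministic, holding for every choice of $W$. A vertex of some $Y_i$ has degree at least $(r-4)|W| + |V(\BH)| = (r-4)|W| + |U'| + \tfrac{3}{2}|W| \ge \tfrac{2r-5}{2}|W|$. A vertex of $X$ is joined to all of $W$ via $K[W,X]$, so has degree at least $(r-3)|W| + |W| = (r-2)|W| \ge \tfrac{2r-5}{2}|W|$. A vertex of $W$ is joined to all of $X$ via $K[W,X]$, so has degree at least $(r-3)|W| + \tfrac{1}{2}|W| = \tfrac{2r-5}{2}|W|$.

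The only remaining case is a vertex $u \in U'$, whose neighbours in $W$ are precisely the points of $W$ lying in the spherical cap $\{w \in S^k : \ang(\phi(u),w) < \tfrac{\pi}{2} - \delta\}$ about $\phi(u)$; by the choice of $\delta$ this cap has normalised measure $\tfrac{1}{2} - \tfrac{\nu}{2}$. Since $W$ consists of $w_0 = |W|$ independent uniform points of $S^k$, the number $Z_u$ of such neighbours has the $\Bin\big(w_0, \tfrac{1}{2} - \tfrac{\nu}{2}\big)$ distribution, with mean $\big(\tfrac{1}{2} - \tfrac{\nu}{2}\big)w_0$. A standard concentration inequality (Hoeffding's, say) then gives $\Pr\big[Z_u < \big(\tfrac{1}{2} - \nu\big)w_0\big] \le \exp\big(-\tfrac{1}{4}\nu^2 w_0\big) < 1/u_0$, the final step being exactly the first inequality in~\eqref{eq:borsuk:w}. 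A union bound over the at most $u_0 = |U'|$ choices of $u$ shows that with positive probability $Z_u \ge \big(\tfrac{1}{2} - \nu\big)|W|$ for all $u \in U'$ at once, and on that event $\deg_{\BH_r}(u) \ge (r-3)|W| + \big(\tfrac{1}{2} - \nu\big)|W| = \big(\tfrac{2r-5}{2} - \nu\big)|W|$; combined with the three deterministic bounds this proves the claim.

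I do not expect a genuine obstacle here: the sole probabilistic input is this one concentration estimate, which is calibrated precisely by the choice of $w_0$ in~\eqref{eq:borsuk:w}, and everything else is the bookkeeping of checking that each edge class listed in Definitions~\ref{def:BH} and~\ref{def:BHr} has been counted and that $r - 3 + \tfrac{1}{2} = \tfrac{2r-5}{2}$ (so that, when $r = 3$, the cases involving $Y$ are simply vacuous and the bound reduces to the one in Theorem~\ref{LTborsuk}).
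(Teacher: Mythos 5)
Your proposal is correct and follows essentially the same route as the paper: the deterministic degree bounds for vertices in $W \cup X \cup Y$ (which the paper dismisses as "easy to check" and you verify explicitly), and for $u \in U'$ the observation that $\deg(u,W)\sim\Bin\bigl(|W|,\tfrac12-\tfrac\nu2\bigr)$ by the choice of $\delta$, followed by the Chernoff bound calibrated by the first inequality of~\eqref{eq:borsuk:w} and a union bound over the $u_0$ vertices of $U'$. No gaps.
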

 
 \begin{claimproof}[Proof of Claim~\ref{cl:borsuk:degree}]
  For $v \in W \cup X \cup Y$, it is easy to check that $\deg_{\BH_r}(v) \ge \left( \frac{2r-5}{2} \right)|W|$.
  Moreover, if $v \in U'$ then $Y \subset N(v)$ and $|Y| = (r-3)|W|$. Thus it will suffice to show 
  that the following event~$\sigma$ holds with positive
   probability: For every $v \in U'$ we have $\deg_{\BH_r}(v,W)\ge (\tfrac12-\nu)|W|$. 
   
   To this end observe that, for a given $v \in U'$, the value of $\deg_{\BH_r}(v,W)$ is a random variable~$B$
   with distribution $\Bin\big( |W|,\frac12-\frac\nu2 \big)$. This follows because~$W$ was chosen
   uniformly at random from~$S^k$, by our choice of $\delta$, and since by
   Definition~\ref{def:BH}, $v$ is adjacent to $w\in W$ if and only if
   $\ang(\phi(v),w)\le \frac\pi2 - \delta$. Thus, by Chernoff's inequality (see, e.g., 
   \cite[Chapter~2]{JaLuRu:Book}),
  \begin{equation*}
     \Prob\Big(\deg_{\BH_r}(v,W) < \big( \tfrac12 - \nu \big)|W|\Big) \,\le\, \exp\Big( -\frac{\nu^2|W|}{4}\Big) 
     \,\lByRef{eq:borsuk:w}\, \frac{1}{|U'|}.
   \end{equation*}
By the union bound, the event~$\sigma$ holds with positive probability, as required.
\end{claimproof}

Using~\eqref{eq:borsuk:w}, we have 
$$\left( \frac{2r-5}{2} - \nu \right) |W| \, \ge \, \left( \frac{2r-5}{2r-3} - \nu \right) v(\BH_r),$$
and so the desired lower bound on $\delta(\BH_r)$ follows immediately from the claim.

\smallskip

  Finally, let us show that every subgraph~$H \subset \BH_r$ with $v(H) < k$ and $\chi(H) = r$ is
  $r$-near-acyclic. We begin by showing that $H' := H[U'\cup W \cup X]$ is near-acyclic. 
  
  Observe first that $H'[W]$ is independent, and recall (from
  Definition~\ref{def:BH}) that $\BH_r[U']$ has girth at least $k >
  v(H)$. Thus $H'[U' \cup X]$ is a forest, since all of its edges are
  contained in $U'$. It therefore suffices to prove the following claim.
    
   \begin{claim}\label{cl:borsuk:path}
   Every odd cycle in $H'$ contains at least two vertices of $W$.
  \end{claim}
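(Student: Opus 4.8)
The plan is to argue by contradiction: suppose $C$ is an odd cycle in $H'$ with $|V(C)\cap W|\le 1$, and derive a contradiction from the three edge-types in Definition~\ref{def:BH}. The first step is to record the local structure of $H'$. In $H'$ the set $W$ is independent, and every vertex of $X$ has all of its neighbours in $W$, since the only edges of $\BH$ incident to $X$ are those of $K[W,X]$. Hence if $C$ contained a vertex of $X$, both of its $C$-neighbours would lie in $W$, contradicting $|V(C)\cap W|\le 1$; so $V(C)\subseteq U'\cup W$. If moreover $V(C)\cap W=\emptyset$, then $C$ is a cycle inside $H'[U']$, which is an induced subgraph of $\BH[U']=B'$; but $B'$ has girth at least $k>v(H)\ge|V(C)|$ by Theorem~\ref{HellNes} (applied with $\ell=k$), so this is impossible. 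Thus we may assume $V(C)\cap W=\{w\}$ for a single vertex $w$.

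Now let $a$ and $b$ be the two neighbours of $w$ on $C$; both lie in $U'$. Deleting $w$ from $C$ leaves an $a$--$b$ path $P=(a=p_0,p_1,\dots,p_m=b)$ with $V(P)\subseteq U'$ and $E(P)\subseteq E(B')$. Since $|V(C)|=m+2$ is odd, $m$ is odd, and since $|V(C)|\le v(H)<k$ we have $m<k$. The key step is to transport the endpoints of $P$ through the homomorphism $\phi\colon B'\to B[U]$ into the Borsuk graph $B=\Bor(k,\eps)$ and measure angles. Each edge $p_ip_{i+1}$ of $P$ maps to an edge of $B$, so $\ang(\phi(p_i),\phi(p_{i+1}))\ge\pi-\eps$, equivalently $\ang(-\phi(p_i),\phi(p_{i+1}))\le\eps$. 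Using that $\ang$ is a metric on $S^k$ together with $\ang(-x,-y)=\ang(x,y)$, a short induction gives $\ang\big((-1)^i\phi(p_0),\phi(p_i)\big)\le i\eps$ for every $i$. Taking $i=m$, which is odd, and recalling that $\eps=\delta/(2k)$ and $m<k$, we obtain
$$\ang(\phi(a),\phi(b))\ \ge\ \pi-m\eps\ >\ \pi-k\eps\ =\ \pi-\tfrac{\delta}{2}\,.$$

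On the other hand, $a$ and $b$ are both adjacent to $w$ in $\BH$, so the third edge-type in Definition~\ref{def:BH} forces $\ang(\phi(a),w)<\tfrac{\pi}{2}-\delta$ and $\ang(\phi(b),w)<\tfrac{\pi}{2}-\delta$; by the triangle inequality for $\ang$, $\ang(\phi(a),\phi(b))<\pi-2\delta$. Since $\pi-2\delta<\pi-\tfrac{\delta}{2}$, this contradicts the previous display, and the claim follows. I do not anticipate a genuine obstacle here: the argument is precisely the expected interplay between the large girth of $B'$ (which kills cycles lying entirely in $U'$), the homomorphism into the Borsuk graph (which turns an odd walk in $U'$ into a nearly antipodal pair of images), and the ``nearly right angle'' rule attaching $W$ to $U'$ (which prevents two vertices both adjacent to $w$ from having nearly antipodal images). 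The only care needed is bookkeeping — the parity of $m$, the chase $m\eps<k\eps=\delta/2$ (which is exactly why $\eps$ was set to $\delta/(2k)$), and the initial observation that vertices of $X$ cannot help produce a ``cheap'' odd cycle.
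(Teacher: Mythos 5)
Your proof is correct and follows essentially the same route as the paper: rule out vertices of $X$ (all of whose neighbours lie in $W$), rule out cycles avoiding $W$ via the girth of $B'$, and then use the homomorphism $\phi$ into the Borsuk graph to show that the two endpoints of the odd-length path left after deleting the unique vertex of $W$ have nearly antipodal images, hence (by the choice $\eps=\delta/(2k)$ and the cap condition) no common neighbour in $W$. The paper phrases the final step as the endpoints not lying in a common spherical cap of polar angle $\frac{\pi}{2}-\delta$, which is exactly your triangle-inequality contradiction.
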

  
  \begin{claimproof}[Proof of Claim~\ref{cl:borsuk:path}]
    Let~$C$ be an odd cycle in~$H'$. (Hence $v(C)<k$.) If $V(C)\cap X \neq
    \emptyset$ then $|V(C)\cap W|\ge 2$ since $e(U',X)=0$ and~$X$ is
    independent. Thus we may assume that $V(C) \cap X =
    \emptyset$. Similarly, since $H'[U']$ is a forest, we must have $|V(C)
    \cap W| \ge 1$.
  
    Let $P = v_1\dots v_p$ be a path in~$U'$ with $p < k$ and~$p$ even.
    Recall that $\phi(v_1),\dots,\phi(v_p)$ are vectors from $S^k$ such
    that $\ang\big(\phi(v_i),\phi(v_{i+1})\big)\ge\pi-\eps$ for all
    $i\in[p-1]$.  We shall show that $N_{\BH_r}(v_1)\cap N_{\BH_r}(v_p)\cap
    W = \emptyset$, i.e., that $\phi(v_1)$ and~$\phi(v_p)$ do not lie in a
    common spherical cap with angle $\frac\pi2 - \delta$.  Indeed, we have
    $\ang\big(\phi(v_1),\phi(v_3)\big)\le 2\eps$, and, in general,
    $\ang\big(\phi(v_1),\phi(v_{2j})\big)\ge\pi-2j\eps$ for every
    $j\in[p/2]$.  Hence $\ang\big(\phi(v_1),\phi(v_p)\big)\ge\pi-k\eps >
    2(\frac\pi2-\delta)$, and so, by Definition~\ref{def:BH}, $v_1$ and
    $v_p$ do not have a common neighbour in~$W$, as required.

    This implies that $V(C) \cap U'$ cannot be a path on $v(C)-1$ vertices and
    thus we conclude $|V(C) \cap W| \ge 2$.
 \end{claimproof}

  Finally, note that as $H[Y_i]$ is an independent set for each $i \in [r-3]$, and $H'$ is obtained by removing these sets, $H$ is indeed $r$-near-acyclic, as required.
   \end{proof}

\section{Zykov graphs}\label{Zykovsec}

In this section we shall prove a key result on Zykov graphs (see
Definition~\ref{def:Zykov} and Proposition~\ref{lem:rich-H}, below), which will
be an important tool in our proof of Theorem~\ref{mainthm}. 
Let $G'$ be a graph
with connected components $C_1,\ldots,C_m$, and let $G$ be the graph obtained
from $G'$ by adding, for each $m$-tuple $\bfu = (u_1,\ldots,u_m) \in C_1 \times
\dots \times C_m$, a vertex $v_\bfu$ adjacent to each~$u_j$. This construction
was introduced by Zykov~\cite{Zykov} in order to obtain triangle-free graphs
with high chromatic number.

We shall use the following slight modification of Zykov's construction. Recall
that to blow up a vertex $v \in V(G)$ to size~$t$ means to replace $v$ by an
independent set of size~$t$, and replace each edge containing $v$ by a complete bipartite
graph, and that $K(v,X)$ denotes the set of pairs $\{ vx : x \in X\}$.

\begin{definition}[Modified Zykov graphs]\label{def:Zykov}
Let $T_1,\ldots,T_\ell$ be (disjoint) trees, and let $T_j$ have bipartition $A_j
\dcup B_j$. We define~$Z_\ell(T_1,\ldots,T_\ell)$ to be the graph on
vertex set $$V\big( Z_\ell(T_1,\ldots,T_\ell) \big) := \bigg( \bigcup_{j \in
[\ell]} A_j \cup B_j \bigg) \cup \big\{ u_I \colon I \subset [\ell] \big\}$$ and
with edge set
\begin{equation*}
E\big( Z_\ell(T_1,\ldots,T_\ell) \big) \, := \, \bigcup_{j = 1}^\ell
\Bigg(E(T_j) \cup \bigcup_{j \in I \subset [\ell]} K\big( u_I,A_j \big) \cup
\bigcup_{j \not\in I \subset [\ell]} K\big( u_I,B_j \big) \Bigg).
\end{equation*}
For each $r \ge 3$ and $t \in \NATS$, the \emph{modified Zykov graph}
$Z_\ell^{r,t}(T_1,\ldots,T_\ell)$ is the graph obtained from
$Z_\ell(T_1,\ldots,T_\ell)$ by performing the following two operations:
\begin{enumerate}[label=\abc]
  \item Add vertices $W = \{ w_1, \dots, w_{r-3} \}$, and all edges with
    an endpoint in $W$.
  \item Blow up each vertex $u_I$ with $I \subset [\ell]$ and each
    vertex $w_j$ in $W$ to a set $S_I$ or $S'_j$, respectively, of size $t$.
\end{enumerate}
Finally, we shall write $Z_\ell^{r,t}$ for the modified Zykov graph obtained
when each tree $T_i$, $i\in[\ell]$, is a single edge; that is,
$Z_\ell^{r,t} = Z_\ell^{r,t}(e_1,\ldots,e_\ell)$.
\end{definition}

Note that $Z_\ell^{r,t}$ has $(2^\ell + r - 3)t + 2\ell$ vertices, and that, in
the special case $r = 3$ and $t = 1$, the graph $Z_\ell^{r,t}$ coincides with
that obtained by Zykov's construction (described above) applied to a matching of
size $\ell$.

The following observation motivates (and follows immediately from)
Definition~\ref{def:Zykov}.

\begin{obs}\label{acy=Zyk}
Let $\chi(H) = r$. Then $H$ is $r$-near-acyclic if and only if there exist trees
$T_1,\ldots,T_\ell$ and $t \in \NATS$  such that $H$ is a subgraph of
$Z_\ell^{r,t}(T_1,\ldots,T_\ell)$.
\end{obs}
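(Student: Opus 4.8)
The plan is to unwind both definitions and set up an explicit dictionary; the statement really is a bookkeeping exercise, which is why it is phrased as an observation.

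For the ``if'' direction, suppose $\chi(H)=r$ and $H\subseteq Z:=Z_\ell^{r,t}(T_1,\dots,T_\ell)$ for some trees $T_1,\dots,T_\ell$ and some $t\in\NATS$. First I would note that each blow-up class $S'_i$ of $w_i$ is independent in $Z$, so the sets $I_i:=S'_i\cap V(H)$ are $r-3$ independent sets of $H$; removing them leaves a subgraph $H_0$ of $\tilde Z:=Z-(S'_1\cup\dots\cup S'_{r-3})$, which is exactly $Z_\ell(T_1,\dots,T_\ell)$ with each $u_I$ blown up to $S_I$. Since $\tilde Z$ is $3$-colourable (give $\bigcup_j A_j$, $\bigcup_j B_j$ and $\bigcup_I S_I$ three distinct colours) we get $\chi(H_0)\le 3$, while deleting $r-3$ vertex classes cannot drop the chromatic number by more than $r-3$, so $\chi(H_0)\ge\chi(H)-(r-3)=3$; hence $\chi(H_0)=3$. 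To see that $H_0$ is near-acyclic, take $F:=\big(\bigcup_j T_j\big)\cap V(H_0)$ (a forest) and $S:=\big(\bigcup_I S_I\big)\cap V(H_0)$ (independent in $Z$, hence in $H_0$); these partition $V(H_0)$. Every component $T'$ of $F$ sits inside a single $T_j$, so its two colour classes lie in $A_j$ and $B_j$ respectively, and a vertex of $S_I$ is adjacent in $Z$ only to $A_j$ (if $j\in I$) or only to $B_j$ (if $j\notin I$); thus no vertex of $S$ meets both colour classes of $T'$. This is precisely the near-acyclicity condition, so $H$ is $r$-near-acyclic.

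For the ``only if'' direction, suppose $H$ is $r$-near-acyclic with $\chi(H)=r$: there are independent sets $I_1,\dots,I_{r-3}\subseteq V(H)$ whose removal leaves a near-acyclic graph $H_0$, with $V(H_0)=V(F)\dcup S$ where $F$ is a forest with components (trees) $T^1,\dots,T^\ell$, $S$ is independent, and no vertex of $S$ has neighbours in both colour classes of any $T^j$. I would take the target to be $Z:=Z_\ell^{r,t}(T^1,\dots,T^\ell)$ with $t:=|H|$, where $A_j\dcup B_j$ is the (unique) bipartition of $T^j$, and construct an injective homomorphism $H\to Z$ as follows: map $F$ identically onto the disjoint copies of the $T^j$; for $s\in S$ put $I(s):=\{j\in[\ell]:s\text{ has a neighbour in }A_j\}$ and send $\{s\in S:I(s)=I\}$ injectively into $S_I$; and send each $I_i$ injectively into $S'_i$. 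The near-acyclicity condition guarantees that if $s$ has a neighbour in $B_j$ then $s$ has no neighbour in $A_j$, i.e.\ $j\notin I(s)$, which matches the edges $K(u_{I(s)},B_j)$ of $Z$; symmetrically for $A_j$; and the edges of $F$ are edges of $Z$ by construction. Since $S$ is independent there are no $S$--$S$ edges to worry about, and every edge of $H$ with an endpoint in some $I_i$ is realised because $w_i$ (hence $S'_i$) is complete to everything in $Z$. The domains of the three pieces, $V(F)$, $S$ and $I_1\cup\dots\cup I_{r-3}$, partition $V(H)$ and their images lie in pairwise disjoint parts of $Z$, so the whole map is injective; thus $H$ is a subgraph of $Z_\ell^{r,t}(T^1,\dots,T^\ell)$.

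There is no real obstacle here; the only points needing a moment's care are that a connected component of the forest $F$ is contained in a single one of the trees (so that ``the colour classes of a component'' literally sit inside $A_j$ and $B_j$), the elementary bound $\chi(H_0)\ge\chi(H)-(r-3)$ used to pin down $\chi(H_0)=3$ in the ``if'' direction, and the choice of a blow-up size $t$ (any $t\ge|H|$ works) large enough to accommodate all $S$-vertices sharing an index set and all of each $I_i$.
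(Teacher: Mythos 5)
Your proof is correct and follows essentially the same route as the paper: the same dictionary (take $W=\bigcup_I S_I\cap V(H)$ and $U_i=S'_i\cap V(H)$ in one direction, and in the other embed $H$ into $Z_\ell^{r,t}(T_1,\dots,T_\ell)$ with $t=|H|$ by routing each $s\in S$ into $S_{I(s)}$ and each removed independent set into an $S'_i$). You simply spell out details the paper leaves implicit, such as the verification that $\chi(H_0)=3$ and the explicit check that the map is an injective homomorphism.
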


\begin{proof}
Recall that $H$ is $r$-near-acyclic if and only if there exist $r-2$ independent
sets $U_1,\ldots,U_{r-3},W$ such that $H \setminus \big(W\cup\bigcup_j U_j\big)$
is a forest $F$
whose components are trees $T_1,\ldots,T_\ell$ with the following property. For
each $i\in[\ell]$, there is no vertex of $W$ adjacent to vertices in both
partition classes of $T_i$. If $H\subset Z_\ell^{r,t}(T_1,\ldots,T_\ell)$ then we can take $W =
\bigcup_{I \subset [\ell]}S_I$ and $U_1,\ldots,U_{r-3}$ to be the sets
$S'_1,\ldots,S'_{r-3}$, and so $H$ is $r$-near-acyclic, as claimed. Conversely
if $H$ is $r$-near-acyclic, then $H \subset Z_\ell^{r,t}(T_1,\ldots,T_\ell)$,
where $t = |H|$ and $T_1, \ldots, T_\ell$ are the components of the forest $F$.
\end{proof}

It will be convenient for us to provide a compact piece of notation for the
adjacencies in $Z_\ell^{r,t}$. For this purpose, given a graph $G$ and a set $Y\subset
V(G)$, and integers $\ell,t \in \NATS$ and $r \ge 3$, define $\cG^{r,t}_\ell
(Y)$ to be the collection of functions
\[S \,:\, 2^{[\ell]} \cup [r-3] \,\to\, \binom{Y}{t}\,.\]
It is natural to think of $S$ as a family $\{S_I : I \subset [\ell]\} \cup \{
S'_j : j \in [r-3]\}$ of subsets of $Y$ of size $t$. We say that $S \in
\cG_\ell^{r,t}(Y)$ is \emph{proper} if these sets are pairwise disjoint and
$E(G)$ contains all edges $xy$ with $x\in S_I\cup S'_j$ and $y\in S'_{j'}$, whenever $j\neq j'$. We
shall write $\cF_\ell^{r,t}(Y)$ for the collection of proper functions
in $\cG_\ell^{r,t}(Y)$. The idea behind this definition is that we will later
want to consider a vertex set $Y\subset V(G)$ and a family of disjoint subsets
$\{S_I\colon I\subset[\ell]\}\cup\{S'_j\colon j\in[r-3]\}$ of size $t$ in $Y$
that we want to extend to a copy of $Z_\ell^{r,t}$.

For an ordered pair $(x,y)$ of vertices of $G$, a function
$S\in\cF_\ell^{r,t}(Y)$, and $i \in [\ell]$, we write $(x,y) \to_i S$, if
$S'_j \subset N(x,y)$ for every $j \in [r-3]$ and
\[\bigcup_{I \,:\, i \in I} S_I \subset N(x)  \qquad \textup{and} \qquad
\bigcup_{I \,:\, i \not\in I} S_I \subset N(y)\,.\] For an edge $e=xy\in
E(G)$, we write $e\to_i S$ if either $(x,y)\to_i S$ or $(y,x)\to_i S$.
Recall that $\tpl{e}{\ell}$ denotes the $\ell$-tuple $(e_1,\ldots,e_\ell)$, with
$\tpl{e}{0}$ the empty tuple. Define
\[\tpl{e}{\ell} \to S \qquad \Leftrightarrow \qquad e_i
\to_i S \quad \textup{for each $i \in [\ell]$}\,.\] 
Observe that the graph
$Z_\ell^{r,t}$ consists of a set of pairwise disjoint edges $e_1,\ldots,e_\ell$
and an $S \in \cF_\ell^{r,t}(Y)$ such that $\tpl{e}{\ell} \to S$. An advantage
of this notation is that we can write $\tpl{e}{\ell}\to S$ even if the edges in
$\tpl{e}{\ell}$ are not pairwise disjoint. This will greatly clarify our proofs.

In Section~\ref{VCsec}, we shall show how to find a well-structured set of many
copies of $Z_\ell^{r,t}$ inside a graph with high minimum degree and high chromatic
number. The following definition (in which we shall make use of the compact
notation just defined) makes the concept of `well-structured' precise. Recall
that, given $X \subset V(G)$, we write $E(X)$ for the edge set of $G[X]$, and
that if $D \subset E(G)$, then $\delta(D)$ denotes the minimum degree of the
graph $G[D]$.

\begin{definition}[$(C,\alpha)$-rich in copies of $Z_\ell^{r,t}$]\label{def:rich}
Let~$X$ and~$Y$ be disjoint vertex sets in a graph~$G$, let~$C \in \NATS$ and
$\alpha > 0$, and let $s := (2^\ell+ r - 3) t$. We say that $(X,Y)$ is
\emph{$(C,\alpha)$-rich} in copies of $Z_\ell^{r,t}$ if
\begin{align*}
& \exists \, D=D(\tpl{e}{0})\subset E(X) \; \forall  \, e_1\in D \; \exists \,
D(\tpl{e}{1})\subset E(X) \; \forall \, e_2 \in D(\tpl{e}{1}) \quad \dots \\ &
\hspace{2cm} \dots \quad \forall \, e_{\ell-1} \in D(\tpl{e}{\ell-2}) \;
\exists \, D(\tpl{e}{\ell-1}) \subset E(X) \; \forall \, e_\ell \in
D(\tpl{e}{\ell-1})
\end{align*}
the following properties hold:
\begin{enumerate}[label=\abc]
  \item\label{def:rich:a} $\delta(D), \delta\big(D(\tpl{e}{1})\big), \dots,
  \delta\big(D(\tpl{e}{\ell-1})\big) > C$, and 
  \item\label{def:rich:b} $\left|
  \big\{ S \in \cF_\ell^{r,t}(Y) \,:\, \tpl{e}{\ell} \to S \big\} \right|  \ge
  \alpha |Y|^s$.
\end{enumerate}
If $(X,Y)$ is $(C,\alpha)$-rich in copies of $Z_\ell^{r,t}$, then, for each $q \in [\ell]$, define
\begin{equation}\label{def:Dq}
\cD_q(X,Y) \, := \, \Big\{ \tpl{e}{q} \in E(X)^q \,:\, e_{j} \in D(\tpl{e}{j-1})
\textup{ for each } j \in [q] \Big\},
\end{equation}
where $D(\tpl{e}{0}) := D$.
\end{definition}

The aim of this section is to prove the following proposition, which says that
if some pair $(X,Y)$ in $G$ is $(C,\alpha)$-rich in copies of $Z_\ell^{r,t}$
(where $\alpha > 0$ and $C$ is sufficiently large), then for any `small'
$T_1,\ldots,T_\ell$ we have $Z_\ell^{r,t}(T_1,\ldots,T_\ell) \subset G$, and
hence (by Observation~\ref{acy=Zyk}) $G$ is not $H$-free for any $r$-near-acyclic
graph~$H$.

\begin{prop}\label{lem:rich-H}
Let~$G$ be a graph, and let $X$ and $Y$ be disjoint subsets of its vertices. Let
$r,\ell, t \in \NATS$, with $r \ge 3$, and let $\alpha > 0$. Let
$T_1,\ldots,T_\ell$ be trees, and set $C := 2^{\ell+3} \alpha^{-1}
\sum_{i=1}^{\ell} |T_i|$.
If $(X,Y)$ is $(C,\alpha)$-rich in copies of $Z_\ell^{r,t}$, then
$Z_\ell^{r,t}(T_1,\ldots,T_\ell) \subset G$.
\end{prop}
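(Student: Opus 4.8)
The plan is to build an embedding of $Z_\ell^{r,t}(T_1,\ldots,T_\ell)$ into $G$ by choosing the $\ell$ trees one at a time, peeling off one tree $T_i$ at each level $i$ of the nested quantifier structure coming from $(C,\alpha)$-richness. At level $i$ we will have already fixed edges $e_1,\ldots,e_{i-1}$ realising $T_1,\ldots,T_{i-1}$ (in the sense that each $e_j$ has been replaced by a full embedding of $T_j$ with its bipartition classes $A_j,B_j$ placed on the two sides of $e_j$), and we will have a set $D(\tpl{e}{i-1})\subset E(X)$ with $\delta\big(D(\tpl{e}{i-1})\big) > C$ to choose $e_i$ from. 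The key point is that $C$ is chosen large enough that a graph of minimum degree exceeding $C = 2^{\ell+3}\alpha^{-1}\sum_i|T_i|$ contains, not just one copy of each $T_i$, but enough copies that we can afford the losses incurred below; here Fact~\ref{prop:forest}, or rather the observation that high min-degree forces many embeddings of a fixed tree, does the work.

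The main technical obstacle is that embedding $T_i$ — rather than a single edge $e_i$ — changes the adjacency requirement on the set family $S$: the relation $\tpl{e}{\ell}\to S$ asks that $\bigcup_{I\ni i}S_I\subset N(e_i^{(1)})$ and $\bigcup_{I\not\ni i}S_I\subset N(e_i^{(2)})$ for the two endpoints of $e_i$, whereas for the tree $T_i$ we need $\bigcup_{I\ni i}S_I$ in the common neighbourhood of all of $A_i$ and $\bigcup_{I\not\ni i}S_I$ in the common neighbourhood of all of $B_i$. So after selecting the copy of $T_i$ sitting on $e_i$, the family of good $S$'s shrinks. The way to control this is a counting/pigeonhole argument: property~\ref{def:rich:b} guarantees $\ge\alpha|Y|^s$ functions $S$ with $\tpl{e}{\ell}\to S$ at the bottom level, and we will show inductively (going \emph{up} from level $\ell$ back to level $0$, or equivalently maintaining the invariant as we descend) that at each level a positive fraction — losing a factor like $\alpha/2^{\ell+3}$ or so per tree, which is exactly what the exponent in $C$ is designed to pay for — of the relevant $S$'s survives being compatible with a \emph{fixed} good embedding of $T_i$. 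Concretely: among the $\gg C^{|T_i|}$ embeddings of $T_i$ extending a chosen $e_i\in D(\tpl{e}{i-1})$, by averaging there is one embedding that is $\to_i$-compatible with at least a $1/(\text{something bounded})$-fraction of the $S$'s that $e_i$ itself was $\to_i$-compatible with, because $\to_i$-compatibility of $S$ with the tree only fails for $S$ whose relevant union of $S_I$'s avoids the common neighbourhood of some vertex of $T_i$, and each vertex of $T_i$ beyond the endpoints of $e_i$ only costs a bounded factor.

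In more detail, the steps I would carry out are: (1) Fix notation: unwind Definition~\ref{def:rich} so that after choosing $e_1\in D,\ e_2\in D(\tpl{e}{1}),\ldots$ we land on a family $\mathcal S_0 := \{S\in\cF_\ell^{r,t}(Y):\tpl{e}{\ell}\to S\}$ of size $\ge\alpha|Y|^s$, with $s=(2^\ell+r-3)t$. (2) Choose the edges greedily: at step $i$, use $\delta\big(D(\tpl{e}{i-1})\big)>C$ together with the fact that high min-degree graphs contain many labelled copies of the tree $T_i$ (remove a leaf and induct, as in Fact~\ref{prop:forest}) to find $\Omega_{T_i}(C^{|T_i|})\cdot|D(\tpl{e}{i-1})|$ rooted embeddings of $T_i$ into $G[D(\tpl{e}{i-1})]$, one root being an endpoint of some edge $e_i$. (3) Average over these embeddings against the family $\mathcal S_{i-1}$: for each candidate embedding $\psi_i$ of $T_i$, let $N(\psi_i)$ be the number of $S\in\mathcal S_{i-1}$ that remain good after demanding $\bigcup_{I\ni i}S_I\subset\bigcap_{a\in A_i}N(\psi_i(a))$ and $\bigcup_{I\not\ni i}S_I\subset\bigcap_{b\in B_i}N(\psi_i(b))$; show $\sum_{\psi_i}N(\psi_i)$ is large enough — using that $e_i\to_i S$ already puts the relevant unions in $N$ of the two endpoints, and the extra vertices of $T_i$ each lose only a bounded multiplicative factor absorbed by the $2^{\ell+3}\alpha^{-1}$ in $C$ — to conclude some $\psi_i$ has $N(\psi_i)\ge(\alpha/2^{\ell+3})^{?}|Y|^s$, i.e. $\ge\alpha'_{i}|Y|^s$ for a constant $\alpha'_i$ that stays positive through all $\ell$ steps precisely because of the exponent $\ell+3$. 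Set $\mathcal S_i$ to be this surviving family. (4) After $\ell$ steps we have fixed embeddings $\psi_1,\ldots,\psi_\ell$ of $T_1,\ldots,T_\ell$ and a nonempty family $\mathcal S_\ell$ of functions $S$ each of which, together with the $\psi_i$'s, realises all the edges of $Z_\ell^{r,t}(T_1,\ldots,T_\ell)$; pick any $S\in\mathcal S_\ell$, and since $S$ is proper its $2^\ell+r-3$ sets are disjoint blow-up classes of size $t$ inside $Y$, disjoint from $X\supset\bigcup\psi_i$, giving the required subgraph. The one genuinely delicate point, and the one I would write out carefully, is the bookkeeping in step (3): making sure the per-tree loss is a fixed constant depending only on $\ell$ (not on $|Y|$ or on how many $S_I$'s are involved), so that the total loss over $\ell$ levels is still bounded below, which is exactly the role of the clean choice $C = 2^{\ell+3}\alpha^{-1}\sum_i|T_i|$.
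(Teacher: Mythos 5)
There is a genuine gap, and it sits exactly at your step (3). The averaging you propose has no lower bound available from the hypothesis: for a \emph{fixed} $S$, the definition of $(C,\alpha)$-richness supplies only the single designated edge $e_i$ (inside the nested tuple) that is known to satisfy $e_i\to_i S$. It says nothing about any other edge of $D(\tpl{e}{i-1})$, nor about any other vertex of $X$, being compatible with that same $S$; in particular the vertices of a tree embedding beyond the two endpoints of $e_i$ are arbitrary vertices of $X$, and richness places no constraint whatsoever on their adjacencies into $Y$. So the claim that ``each vertex of $T_i$ beyond the endpoints of $e_i$ only costs a bounded factor'' is unsubstantiated and false in general: the neighbourhood in $Y$ of such a vertex can be disjoint from $N(x)\cap Y$ for the relevant endpoint $x$ of $e_i$ (split $Y$ into two halves and attach $x$ to one, the extra vertex to the other), in which case \emph{no} $S\in\mathcal S_{i-1}$ survives, and $\sum_{\psi_i}N(\psi_i)$ can be $0$. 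The constant $C$ cannot repair this, because $C$ only controls degrees inside $X$, while the loss you need to bound concerns adjacencies from $X$ into $Y$; indeed in the paper the factor $2^{\ell+3}\alpha^{-1}$ in $C$ plays a different role (see below), not the one you assign to it. A second, structural problem is the order of quantifiers: the family $\{S:\tpl{e}{\ell}\to S\}$ of size $\ge\alpha|Y|^s$ exists only after \emph{all} $\ell$ edges of the nested tuple have been fixed, and the sets $D(\tpl{e}{i})$ depend on those choices, so one cannot ``maintain a surviving family $\mathcal S_{i-1}$'' level by level while still having the freedom to re-embed $T_i$ at level $i$.

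The paper's proof resolves precisely this difficulty by reversing your quantification. Lemma~\ref{q-induc} runs a double-counting induction up the nesting (from $q=\ell$ to $q=0$) to show that many functions $S$ are \emph{good}: for such an $S$ there exist edge sets $E_1,\ldots,E_\ell\subset E(X)$, each of average degree at least $2^{-\ell}\alpha C$, such that \emph{every} choice $e_1\in E_1,\ldots,e_\ell\in E_\ell$ satisfies $\tpl{e}{\ell}\to S$ (the key point being that an $S$ good for a $\beta$-fraction of the edges $e_{q+1}\in D(\tpl{e}{q})$ can take $E_{q+1}=W_S$, this set inheriting large average degree from $\delta(D(\tpl{e}{q}))>C$, and the deeper sets $E_{q+2},\ldots,E_\ell$ can be chosen independently of which $e_{q+1}\in W_S$ is used). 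Only then (Lemma~\ref{lem:denseembed}) are the trees embedded: inside each $E_j$ one passes to a bipartite subgraph with all edges consistently oriented according to which endpoint sees $\bigcup_{I\ni j}S_I$, retaining average degree at least $2\sum_i|T_i|$ --- this is what $C=2^{\ell+3}\alpha^{-1}\sum_i|T_i|$ is really for --- and embeds $T_j$ greedily and vertex-disjointly via Fact~\ref{prop:forest}; since every edge of $E_j$ is compatible with the \emph{same} $S$ in the correct orientation, every $A_j$-side vertex of the embedded tree automatically sees $\bigcup_{I\ni j}S_I$ and every $B_j$-side vertex sees $\bigcup_{I\not\ni j}S_I$. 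This ``one $S$, many compatible edges'' step is the missing ingredient your proposal would need before any per-tree averaging could get off the ground.
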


The proof of Proposition~\ref{lem:rich-H} uses a double counting argument
and proceeds by induction. We shall find a set of functions $\cS
\subset \cF^{r,t}_\ell(Y)$ such that, for each $S \in \cS$, we can
construct (one by one) a collection of subgraphs $E_1,\ldots,E_\ell$ of
$G[X]$ with the following properties: each subgraph has large average
degree, and for \emph{any} choice $e_1 \in E_1,\ldots,e_\ell \in E_\ell$,
we have $\tpl{e}{\ell} \to S$. Recall that this simply says that $e_i \to_i
S$ for every $e_i \in E_i$.

Let the graph $G$, disjoint subsets $X,Y \subset V(G)$, constants $\alpha > 0$
and $r, \ell, t \in \NATS$ with $r \ge 3$, and trees $T_1,\ldots,T_\ell$ be
fixed for the rest of the section. Set $C := 2^{\ell+3} \alpha^{-1}
\sum_{i=1}^{\ell} |T_i|$ and $s := (2^\ell + r - 3)t$, and let $0 \le q \le
\ell$. For our induction hypothesis we use the following definition.

\begin{definition}[Good function, $(C,\alpha)$-dense]\label{def:good}
  A function $S \in \cF_\ell^{r,t}(Y)$ is \emph{$(r,\ell,t,C,\alpha)$-good}
  for a tuple $\tpl{e}{q}$ and $(X,Y)$ if there exist
  sets $$E_{q+1},\ldots,E_\ell \subset E(X), \quad \textup{with} \quad
  \ol{d}(E_j) \ge 2^{-\ell} \alpha C \quad \textup{ for each $q + 1 \le j
    \le \ell$,}$$ such that for every $e_{q+1} \in E_{q+1}, \ldots, e_\ell
  \in E_\ell$, we have $\tpl{e}{\ell} \to S$.

  When the constants $(r,\ell,t,C,\alpha)$ and the sets $(X,Y)$ are clear
  from the context, we shall omit them. We shall abbreviate
  `$(r,\ell,t,C,\alpha)$-good for $\tpl{e}{0}$ and $(X,Y)$' to
  `$(r,\ell,t,C,\alpha)$-good for $(X,Y)$'.

  The pair $(X,Y)$ is \emph{$(C,\alpha)$-dense} in copies of $Z_\ell^{r,t}$
  if there exist at least $2^{-\ell} \alpha |Y|^s$ families $S \in \cF(Y)$
  which are $(r,\ell,t,C,\alpha)$-good for $(X,Y)$. 
\end{definition}

The next lemma constitutes the inductive argument in the proof of
Proposition~\ref{lem:rich-H}. The final assertion we shall also need in
Section~\ref{proofsec}.

\begin{lemma}\label{q-induc}
  For any $0\le q\le\ell$, if $(X,Y)$ is $(C,\alpha)$-rich in copies of
  $Z_\ell^{r,t}$, then
  \[\left| \big\{ S \in \cF_\ell^{r,t}(Y) \,:\, S \text{ is
      good for } \tpl{e}{q} \big\} \right| \, \ge \, 2^{q-\ell} \alpha
  |Y|^s\] for every $\tpl{e}{q} \in \cD_q(X,Y)$.

  In particular, if $(X,Y)$ is $(C,\alpha)$-rich in copies of
  $Z_\ell^{r,t}$, then $(X,Y)$ is $(C,\alpha)$-dense in copies of
  $Z_\ell^{r,t}$.
\end{lemma}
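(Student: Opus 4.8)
The plan is to prove the displayed inequality by downward induction on $q$, from $q=\ell$ to $q=0$; the ``in particular'' clause is then just the case $q=0$ combined with Definition~\ref{def:good}, since a function good for $\tpl{e}{0}$ is good for $(X,Y)$ and we need $2^{0-\ell}\alpha|Y|^s = 2^{-\ell}\alpha|Y|^s$ such functions, which is exactly $(C,\alpha)$-density.

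\emph{Base case $q=\ell$.} Here $\cD_\ell(X,Y)$ consists of tuples $\tpl{e}{\ell}$ with $e_j\in D(\tpl{e}{j-1})$ for all $j$, and for such a tuple property~\ref{def:rich:b} of $(C,\alpha)$-richness gives at least $\alpha|Y|^s$ functions $S$ with $\tpl{e}{\ell}\to S$. For each such $S$ we may take $E_{q+1},\dots,E_\ell$ to be the empty list of sets (there are no indices $j$ with $\ell+1\le j\le\ell$), so the condition in Definition~\ref{def:good} that ``for every $e_{\ell+1}\in E_{\ell+1},\dots$ we have $\tpl{e}{\ell}\to S$'' is vacuously the statement $\tpl{e}{\ell}\to S$ itself, which holds. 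Hence every such $S$ is good for $\tpl{e}{\ell}$, and $2^{\ell-\ell}\alpha|Y|^s = \alpha|Y|^s$, matching what we have.

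\emph{Inductive step.} Suppose the claim holds for $q$; I want it for $q-1$. Fix $\tpl{e}{q-1}\in\cD_{q-1}(X,Y)$. For each choice of $e_q\in D(\tpl{e}{q-1})$ we get a tuple $\tpl{e}{q}\in\cD_q(X,Y)$, so by the inductive hypothesis there are at least $2^{q-\ell}\alpha|Y|^s$ functions $S$ good for $\tpl{e}{q}$. I want to find $\ge 2^{q-1-\ell}\alpha|Y|^s$ functions $S$ good for $\tpl{e}{q-1}$. The idea is a double-counting / pigeonhole argument: count pairs $(e_q, S)$ with $e_q\in D(\tpl{e}{q-1})$ and $S$ good for $\tpl{e}{q}$, so at least $|D(\tpl{e}{q-1})|\cdot 2^{q-\ell}\alpha|Y|^s$ such pairs (bounding $|D|$ below via $\delta(D(\tpl{e}{q-1}))>C$, which gives $|D|\ge C$ and $\ol d(D)>C$). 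Reversing the order of summation, there must be a set $\cS$ of functions $S$, of size $\ge 2^{q-1-\ell}\alpha|Y|^s$, each of which is good for $\tpl{e}{q}$ for all $e_q$ in a subset $D_S\subset D(\tpl{e}{q-1})$ whose edge set is large relative to $D(\tpl{e}{q-1})$ — concretely with $e(D_S)\ge\tfrac12 e(D(\tpl{e}{q-1}))$, say, after absorbing a factor $2$. For each such $S$, Fact-type averaging gives a subgraph of $G[D_S]$ with average degree $\ge\tfrac14\ol d(D(\tpl{e}{q-1})) > \tfrac14 C$; but I need average degree $\ge 2^{-\ell}\alpha C$, which follows since $\alpha\le 1$ and $2^{-\ell}\le\tfrac14$ for $\ell\ge 2$ (and the small cases $\ell=0,1$ are handled directly). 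Call this subgraph's edge set $E_q$. Then for $S\in\cS$ the witnessing sets are $E_q$ together with the sets $E_{q+1},\dots,E_\ell$ that witnessed goodness of $S$ for $\tpl{e}{q}$ — but here is the subtlety: goodness of $S$ for $\tpl{e}{q}$ supplies $E_{q+1},\dots,E_\ell$ that may depend on $e_q$. I need a \emph{single} choice of $E_{q+1},\dots,E_\ell$ working for all $e_q\in E_q$. This is resolved by a further pigeonhole: for fixed $S$ and varying $e_q\in D_S$ there are only boundedly many (in fact at most one, up to shrinking) ``types'' of witness, or more robustly one re-runs the counting so that the witnessing sets $E_{q+1},\dots,E_\ell$ are fixed before choosing $S$; I would restructure the counting of pairs as counting of tuples $(e_q,S,E_{q+1},\dots,E_\ell)$ and pigeonhole on the last $\ell-q$ coordinates jointly, paying a further factor that the $2^{-\ell}$ in the density bound is designed to absorb. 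Once $E_q,\dots,E_\ell$ are fixed with all average degrees $\ge 2^{-\ell}\alpha C$, and any $e_q\in E_q,\dots,e_\ell\in E_\ell$ gives $e_q\to_q S$ (by the defining property of $D_S$ and the fact that $e_q\to_q S$ is a property of $e_q$ and $S$ alone) together with $e_{q+1}\to_{q+1}S,\dots,e_\ell\to_\ell S$, we conclude $\tpl{e}{\ell}\to S$, so $S$ is good for $\tpl{e}{q-1}$. This completes the induction.

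The step I expect to be the main obstacle is exactly this last point: ensuring the witnessing sets $E_{q+1},\dots,E_\ell$ can be chosen \emph{uniformly} over $e_q\in E_q$, rather than depending on $e_q$, while simultaneously keeping the count of good functions above $2^{q-1-\ell}\alpha|Y|^s$ and all average degrees above $2^{-\ell}\alpha C$. The bookkeeping of the nested pigeonhole — and verifying that the definition of $C=2^{\ell+3}\alpha^{-1}\sum|T_i|$ is exactly what makes $2^{-\ell}\alpha C$ large enough to survive $\ell$ iterations of halving the average degree — is where the real work lies; the rest is the standard ``rich implies dense'' unwinding.
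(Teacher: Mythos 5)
Your skeleton (downward induction on $q$, base case $q=\ell$ from property~\ref{def:rich:b}, double counting of pairs $(e_q,S)$ in the step) is the same as the paper's, but the step you yourself flag as the main obstacle is not resolved, and your proposed resolution would fail. Pigeonholing over the witness tuples $(E_{q+1},\dots,E_\ell)$ cannot work: these are arbitrary subsets of $E(X)$, so the number of candidate tuples grows like $2^{(\ell-q)e(G[X])}$, i.e.\ with $n$, and any pigeonhole over them costs a factor depending on $n$ which the constant $2^{-\ell}$ cannot absorb; moreover that $2^{-\ell}$ budget is already fully spent on the $\ell$ halvings of the count of good functions across the induction. The actual resolution needs no further counting at all: since $\tpl{e}{\ell}\to S$ means precisely that $e_i\to_i S$ for every $i$, and $e\to_i S$ is a property of the single edge $e$ and $S$ alone, goodness of $S$ for $\tpl{e}{q}$ (in your indexing) is equivalent to ``$\tpl{e}{q}\to S$, and there exist $E_{q+1},\dots,E_\ell$ with $\ol{d}(E_j)\ge 2^{-\ell}\alpha C$ such that $e\to_j S$ for every $e\in E_j$''; the second half does not mention $e_q$. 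Hence a witness collection valid for \emph{one} $e_q\in W_S$ is automatically valid for \emph{all} $e_q$ with $e_q\to_q S$, in particular for all of $W_S$, and one simply sets $E_q:=W_S$ and reuses the inherited $E_{q+1},\dots,E_\ell$ unchanged. This coordinate-wise decomposition is exactly the observation the paper singles out as crucial; your remark that there is ``at most one type of witness, up to shrinking'' points vaguely in this direction but is not established, and without it the induction does not close.

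There is also a quantitative error in your pigeonhole. From at least $|D(\tpl{e}{q-1})|\cdot 2^{q-\ell}\alpha|Y|^s$ pairs and at most $|Y|^s$ functions in $\cF_\ell^{r,t}(Y)$, demanding $|D_S|\ge\tfrac12|D(\tpl{e}{q-1})|$ yields only $\big(2^{q-\ell}\alpha-\tfrac12\big)|Y|^s$ functions, which is vacuous for small $\alpha$; you cannot have both that threshold and $2^{q-1-\ell}\alpha|Y|^s$ functions. The correct threshold is $|W_S|\ge 2^{q-1-\ell}\alpha|D(\tpl{e}{q-1})|$, which does leave at least $2^{q-1-\ell}\alpha|Y|^s$ functions, and then the minimum-degree condition $\delta\big(D(\tpl{e}{q-1})\big)>C$ gives $\ol{d}(W_S)\ge 2^{q-1-\ell}\alpha\,\ol{d}\big(D(\tpl{e}{q-1})\big)\ge 2^{-\ell}\alpha C$ directly, with no passage through ``$\tfrac14 C$'', no case split on small $\ell$, and no bipartiteness or maximal-subgraph extraction. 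Relatedly, the specific value $C=2^{\ell+3}\alpha^{-1}\sum_i|T_i|$ plays no role in this lemma (here $C$ is arbitrary); it is only needed later, in Lemma~\ref{lem:denseembed}, to embed the trees.
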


\begin{proof}
  The proof is by induction on $\ell - q$. The base case, $q = \ell$,
  follows immediately from the definition of $(C,\alpha)$-rich. Indeed, a
  function $S$ is good for $\tpl{e}{\ell}$ if and only if $\tpl{e}{\ell}
  \to S$, and by Property~\ref{def:rich:b} in Definition~\ref{def:rich}, we
  have $\left| \big\{ S \in \cF_\ell^{r,t}(Y) \,:\, \tpl{e}{\ell} \to S
    \big\} \right| \ge \alpha |Y|^s$.

  So let $0 \le q < \ell$, assume that the lemma holds for $q + 1$, and let
  $\tpl{e}{q} \in \cD_q(X,Y)$. Set $\beta=\alpha 2^{q-\ell}$. By
  Definition~\ref{def:rich}, there exists a set $D(\tpl{e}{q}) \subset
  E(X)$ with $\delta\big( D(\tpl{e}{q}) \big) > C$, such that $\tpl{e}{q+1}
  \in \cD_{q+1}(X,Y)$ for every $e_{q+1} \in D(\tpl{e}{q})$. Thus, by the
  induction hypothesis,
  \begin{equation}\label{eq:goodS}
    \forall e_{q+1} \in D(\tpl{e}{q})\text{ at least } 2 \beta
    |Y|^s\text{ functions }S \in \cF_\ell^{r,t}(Y)\text{ are good for }
    \tpl{e}{q+1}\,,
  \end{equation}
  that is, for each such $S$ there exist sets \[E_{q+2},\ldots,E_\ell
  \subset E(X), \quad \textup{with} \quad \ol{d}(E_j) \ge 2^{-\ell} \alpha
  C \quad \textup{ for each $q + 2 \le j \le \ell$,}\] such that for every
  $e_{q+2} \in E_{q+2}, \ldots, e_\ell \in E_\ell$, we have $\tpl{e}{\ell}
  \to S$.
  It is crucial to observe that given $S$, if the edge sets
  $E_{q+2},\ldots,E_\ell$ have this last property for \emph{some}
  $e_{q+1}\in D(\tpl{e}{q})$ with $\tpl{e}{q+1}\to S$, then
  $E_{q+2},\ldots,E_{\ell}$ have this property for \emph{all} $e_{q+1}\in
  D(\tpl{e}{q})$ with $\tpl{e}{q+1}\to S$.

  The $S \in \cF_\ell^{r,t}(Y)$ that will be good for $\tpl{e}{q}$ are
  those which are good for many $\tpl{e}{q+1}$. More precisely, for each $S
  \in \cF_\ell^{r,t}(Y)$, let $$W_S \, := \, \big\{ e_{q+1} \in
  D(\tpl{e}{q}) \,:\, S \textup{ is good for } \tpl{e}{q+1} \big\},$$ and
  let $Z = \big\{ S \in \cF_\ell^{r,t}(Y) \,:\, |W_S| \ge \beta
  |D(\tpl{e}{q})| \big\}$. By~\eqref{eq:goodS} the number of pairs
  $(e_{q+1},S)$ with $e_{q+1}$ in $W_S$ is at least $|D(\tpl{e}{q})| \cdot
  2 \beta |Y|^s$. On the other hand, for every $S\in Z$ there are at most
  $|D(\tpl{e}{q})|$ pairs $(e_{q+1},S)$ with $e_{q+1}$ in $W_S$, and for
  every $S\in\cF_\ell^{r,t}(Y)\setminus Z$, there are (by definition of
  $Z$) at most $\beta|D(\tpl{e}{q})|$ such pairs. Putting these together,
  we obtain
  \[|D(\tpl{e}{q})|\cdot 2\beta |Y|^s\le |D(\tpl{e}{q})||Z|+\beta
  |D(\tpl{e}{q})||Y|^s\]
  and hence $|Z| \ge \beta |Y|^s$.

  We claim that every $S \in Z$ is good for $\tpl{e}{q}$. Indeed, fix $S\in
  Z$.  Set $E_{q+1} = W_S$, and let $E_{q+2},\ldots,E_\ell$ be the sets
  defined above (for any, and thus all, $e_{q+1}\in E_{q+1}$), i.e., those
  obtained by the induction hypothesis. Since $S \in Z$ we have $|W_S| \ge
  \beta |D(\tpl{e}{q})|$, so it follows from $\delta\big( D(\tpl{e}{q})
  \big) > C$ that $\ol{d}(E_{q+1}) \ge \beta C \ge 2^{-\ell} \alpha 
  C$. Since $S$ is good for $\tpl{e}{q+1}$ for every $e_{q+1} \in E_{q+1}$,
  we have $e_i \to_i S$ for every $1 \le i \le q+1$, and by the induction
  hypothesis, we have $e_i \to_i S$ for every $e_i \in E_i$ and every $q+2
  \le i \le \ell$.  Thus $\tpl{e}{\ell} \to S$ for every such
  $\tpl{e}{\ell}$, as required. Since $|Z| \ge \beta |Y|^s$, this completes
  the induction step, and hence the proof of the lemma.
\end{proof}

Lemma~\ref{q-induc} shows that richness in copies of $Z_\ell^{r,t}$
implies denseness in copies of $Z_\ell^{r,t}$.  Observe that if $(X,Y)$ is
dense in copies of $Z_\ell^{r,t}$, then in particular there is a function
$S\in\cF_\ell^{r,t}(Y)$ which is good for $(X,Y)$. The next lemma now shows
that in this case we have $Z_\ell^{r,t}(T_1,\ldots,T_\ell)\subset G$.

\begin{lemma}\label{lem:denseembed} 
Let $X$ and $Y$ be disjoint vertex sets
  in $G$. Given $r,\ell,t\in\NATS$, $\alpha>0$, and trees $T_1,\ldots,T_\ell$,
  if $C\ge 2^{\ell+3}\alpha^{-1}\sum_{i=1}^\ell |T_i|$ and
  $S\in\cF_\ell^{r,t}(Y)$ is $(r,\ell,t,C,\alpha)$-good for $(X,Y)$, then
  $Z_\ell^{r,t}(T_1,\ldots,T_\ell)\subset G$.
\end{lemma}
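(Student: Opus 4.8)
The plan is to build an explicit copy of $Z_\ell^{r,t}(T_1,\ldots,T_\ell)$ inside $G$. Since $S\in\cF_\ell^{r,t}(Y)$ is good for $(X,Y)$, i.e.\ for $\tpl{e}{0}$, Definition~\ref{def:good} provides edge sets $E_1,\ldots,E_\ell\subset E(X)$ with $\ol{d}(E_j)\ge 2^{-\ell}\alpha C$ and such that $e_j\to_j S$ for every $j\in[\ell]$ whenever $e_1\in E_1,\ldots,e_\ell\in E_\ell$; in particular every single edge $e$ of $E_j$ satisfies $e\to_j S$. Writing $P_j:=\bigcap_{j\in I}N(S_I)\cap\bigcap_{k}N(S'_k)$ and $Q_j:=\bigcap_{j\notin I}N(S_I)\cap\bigcap_{k}N(S'_k)$ (intersections over $I\subset[\ell]$ and $k\in[r-3]$), unwinding the definition of $\to_j$ shows that each edge $xy\in E_j$ satisfies either $x\in P_j,\ y\in Q_j$ or $x\in Q_j,\ y\in P_j$, and in either case both endpoints lie in $\bigcap_k N(S'_k)$. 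I will map the blow-up $S_I$ of $u_I$ and the blow-up $S'_k$ of $w_k$ to the corresponding $t$-sets of the proper function $S$ — so the $S_I$--$S'_k$ and $S'_k$--$S'_{k'}$ adjacencies hold by properness of $S$ — and embed each $T_j$ with the colour class associated to $P_j$ sent into $P_j$ and the other into $Q_j$; then the $S'_k$--$T_j$ edges hold because a vertex of $T_j$ lands in $P_j\cup Q_j\subset\bigcap_k N(S'_k)$, and the $S_I$--$T_j$ edges hold by the definitions of $P_j$ (for $j\in I$) and $Q_j$ (for $j\notin I$). Disjointness between the trees (which lie in $X$) and the $S$-images (which lie in $Y$) is automatic since $X\cap Y=\emptyset$.

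The core task is therefore: for $j=1,\ldots,\ell$ in turn, find a copy of $T_j$ in $E_j$ whose two colour classes split correctly with respect to $P_j$ and $Q_j$, all $\ell$ copies being pairwise disjoint. I expect the main difficulty to be exactly this ``orientation'' issue: a given edge of $E_j$ points the right way in only one of its two orientations, and these need not be consistent around $T_j$ because the vertices of $M_j:=P_j\cap Q_j$ are ambivalent. The resolution is a random split. Since $\ol{d}(E_j)\ge 2^{-\ell}\alpha C\ge 8\sum_i|T_i|$, pass to a subgraph $F_j\subset G[E_j]$ with $\delta(F_j)\ge 4\sum_i|T_i|$; split $M_j\cap V(F_j)$ uniformly at random into $M_j^L\dcup M_j^R$; and set $L_j:=\big((P_j\setminus Q_j)\cup M_j^L\big)\cap V(F_j)$ and $R_j:=\big((Q_j\setminus P_j)\cup M_j^R\big)\cap V(F_j)$. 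A short check over the possible types of an edge $xy\in F_j$ (endpoints pure in $P_j$, pure in $Q_j$, or in $M_j$) shows that $xy$ joins $L_j$ to $R_j$ with probability at least $\tfrac12$; so some outcome gives a bipartite graph $F_j[L_j,R_j]$ with at least $\tfrac12 e(F_j)\ge\sum_i|T_i|\cdot v(F_j)$ edges, hence of average degree at least $2\sum_i|T_i|$.

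It remains to embed $T_j$ into $F_j[L_j,R_j]$ avoiding the at most $\sum_{i<j}|T_i|$ vertices already used by $T_1,\ldots,T_{j-1}$. By Fact~\ref{prop:forest} (or rather its proof), $F_j[L_j,R_j]$ contains a subgraph of minimum degree at least $\sum_i|T_i|$, into which we embed $T_j$ greedily in a BFS order from an arbitrary root: when placing a vertex we choose its image among the neighbours of its already-placed parent, discarding the at most $\sum_{i<j}|T_i|+|T_j|-1<\sum_i|T_i|$ forbidden vertices, which still leaves at least one choice — this is precisely the bookkeeping that the value $C=2^{\ell+3}\alpha^{-1}\sum_i|T_i|$ is tuned to support. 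Since $F_j[L_j,R_j]$ is bipartite with parts $L_j\subset P_j$ and $R_j\subset Q_j$, the two colour classes of the resulting copy land in $P_j$ and in $Q_j$; and we may take the class in $L_j$ to be the image of $A_j$, because $Z_\ell^{r,t}(T_1,\ldots,T_\ell)$ is unchanged up to isomorphism if we swap $A_j$ with $B_j$ (relabel the $S_I$'s by $I\mapsto I\triangle\{j\}$). Assembling the $\ell$ disjoint trees with the images of all $S_I$ and $S'_k$ produces the required copy of $Z_\ell^{r,t}(T_1,\ldots,T_\ell)$ in $G$.
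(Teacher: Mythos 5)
Your proof is correct and follows essentially the same route as the paper: extract the edge sets $E_1,\ldots,E_\ell$ from the goodness of $S$, fix the orientation ambiguity by passing to a consistently-oriented bipartite subgraph of each $E_j$ at the cost of a constant factor in density, embed the trees $T_1,\ldots,T_\ell$ greedily (via Fact~\ref{prop:forest}) while avoiding previously used vertices, and read off all adjacencies to the sets $S_I$ and $S'_k$ from the definition of $\to_j$ and the properness of $S$. The only difference is cosmetic: the paper obtains the bipartite subgraph $E'_j$ by taking a maximal bipartite subgraph of $E_j$ and discarding the minority orientation, whereas you split the ambivalent vertices of $P_j\cap Q_j$ by a random bisection -- both lose only a bounded factor, which the choice of $C$ absorbs.
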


\begin{proof} Let $S$ be $(r,\ell,t,C,\alpha)$-good for $(X,Y)$. Then there
  exist sets
  \[E_{1},\ldots,E_\ell \subset E(X), \quad \textup{with}  \quad \ol{d}(E_j) \ge
  2^{-\ell} \alpha C \quad \textup{ for each $1 \le j \le \ell$,}\]
  such that for every $e_{1} \in E_{1}, \ldots, e_\ell \in E_\ell$, we have
  $\tpl{e}{\ell} \to S$.
  
  For each $j \in [\ell]$ and each edge $e \in E_j$, let $e=xy$ be such that
  $(x,y) \to_j S$, and orient the edge~$e$ from~$x$ to~$y$. Recall that $C \ge
  2^{\ell+3} \alpha^{-1} \sum_{i=1}^{\ell} |T_i|$, and so $\ol{d}(E_j) \ge 8 \sum_{i=1}^{\ell} |T_i|$ for
  each $j \in [\ell]$. For each $j\in[\ell]$, by choosing a maximal bipartite
  subgraph of $E_j$, and then removing at most half the edges, we can find a set
  $E'_j\subset E_j$ such that,
  \begin{enumerate}[label=\abc]
    \item $E'_j$ is bipartite, with bipartition $(A_j,B_j)$,
    \item every edge $e \in E'_j$ is oriented from $A_j$ to $B_j$, and
    \item $\smash{\ol{d}(E'_j) \ge 2 \sum_{i=1}^{\ell} |T_i|}$.
  \end{enumerate}
  Thus, by Fact~\ref{prop:forest}, there exists, for each $j \in [\ell]$, a copy
  $T'_j$ of $T_j$ in $$E'_j \,-\, \big( V(T'_1) \cup \ldots \cup V(T'_{j-1})
  \big),$$ since removing a vertex can only decrease the average degree by at most
  two. These trees, together with $S$, form a copy of
  $Z_\ell^{r,t}(T_1,\ldots,T_\ell)$ in $G$, so we are done.
\end{proof}

It is now easy to deduce Proposition~\ref{lem:rich-H} from
Lemma~\ref{q-induc} and Lemma~\ref{lem:denseembed}.

\begin{proof}[Proof of Proposition~\ref{lem:rich-H}] Let $(X,Y)$ be
  $(C,\alpha)$-rich in copies of $Z_\ell^{r,t}$, and apply Lemma~\ref{q-induc} to
  $(X,Y)$ with $q = 0$. Note that $ \cD_0(X,Y) = \{\tpl{e}{0} \}$ consists of the
  tuple of length zero, and let $\cS = \big\{ S \in \cF_\ell^{r,t}(Y) \,:\,
  S \textup{ is good for } \tpl{e}{0} \big\}$. Then $|\cS| \ge \alpha 2^{-\ell} |Y|^s$, and so
  in particular $\cS$ is non-empty. Let $S\in\cS$, and apply
  Lemma~\ref{lem:denseembed} to obtain a copy of
  $Z_\ell^{r,t}(T_1,\ldots,T_\ell)$ in~$G$.
\end{proof}

\section{The paired VC-dimension argument}\label{VCsec}

In this section we shall modify and extend a technique which was introduced by
{\L}uczak and Thomass\'e~\cite{LT}, and used by them to prove
Conjecture~\ref{LTconj} in the case where $H$ is near-bipartite. This technique
is based on the concept of
\emph{paired VC-dimension}, which generalises the well-known
Vapnik-\v{C}ervonenkis dimension of a set-system (see~\cite{Sauer,VC}).
We shall not state our proof in the
abstract setting of paired VC-dimension, which is more general than that which
we shall require, but we refer the interested reader to~\cite{LT} for the
definition and further details.

We shall use the paired VC-dimension (or `booster tree') argument of {\L}uczak
and Thomass\'e in order to prove the following result, which may be thought of
as a `counting version' of Theorem~5 in~\cite{LT}. The case $r = 3$ of
Theorem~\ref{mainthm} will follow as an easy consequence of
Propositions~\ref{lem:VC} and~\ref{lem:rich-H} (see
Section~\ref{proofsec}).

\begin{prop}\label{lem:VC}
  For every $\ell,t \in \NATS$ and $d > 0$, there exists $\alpha > 0$ such
  that, for every $C \in \NATS$, there exists $C' \in \NATS$ such that the following holds.
  Let $G$ be a graph and let $X$ and $Y$ be disjoint subsets of $V(G)$, such that
  $|N(x) \cap Y| \ge d|Y|$ for every $x \in X$.
  
  Then either $\chi\big( G[X] \big) \le C'$, or $(X,Y)$ is $(C,\alpha)$-rich in
  copies of $Z_\ell^{3,t}$.
\end{prop}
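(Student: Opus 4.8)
The plan is to run the booster-tree argument of {\L}uczak and Thomass\'e, but to keep track of densities everywhere so that the output is a $(C,\alpha)$-rich structure rather than just a single copy of a Zykov graph. First I would set up the right quantities: starting from $X$ and $Y$ with $|N(x)\cap Y|\ge d|Y|$ for all $x\in X$, I would, for each vertex $x\in X$, record the $t$-sets $T\subset N(x)\cap Y$ (there are $\ge (d/2)^t\binom{|Y|}{t}$ of them once $|Y|$ is large). The `paired' version looks at pairs, or more precisely ordered pairs $(x,y)$ of vertices in $X$: for such a pair one considers how the families of $t$-subsets of $N(x)\cap Y$ and of $N(y)\cap Y$ interact. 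The key finite-VC-type fact (the analogue of the Sauer–Shelah bound used in~\cite{LT}) is that if $G[X]$ has large chromatic number — larger than some threshold depending on $\ell$, $t$, $d$ — then one cannot have a `shattering' obstruction, and consequently there is a positive-density set of edges $e_1\in E(X)$, and for each of them a positive-density set of edges $e_2$, and so on, such that after $\ell$ steps the accumulated constraints on the $t$-subsets of $Y$ still leave a positive-density family of proper functions $S\in\cF_\ell^{3,t}(Y)$ with $\tpl{e}{\ell}\to S$. In other words, the recursive $\exists D\,\forall e_1\,\exists D(\tpl e1)\dots$ structure of Definition~\ref{def:rich} is exactly what the booster-tree iteration produces; what is new here is only that at each level we must extract a \emph{high-minimum-degree} subgraph (not merely a nonempty set) of eligible edges, which is harmless because a graph with many edges on few vertices contains a subgraph of large minimum degree, and the number of `bad' edges we must discard at each level is controlled by a pigeonhole count.

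Concretely I would carry out the following steps. Step~1: reduce to the case where $G[X]$ has chromatic number larger than some $C'=C'(\ell,t,d,C)$, to be determined; otherwise we are in the first alternative and done. Step~2: fix the constant $\alpha=\alpha(\ell,t,d)>0$ at the outset — it will come out of the VC/Sauer–Shelah counting and should be taken of the form $(d/2)^{O(\ell t 2^\ell)}$ or similar, independent of $C$. Step~3: run the booster-tree induction on $i=0,1,\dots,\ell$. At stage $i$, having already chosen $e_1,\dots,e_{i}$ (from sets of controlled minimum degree) and retaining a density-$\ge\alpha_i$ family of $t$-subsets of $Y$ compatible with the partial constraints, one uses the hypothesis $\chi(G[X])>C'$ together with a VC-dimension argument to find a set $D(\tpl{e}{i})\subset E(X)$ of edges each of which, when added as $e_{i+1}$, `boosts' — i.e. refines the family of compatible $t$-subsets while keeping its density at least $\alpha_{i+1}$, where the $\alpha_i$ decrease by a bounded multiplicative factor. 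Here the role of high chromatic number is: if no such boosting edge existed in a subgraph of minimum degree $>C$, one could properly colour $G[X]$ with few colours — this is the contrapositive packaged as a colouring bound, which is where $C'$ is forced to be large in terms of $C$. Step~4: after $\ell$ rounds, Property~\ref{def:rich:b} holds with $\alpha_\ell\ge\alpha$, and Property~\ref{def:rich:a} holds because at each round we passed to a subgraph of minimum degree $>C$; hence $(X,Y)$ is $(C,\alpha)$-rich in copies of $Z_\ell^{3,t}$.

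The main obstacle — and the heart of the argument inherited from~\cite{LT} — is Step~3: making precise the VC/shattering dichotomy so that ``many pairs of vertices of $X$ have incomparable neighbourhood-trace families'' translates into ``a robust, iterable, density-preserving branching structure on edges of $X$,'' and simultaneously arranging that the failure of this structure yields a bounded proper colouring of $G[X]$ with the number of colours controlled by $C$ (so that $C'$ can be chosen as a function of $C$, $\ell$, $t$, $d$). The bookkeeping of the minimum-degree condition in~\ref{def:rich:a} at every level is routine by comparison: whenever we have a set of $\ge c\cdot|X|^2$ edges on $X$ (for a constant $c$ depending only on $\alpha$ and $C$), repeatedly deleting vertices of degree $\le C$ leaves a nonempty subgraph of minimum degree $>C$, and one checks that such a heavy edge set survives the pigeonholing at each round. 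I expect the only genuinely delicate point to be matching the order of quantifiers in Definition~\ref{def:rich} — the ``$\exists D\ \forall e_1\ \exists D(\tpl e1)\dots$'' pattern — with the way the booster-tree iteration discovers edges, but since that iteration is itself naturally of this nested form, this should go through.
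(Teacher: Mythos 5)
Your proposal defers precisely the step that constitutes the paper's actual proof. Your Step~3 asserts that ``the hypothesis $\chi(G[X])>C'$ together with a VC-dimension argument'' produces, at every stage, a min-degree-$>C$ set of edges whose addition preserves a positive density of compatible structures, and that the failure of this ``is the contrapositive packaged as a colouring bound''; you then explicitly flag this dichotomy as the main obstacle without supplying it. But this is where all the work lies, and the mechanism is not a one-step contrapositive. The paper first defines $(p,C,\eps,\beta)$-boosters and booster trees (Definitions~\ref{def:boost} and~\ref{def:boostree}) and shows (Lemma~\ref{exists-booster}) that $(X,Y)$ always has a booster tree $\cT$ of bounded size: each non-leaf splits its $X'$ into a $C$-degenerate part and at most $2^\ell$ parts each $\eps$-boosted by a part of $Y'$ of size $\ge\beta|Y'|$, and the tree has bounded depth because $d(x,Y')$ grows geometrically along branches while staying at most $1$. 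The colouring bound comes only from the global structure: if \emph{all} leaves are degenerate, then $X$ is partitioned into at most $|\cT|$ pieces each inducing a $C$-degenerate graph, so $\chi(G[X])\le|\cT|(C+1)=:C'$. Otherwise some leaf $(X',Y')$ admits no booster at all, and it is this non-existence of a booster --- not the chromatic number --- that drives the $\ell$-step nested construction (Lemma~\ref{no-boost-tech}): at step $q$ the vertices of $X'$ not $\eps$-boosted by the current partition $\{S_I(\tpl{e}{q-1})\}\cup\{R\}$ of $Y'$ form a non-$C$-degenerate set (else they would complete a booster), giving $D(\tpl{e}{q-1})$ with minimum degree $>C$, and non-boostedness of both endpoints of each chosen edge guarantees every $S_I$ retains a proportional share of their neighbourhoods, so the sets can be shrunk and kept disjoint. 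None of this tree/partition/termination machinery appears in your plan, and without it there is no route from ``the branching step fails'' to ``$G[X]$ has bounded chromatic number.''

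The bookkeeping you do commit to is also based on a false premise: you propose to extract the min-degree-$>C$ subgraphs from edge sets of size $\ge c|X|^2$, but large chromatic number of $G[X]$ gives no edge density of order $|X|^2$ (all edges may live on a tiny fraction of $X$), and nothing in the iteration supplies such a bound; correspondingly your claim of a ``positive-density set of edges $e_1$'' at the first level is not something the hypothesis can deliver. The right tool is degeneracy: a graph that is not $C$-degenerate contains a subgraph of minimum degree $>C$, and the booster definition is rigged exactly so that failure produces non-degeneracy. Two further points you would need to handle: richness is first obtained for the sub-pair $(X',Y')$ at the non-degenerate leaf and then transferred to $(X,Y)$ using $|Y'|\ge\beta^{|\cT|}|Y|$ (which costs a factor in $\alpha$ but leaves it independent of $C$, as required by the quantifiers); and the counting in property~\ref{def:rich:b} is not carried through the induction as a density of proper functions in $\cF_\ell^{3,t}(Y)$, as you suggest, but obtained at the very end from a single family of large sets of size $(d/4)^\ell|Y|$ (the $(C,\alpha,\ell)$-Zykov property of Definition~\ref{def:CaZykov}) by choosing $t$-subsets --- a formulation that makes the disjointness and the pigeonhole steps painless, whereas maintaining a positive density of full $2^{[\ell]}$-indexed families at every step is substantially more delicate.
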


In order to prove Proposition~\ref{lem:VC}, we shall break $X$ up into a bounded
number of suitable pieces, $X_1,\ldots,X_m$, and show that either $G[X_j]$ has
bounded chromatic number, or $(X_j,Y)$ is $(C,\alpha)$-rich in copies of $Z_\ell^{3,t}$.

Let $d(x,Y):=d\big(\{x\},Y\big) = \big|N(x)\cap Y\big|/|Y|=
e\big(\{x\},Y\big)/|Y|$ be the density of the neighbours of $x$ in $Y$. The key definition, which will allow us to choose the
sets $X_j$, is as follows.

\begin{definition}[Boosters]\label{def:boost}
  Let $G$ be a graph, let $X$ and $Y$ be disjoint subsets of $V(G)$, and let $\eps
  > 0$. We say that $x \in X$ is \emph{$\eps$-boosted} by $Y' \subset Y$ if
  $d(x,Y') \ge (1 + \eps) d(x,Y)$.
  
  Now let $C,p \in \NATS$, and let $\beta > 0$. Let $Y_1 \dcup
  \ldots \dcup Y_p$ of $Y$ be a partition of $Y$, and $X_0\dcup\ldots\dcup X_p$ be
  a partition of $X$. We say that $(X_0,\emptyset),(X_1,Y_1),\ldots,(X_p,Y_p)$ is
  a \emph{$(p,C,\eps,\beta)$-booster} of $(X,Y)$ if
  \begin{enumerate}[label=\abc]
    \item\label{def:boost:a} $G[X_0]$ is $C$-degenerate.
    \item\label{def:boost:b} Every $x \in X_j$ is $\eps$-boosted by $Y_j$, for
    each $j \in [p]$.
    \item\label{def:boost:c} $|Y_j| \ge \beta |Y|$ for every $j \in [p]$.
  \end{enumerate}
  We say that a partition $\{Y_1,\ldots,Y_p\}$ of $Y$ induces a
  $(p,C,\eps,\beta)$-booster of $(X,Y)$ if
  there exists a partition $X_0\dcup\ldots\dcup X_p$ of $X$ such that
  $(X_0,\emptyset),(X_1,Y_1),\ldots,(X_p,Y_p)$ is a
  \emph{$(p,C,\eps,\beta)$-booster} of $(X,Y)$.
\end{definition}

We remark that this is slightly different from the definition of a $p$-booster
in Section~5 of~\cite{LT}, where Condition~\ref{def:boost:a} was replaced by `$G[X_0]$ is
independent', and Condition~\ref{def:boost:c} was missing.

Using Definition~\ref{def:boost}, we can now state the second key definition.

\begin{definition}[Booster trees]\label{def:boostree}
  Let $C,p_0 \in \NATS$ and $\beta,\eps > 0$. A
  \emph{$(p_0,C,\eps,\beta)$-booster tree} for $(X,Y)$ is an oriented rooted tree $\cT$, 
  whose vertices are pairs $(X',Y')$ such that $X' \subset X$
  and $Y' \subset Y$, and all of whose edges are oriented away from the
  root, such that the following conditions hold:
  \begin{enumerate}[label=\abc]
    \item\label{def:boostree:a} The root of $\mathcal{T}$ is $(X,Y)$.
    \item\label{def:boostree:b} No vertex of $\mathcal{T}$ has more than $p_0$
    out-neighbours.
    \item\label{def:boostree:c} The out-neighbourhood of each non-leaf $(X',Y')$
    of $\mathcal{T}$ forms a $(p,C,\eps,\beta)$-booster of $(X',Y')$ for some
    $p\le p_0$.
    \item\label{def:boostree:d} If $(X',Y')$ is a leaf of $\cT$, then either
    $G[X']$ is $C$-degenerate, or there does not exist a
    $(p,C,\eps,\beta)$-booster for $(X',Y')$ for any $p \le p_0$.
  \end{enumerate}
  A vertex $(X',Y')$ of $\mathcal{T}$ is called $\emph{degenerate}$ if it is a
  leaf of $\mathcal{T}$ and $G[X']$ is $C$-degenerate.
\end{definition}

The following lemma is immediate from the definitions.

\begin{lemma}\label{exists-booster}
  Let $C,p_0 \in \NATS$, and $\beta,\eps > 0$, let $G$ be a graph, and let $X$ and $Y$ be disjoint subsets of $V(G)$. If $d(x,Y) \ge d$ for every $x\in X$, then there exists a $(p_0,C,\eps,\beta)$-booster tree
  $\cT$ for $(X,Y)$ such that $|\cT|$ is bounded as a function of $\eps$, $d$ and~$p_0$.
  
  Moreover, if  $(X',Y')$ is a non-degenerate vertex of $\mathcal{T}$, then $d(x,Y')\ge d$ for all
  $x\in X'$, and $|Y'| \ge \beta^{|\cT|}|Y|$. 
\end{lemma}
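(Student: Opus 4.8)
The plan is to build $\cT$ greedily from the root and bound its size by separating the (bounded) branching from the (bounded) depth, the latter being where the hypothesis $d(x,Y)\ge d$ enters. The construction is the obvious one: given a pair $(X',Y')$ already placed in $\cT$, if $G[X']$ is $C$-degenerate make $(X',Y')$ a leaf; otherwise, if there is no $(p,C,\eps,\beta)$-booster of $(X',Y')$ with $p\le p_0$, again make it a leaf; otherwise fix such a booster $(X_0,\emptyset),(X_1,Y_1),\ldots,(X_p,Y_p)$ (necessarily with $1\le p$, since $p=0$ would force $G[X']$ to be $C$-degenerate) and take these $p+1$ pairs to be the out-neighbours of $(X',Y')$. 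Started at the root $(X,Y)$, this produces a rooted tree for which conditions (a)--(d) of Definition~\ref{def:boostree} are immediate from the construction and the stopping rule; it remains only to check that the process terminates after boundedly many levels.

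The key observation is that each ``boosting step'' --- each edge of $\cT$ from a pair $(X',Y')$ to a child $(X_j,Y_j)$ with $j\ge 1$ --- multiplies the neighbourhood density of every vertex it retains by at least $1+\eps$. Indeed, by Definition~\ref{def:boost} the sets $X_0,\ldots,X_p$ partition $X'$ and $Y_1,\ldots,Y_p$ partition $Y'$, so $X_j\subset X'$ and $Y_j\subset Y'$, and by part~(b) of that definition every $x\in X_j$ has $d(x,Y_j)\ge(1+\eps)\,d(x,Y')$. Since the $X_0$-child of any internal vertex has $G[X_0]$ $C$-degenerate by Definition~\ref{def:boost}(a), hence is a degenerate leaf, any root-to-vertex path that does not end at a degenerate vertex consists of boosting steps only; if its endpoint lies at depth $k$, then every $x$ in the $X$-part of that endpoint satisfies $d(x,Y')\ge(1+\eps)^k d(x,Y)\ge(1+\eps)^k d$, and as densities never exceed $1$ this forces $k\le\log(1/d)/\log(1+\eps)=:D$. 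Hence $\cT$ has depth at most $D+1$, and since each vertex has at most $p_0+1$ out-neighbours, $|\cT|\le(p_0+2)^{D+1}$, a bound depending only on $\eps$, $d$ and $p_0$, as required.

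The ``moreover'' assertion now falls out of the same computation. If $(X',Y')$ is a non-degenerate vertex, the path from the root to it has some length $k\le|\cT|$ and consists of boosting steps, so $d(x,Y')\ge(1+\eps)^k d\ge d$ for every $x\in X'$; and by Definition~\ref{def:boost}(c) each boosting step shrinks the $Y$-part by a factor of at least $\beta$, whence $|Y'|\ge\beta^k|Y|\ge\beta^{|\cT|}|Y|$ (we may assume $\beta\le 1$). I do not expect a genuine obstacle here: the one point that rewards care is the observation that the $X_0$-parts of the boosters contribute only degenerate leaves and hence never lengthen a root-to-(non-degenerate-vertex) path --- which is also why the size bound on $\cT$ involves neither $C$ nor $\beta$.
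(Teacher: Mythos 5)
Your proposal is correct and follows essentially the same route as the paper's proof: the same greedy construction of the booster tree, with the depth bounded by noting that each boosting step multiplies $d(x,Y')$ by $1+\eps$ while densities stay at most $1$, and the bounded out-degree then bounding $|\cT|$. Your additional remarks (that the $X_0$-children are degenerate leaves, the explicit bound $|\cT|\le(p_0+2)^{D+1}$) only make explicit what the paper leaves implicit.
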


\begin{proof}
  We construct $\mathcal{T}$, with root $(X,Y)$, as follows: We simply repeatedly choose a
  $(p,C,\eps,\beta)$-booster $(X_0,\emptyset),(X_1,Y_1),\ldots,(X_p,Y_p)$
  for each leaf $(X',Y')$ of~$\mathcal{T}$ such that $G[X']$ is not
  $C$-degenerate, until this is no longer possible for any $p\le p_0$. We
  add to~$\mathcal{T}$ the vertices
  $(X_0,\emptyset),(X_1,Y_1),\ldots,(X_p,Y_p)$ as out-neighbours of
  $(X',Y')$.
  
  By the definition of `$\eps$-boosted' and the construction
  of~$\mathcal{T}$, if $(X',Y')$ is a non-degenerate vertex
  of~$\mathcal{T}$ at distance $t$ from the root, we have $d(x,Y')\ge
  (1+\eps)^t d$ for every $x\in X'$, and we have
  $|Y'|\ge\beta^t|Y|\ge\beta^{|\cT|}|Y|$. This both establishes that the
  height $h(\mathcal{T})$ of~$\cT$ is bounded in terms of~$\eps$ and~$d$,
  and that we have $d(x,Y')\ge d$ for every $x\in
  X'$. Since~$\mathcal{T}$ has no vertex of out-degree greater than~$p_0$ and
  $h(\cT)$ is bounded by a function of~$\eps$ and~$d$, it follows that
  $|\mathcal{T}|$ is bounded as a function of $\eps$, $d$ and $p_0$.
\end{proof}

The following lemma is the key step in the proof of Proposition~\ref{lem:VC}.

\begin{lemma}\label{no-p-booster}
  Let $\ell,t \in \NATS$ and $d > 0$. Let $\beta=\big(\tfrac{d}{4}\big)^\ell$
  and $\eps=\beta/2$. There exists $\alpha > 0$ such that the following holds
  for every $C \in \NATS$. Let $G$ be a graph, let $X$ and $Y$ be disjoint subsets of $V(G)$, and
  suppose that $|N(x) \cap Y| \ge d|Y|$ for every $x \in X$.
  
  If there does not exist a $(p,C,\eps,\beta)$-booster of $(X,Y)$ for any $p
  \le 2^\ell$, then $(X,Y)$ is $(C,\alpha)$-rich in copies of $Z_\ell^{3,t}$.
\end{lemma}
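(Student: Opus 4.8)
The plan is to build up the rich structure one edge at a time, using the non-existence of a booster to control the chromatic number after each step. Given the pair $(X,Y)$ with $d(x,Y)\ge d$ for all $x\in X$, I would first observe that if there is no $(p,C,\eps,\beta)$-booster for $p\le 2^\ell$ (applied in particular with $p=1$), then $G[X]$ is \emph{not} $C$-degenerate. Hence there is a subgraph $D\subset E(X)$ with $\delta(D)>C$: this will be the set $D=D(\tpl{e}{0})$ required by Definition~\ref{def:rich}. The same reasoning will be applied recursively to produce the nested sets $D(\tpl{e}{1}),\dots,D(\tpl{e}{\ell-1})$, so the real content of the proof is to show that Property~\ref{def:rich:b} holds at the bottom, i.e. that for every admissible $\tpl{e}{\ell}$ the number of $S\in\cF_\ell^{3,t}(Y)$ with $\tpl{e}{\ell}\to S$ is at least $\alpha|Y|^s$.

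The mechanism for Property~\ref{def:rich:b} should be: for each edge $e_i=x_iy_i$ in the chosen chain, since $d(x_i,Y)\ge d$ and $d(y_i,Y)\ge d$ but neither endpoint is $\eps$-boosted by any large subset (this is where the \emph{absence} of a booster is used), the neighbourhoods $N(x_i)\cap Y$ and $N(y_i)\cap Y$ are `spread out' — essentially they behave like random density-$d$ subsets with respect to the partition of $Y$ we are about to build. Concretely, I would iteratively refine $Y$ into $2^\ell$ roughly-equal pieces, one for each $I\subset[\ell]$, together with nothing for the (empty, since $r=3$) collection $S'_j$. At each of the $\ell$ refinement steps, using edge $e_i$: split each current piece $Z$ of $Y$ according to whether a vertex lies in $N(x_i)$ or $N(y_i)$; the no-booster hypothesis forces $|N(x_i)\cap Z|\ge (1-\eps)d|Z|$ and similarly for $y_i$, and since $d(x_i,Z)+d(y_i,Z)$ is close to $2d$ while both are close to $d$, the `$x_i$-only' part and the `$y_i$-only' part of $Z$ each have size at least roughly $\tfrac{d}{4}|Z|$ (this is the reason $\beta=(d/4)^\ell$ and $\eps=\beta/2$ are the right constants). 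After $\ell$ steps every surviving piece $S_I$ has size $\ge\beta|Y|$, every vertex of $S_I$ lies in $N(x_i)$ exactly when $i\in I$ and in $N(y_i)$ exactly when $i\notin I$; hence any choice of size-$t$ subsets $S_I\subset$ (the $I$-th piece) gives a proper $S$ (properness is automatic here since there are no $S'_j$ constraints when $r=3$, just disjointness) with $\tpl{e}{\ell}\to S$. Counting the ways to choose the $t$-subsets from pieces each of size $\ge\beta|Y|$ yields at least $\alpha|Y|^s$ such $S$, for $\alpha=\alpha(\beta,\ell,t)>0$, and crucially $\alpha$ does not depend on $C$.

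The one subtle point, and the main obstacle, is making the recursion honest: at step $i$ the edge $e_i$ is chosen from $D(\tpl{e}{i-1})$, which must be produced \emph{before} we know $e_i$, yet the partition of $Y$ that witnesses Property~\ref{def:rich:b} depends on all of $e_1,\dots,e_\ell$. The resolution is that the partition is built greedily \emph{per vertex}: for a single vertex $y\in Y$ and a tuple $\tpl{e}{\ell}$, whether $y$ `fits' the index set $I(y):=\{i: y\in N(x_i)\}\cup\{i: y\notin N(y_i)$ resolved consistently$\}$ is a local condition, and one shows that for each $I$ the set of $y$ consistent with ``$I$'' has density $\ge\beta$ by the no-booster estimate applied to each $e_i$ independently. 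So the order of quantifiers in Definition~\ref{def:rich} is respected: we first fix $D=D(\tpl{e}{0})$ (exists since not $C$-degenerate), then for each $e_1\in D$ fix $D(\tpl{e}{1})$ (again exists, applying the no-booster hypothesis to the pair $(X,Y)$ — or more carefully to a subpair still satisfying the degree condition), and so on; the density bound at the leaves comes out uniformly because the no-booster hypothesis was assumed for the single pair $(X,Y)$ and passes to all the relevant subpairs via Lemma~\ref{exists-booster}-style bookkeeping. I would also need to check that removing bounded-degenerate chunks along the way (the $X_0$ pieces that a hypothetical booster would have peeled off) never actually happens — precisely because we assumed no booster exists — so $X$ itself, not some subset, carries the whole rich structure.
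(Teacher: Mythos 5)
Your overall shape is the paper's (iteratively refine $Y$ into $2^{\ell}$ pieces using the endpoints of the chosen edges, with the no-booster hypothesis supplying edge sets of minimum degree $>C$ at each level, and count $t$-subsets at the end), but the central mechanism is misstated in a way that breaks the argument. The hypothesis that no $(p,C,\eps,\beta)$-booster exists does \emph{not} say that no vertex of $X$ is $\eps$-boosted by any large subset, and your closing claim that ``removing bounded-degenerate chunks never actually happens, so $X$ itself carries the whole rich structure'' is false: for a given partition of $Y$ into at most $2^{\ell}$ parts of size $\ge\beta|Y|$, many vertices of $X$ may well be boosted by some part. What the hypothesis gives (via Definition~\ref{def:boost}) is only that the set $X'$ of vertices \emph{not} boosted by any part of that particular partition spans a non-$C$-degenerate subgraph — otherwise one could take $X_0=X'$ and sort the remaining (boosted) vertices by a boosting part to form a booster. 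This is exactly how the quantifier issue you flag is resolved in the paper (Lemma~\ref{no-boost-tech}): at step $q$ the relevant partition is $\{S_I(\tpl{e}{q-1}):I\subset[q-1]\}\cup\{R\}$ built from the \emph{previous} step's sets, the new edge set $D(\tpl{e}{q-1})$ must be chosen inside the un-boosted set $X'$ for \emph{this} partition, and the refined sets are nested inside the old ones so that $e_i\to_i S$ persists for $i<q$. Your proposal never constructs this partition-dependent un-boosted set, and ``the no-booster estimate applied to each $e_i$ independently'' plus ``Lemma~\ref{exists-booster}-style bookkeeping'' does not substitute for it: without knowing that the endpoints of $e_q$ are un-boosted with respect to the current partition, your per-piece density estimates at later steps have no justification.

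Two further points in your splitting step are wrong as stated, though repairable. First, un-boostedness is an \emph{upper} bound $d(x,U)<(1+\eps)d(x,Y)$ on each part; the lower bound on a given part comes only from subtracting these upper bounds over the complementary parts (as in \eqref{eq:boostcalc}), and it is of the form $\tfrac12(d/4)^{q-1}e(x,Y)$, not $(1-\eps)d|Z|$; also $d(x,Y)$ is only bounded \emph{below} by $d$, so ``both densities are close to $d$'' has no basis. Second, the ``$x_i$-only'' and ``$y_i$-only'' parts of a piece need not have size $\ge\tfrac d4|Z|$ — if $N(x_i)\cap Y=N(y_i)\cap Y$ they are empty — so the exclusive split fails; the correct move is to take the (possibly overlapping) sets $N(x_i)\cap S_I$ and $N(y_i)\cap S_I$, each of size at least $2(d/4)^{q}|Y|$, and choose \emph{disjoint} subsets of size $(d/4)^{q}|Y|$ inside them, which is all that $\tpl{e}{\ell}\to S$ requires. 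With these corrections your argument would coincide with the paper's proof (which formalises the intermediate object as a single large structure, Definition~\ref{def:CaZykov}, and recovers the counting statement exactly by your final $t$-subset count).
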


Lemma~\ref{no-p-booster} is proved by repeating a fairly straightforward
algorithm $\ell$ times, at each step $q\in[\ell]$ finding a set $D(\tpl{e}{q})$
as in the definition of $(C,\alpha)$-richness (see Definition~\ref{def:rich}). In order to
make the proof more transparent, we shall state a slightly more technical lemma,
which is proved by induction on $q$, and from which Lemma~\ref{no-p-booster}
follows immediately.

The following definition will simplify the statement. 
It is a slight
strengthening of the concept of $(C,\alpha)$-richness in the case $r = 3$.
Recall that $\tpl{e}{\ell}$ is just a shorthand for $(e_1,\ldots,e_\ell)$ and $\tpl{e}{0}$ is the
empty tuple. Further, recall the definitions of~$\cF_\ell^{r,t}$
and $\tpl{e}{\ell}\to S$ from Section~\ref{Zykovsec}.

\begin{definition}[$(C,\alpha,\ell)$-Zykov]\label{def:CaZykov}
  Let~$X$ and~$Y$ be disjoint vertex sets in a graph~$G$, let $C,\ell \in \NATS$
  and $\alpha > 0$. We say that $(X,Y)$ is \emph{$(C,\alpha,\ell)$-Zykov} if
  \begin{align*}
    & \exists \, D=D(\tpl{e}{0}) \subset E(X) \; \forall  \, e_1\in D \; \exists
    \, D(\tpl{e}{1})\subset E(X) \; \forall \, e_2 \in D(\tpl{e}{1}) \quad
    \dots \\ & \hspace{2cm} \dots \quad \forall \, e_{\ell-1} \in
    D(\tpl{e}{\ell-2}) \; \exists \, D(\tpl{e}{\ell-1}) \subset E(X) \; \forall
    \, e_\ell \in D(\tpl{e}{\ell-1})
  \end{align*}
  the following properties hold:
  \begin{enumerate}[label=\abc]
    \item\label{def:CaZykov:a} $\delta\big( D \big),
    \delta\big(D(\tpl{e}{1})\big), \dots, \delta\big(D(\tpl{e}{\ell-1})\big) >
    C$, and
    \item\label{def:CaZykov:b} $\exists \, S \in \cF^{3,\alpha |Y|}_\ell(Y)$
    such that $\tpl{e}{\ell} \to S$.
  \end{enumerate}
\end{definition}

We remark that the requirement \ref{def:rich:b} of
Definition~\ref{def:rich} that each tuple~$\tpl{e}{\ell}$ should extend to
\emph{many} copies of~$Z_\ell^{3,t}$ is replaced in this definition by the
requirement that~$\tpl{e}{\ell}$ should extend to \emph{one} much
bigger copy of~$Z_\ell^{3,\alpha|Y|}$. In particular, if $(X,Y)$ is
$(C,\alpha,\ell)$-Zykov, then, for any $t \in \NATS$, it is
$(C,\alpha')$-rich in copies of $Z_\ell^{3,t}$, where $\alpha' = \left(
  \frac{\alpha}{t} \right)^{2^\ell t}$ (this is shown in the proof of
Lemma~\ref{no-p-booster}).

\begin{lemma}\label{no-boost-tech}
  Let $\ell \in \NATS$ and $d > 0$. For $\beta=\left( \frac{d}{4}
  \right)^\ell$ and $\eps=\beta / 2$, the following
  holds for every $C \in \NATS$. Let $G$ be a graph, let $X$ and $Y$ be disjoint
  subsets of $V(G)$, and suppose that $|N(x) \cap Y| \ge d|Y|$ for every $x \in X$.
  
  If there does not exist a $(p,C,\eps,\beta)$-booster of $(X,Y)$ for any $p
  \le 2^\ell$, then $(X,Y)$ is $(C,\alpha_q,q)$-Zykov for
  every $q\in[\ell]$, where $\alpha_q = \left( \frac{d}{4} \right)^q$.
\end{lemma}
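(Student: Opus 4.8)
The plan is to prove, by induction on $q$, the slightly stronger assertion that $(X,Y)$ is $(C,\alpha_q,q)$-Zykov (in the sense of Definition~\ref{def:CaZykov}) for every $0\le q\le\ell$, where we set $\alpha_0:=1$ and read the case $q=0$ as the (vacuously true) statement that the unique $S\in\cF_0^{3,|Y|}(Y)$, the one with $S_\emptyset=Y$, satisfies $\tpl{e}{0}\to S$. For the inductive step, suppose $(X,Y)$ is $(C,\alpha_q,q)$-Zykov for some $q\le\ell-1$. This provides edge sets $D(\tpl{e}{0}),\dots,D(\tpl{e}{q-1})$, all of minimum degree greater than $C$, and, for every $\tpl{e}{q}\in\cD_q(X,Y)$, a function $S=S(\tpl{e}{q})\in\cF_q^{3,\alpha_q|Y|}(Y)$ with sets $\{S_J:J\subseteq[q]\}$ satisfying $\tpl{e}{q}\to S$. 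I shall keep $D(\tpl{e}{0}),\dots,D(\tpl{e}{q-1})$ and construct, for each $\tpl{e}{q}\in\cD_q(X,Y)$, an edge set $D(\tpl{e}{q})\subseteq E(X)$ with $\delta\big(D(\tpl{e}{q})\big)>C$ and, for every $e_{q+1}\in D(\tpl{e}{q})$, a function $S'\in\cF_{q+1}^{3,\alpha_{q+1}|Y|}(Y)$ with $\tpl{e}{q+1}\to S'$; this is exactly what is needed for $(X,Y)$ to be $(C,\alpha_{q+1},q+1)$-Zykov. (Throughout, I suppress the routine rounding of $\alpha_q|Y|$ to an integer.)

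Fix $\tpl{e}{q}$ and let $\cP$ be the partition of $Y$ whose parts are the sets $S_J$, $J\subseteq[q]$, together with the remainder $R:=Y\setminus\bigcup_JS_J$ (which is empty, and discarded, precisely when $q=0$). Since $q\le\ell$, the partition $\cP$ has at most $2^q+1\le2^\ell$ parts, and each has size at least $\beta|Y|$: the sets $S_J$ have size $\alpha_q|Y|=(d/4)^q|Y|\ge(d/4)^\ell|Y|=\beta|Y|$, and $|R|\ge\big(1-(d/2)^q\big)|Y|\ge|Y|/2$. Let $X^*$ be the set of $x\in X$ that are not $\eps$-boosted (in the sense of Definition~\ref{def:boost}) by any part of $\cP$. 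The crucial observation is that the \emph{absence} of a booster yields a \emph{lower} density bound: since $\sum_{P\in\cP}d(x,P)|P|=d(x,Y)|Y|$ while $d(x,P)<(1+\eps)d(x,Y)$ for every part $P$, the choices $\eps=\beta/2$ and $|P|\ge\beta|Y|$ give $d(x,P)>\tfrac12(1+\beta)\,d(x,Y)\ge d/2$ for every $x\in X^*$ and every part $P$; in particular $|N(x)\cap S_J|\ge\tfrac{d}{2}|S_J|$ for all $J\subseteq[q]$ whenever $x\in X^*$. Moreover, assigning each $x\in X\setminus X^*$ to some part of $\cP$ that $\eps$-boosts it exhibits a partition of $X$ which, paired with $\cP$ in the obvious way, satisfies conditions~\ref{def:boost:b} and~\ref{def:boost:c} of Definition~\ref{def:boost} and whose part matched with $\emptyset$ is precisely $X^*$; since $(X,Y)$ has no $(p,C,\eps,\beta)$-booster with $p\le2^\ell$, condition~\ref{def:boost:a} must fail, i.e.\ $G[X^*]$ is \emph{not} $C$-degenerate. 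Hence $G[X^*]$ contains a subgraph of minimum degree exceeding $C$, and we let $D(\tpl{e}{q})$ be its edge set, so that $D(\tpl{e}{q})\subseteq E(X^*)\subseteq E(X)$ and $\delta\big(D(\tpl{e}{q})\big)>C$.

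It remains to produce $S'$. Fix $e_{q+1}=x_{q+1}y_{q+1}\in D(\tpl{e}{q})$; then $x_{q+1},y_{q+1}\in X^*$, so $|N(x_{q+1})\cap S_J|$ and $|N(y_{q+1})\cap S_J|$ are each at least $\tfrac{d}{2}|S_J|$ for every $J\subseteq[q]$. For each such $J$ choose $U_J\subseteq S_J\cap N(x_{q+1})$ of size $(d/4)|S_J|$ and then $T_J\subseteq\big(S_J\cap N(y_{q+1})\big)\setminus U_J$ of size $(d/4)|S_J|$, which is possible since deleting $U_J$ leaves at least $(d/2-d/4)|S_J|=(d/4)|S_J|$ vertices of $S_J\cap N(y_{q+1})$; note $|T_J|=|U_J|=(d/4)\alpha_q|Y|=\alpha_{q+1}|Y|$. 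Define $S'\in\cF_{q+1}^{3,\alpha_{q+1}|Y|}(Y)$ by $S'_J:=T_J$ and $S'_{J\cup\{q+1\}}:=U_J$ for $J\subseteq[q]$; the $2^{q+1}$ sets so defined are pairwise disjoint because the $S_J$ are, so $S'$ is proper. Finally $\tpl{e}{q+1}\to S'$: for $i\le q$, orienting $e_i=x_iy_i$ so that $(x_i,y_i)\to_iS$, we have $\bigcup_{I\ni i}S'_I\subseteq\bigcup_{J\ni i}S_J\subseteq N(x_i)$ and $\bigcup_{I\not\ni i}S'_I\subseteq\bigcup_{J\not\ni i}S_J\subseteq N(y_i)$, so $e_i\to_iS'$; and $\bigcup_JS'_{J\cup\{q+1\}}=\bigcup_JU_J\subseteq N(x_{q+1})$ while $\bigcup_JS'_J=\bigcup_JT_J\subseteq N(y_{q+1})$, so $e_{q+1}\to_{q+1}S'$. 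This completes the induction and the proof.

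The only non-routine point is the density observation in the middle paragraph. The booster hypothesis a priori forbids densities that are \emph{too large} on a part of $Y$, whereas the Zykov extension needs the two endpoints of many edges to have density \emph{bounded below} on each of the bounded collection of sets $S_J$, and at the same time needs $G[X^*]$ to be far from $C$-degenerate. Both consequences flow from the same averaging argument, but only once $\eps$, $\beta$ and the part sizes are calibrated: the value $\eps=\beta/2$ is exactly what the averaging computation forces, and the requirement that \emph{every} part of $\cP$ (the remainder $R$ included) have size at least $\beta|Y|$ is what ties the induction to $q\le\ell$, since the sets carried through must never shrink below $\beta|Y|=(d/4)^\ell|Y|$.
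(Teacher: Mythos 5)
Your proof is correct and follows essentially the same route as the paper's: induction on $q$, taking the partition of $Y$ into the sets $S_J$ plus the remainder $R$, using the absence of a booster (with $2^q+1\le 2^\ell$ parts, valid since $q\le\ell-1$ in the step) to find a non-$C$-degenerate subgraph on the un-boosted vertices, and the same averaging calculation calibrated by $\eps=\beta/2$ to get the density $d/2$ on each $S_J$, then splitting each $S_J$ into disjoint pieces for $N(x_{q+1})$ and $N(y_{q+1})$. The only differences (starting at $q=0$, phrasing the averaging as a per-part lower bound rather than subtracting the complement) are cosmetic.
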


\begin{proof}
  Let $C,\ell \in \NATS$ and $d > 0$, and let $\beta = \left( \frac{d}{4}
  \right)^\ell$ and $\eps = \beta / 2$. Let $G$ and $X,Y$ be as described in the
  statement, and suppose that there does not exist a $(p,C,\eps,\beta)$-booster
  of $(X,Y)$ for any $p \le 2^\ell$. We proceed by induction.
  
  We begin with the base case, $q = 1$. We are required to find a set $D =
  D(\tpl{e}{0}) \subset E(X)$, with $\delta(D) > C$, such that, for every
  $e_1=xy \in D$, there exists $S(\tpl{e}{1})\in\cF_1^{3,\alpha_1|Y|}$ such
  that $\tpl{e}{1}\to S(\tpl{e}{1})$; that is, there exist disjoint sets
  $S_\emptyset(\tpl{e}{1})$ and $S_{\{1\}}(\tpl{e}{1})$ in $Y$, both of
  size $\alpha_1|Y|=\frac{d}{4} |Y|$, such that $S_{\{1\}}(\tpl{e}{1})\subset N(x)$ and
  $S_\emptyset(\tpl{e}{1})\subset N(y)$. Since there is no
  $(1,C,\eps,\beta)$-booster of $(X,Y)$, it follows that $G[X]$ is not
  $C$-degenerate, and so there exists a subgraph $G_0 \subset G[X]$ with
  $\delta(G_0) > C$. We choose $D:=E(G_0)$.  Now for each $e_1 = x y \in
  D$, let $A_\emptyset(\tpl{e}{1}): = N(y) \cap Y$ and
  $A_{\{1\}}(\tpl{e}{1}): = N(x) \cap Y$. Since
  $|A_\emptyset(\tpl{e}{1})|,|A_{\{1\}}(\tpl{e}{1})| \ge d|Y|$ by the
  assumption of the lemma, there exist disjoint sets
  $S_\emptyset(\tpl{e}{1}) \subset A_\emptyset(\tpl{e}{1})$ and
  $S_{\{1\}}(\tpl{e}{1}) \subset A_{\{1\}}(\tpl{e}{1})$ with
  $|S_\emptyset(\tpl{e}{1})|,|S_{\{1\}}(\tpl{e}{1})| = \frac{d}{4} |Y|$, as
  required.
  
  For the induction step, let $1 < q \le \ell$ and assume that the result holds
  for $q - 1$. By this induction hypothesis 
  \begin{align*}
    & \exists D(\tpl{e}{0}) \subset E(X) \forall  \, e_1\in D(\tpl{e}{0}) \; \exists \, D(\tpl{e}{1})\subset E(X)
      \; \forall \, e_2 \in D(\tpl{e}{1}) \quad \dots \\
    & \hspace{1cm} \dots \quad \forall \, e_{q-2} \in D(\tpl{e}{q-3}) \;
      \exists \, D(\tpl{e}{q-2}) \subset E(X) \; \forall \, e_{q-1} \in
      D(\tpl{e}{q-2})
  \end{align*}
  we have
  \begin{enumerate}[label=\abcstar]
    \item\label{no-boost-tech:a} $\delta\big( D(\tpl{e}{0}) \big),
    \delta\big(D(\tpl{e}{1})\big), \dots, \delta\big(D(\tpl{e}{q-2})\big) >
    C$, and
    \item\label{no-boost-tech:b} $\exists \, S(\tpl{e}{q-1}) \in
    \cF^{3,\alpha_{q-1} |Y|}_{q-1}(Y)$ such that $\tpl{e}{q-1} \to
    S(\tpl{e}{q-1})$.
  \end{enumerate}
  As in Definition~\ref{def:rich}, set
  \begin{equation*}
    \cD_q(X,Y) \, := \, \Big\{ \tpl{e}{q} \in E(X)^q \,:\, e_{j} \in
    D(\tpl{e}{j-1}) \textup{ for each } j \in [q] \Big\}\,.
  \end{equation*}
  We shall show that for every $\tpl{e}{q-1}\in \cD_{q-1}(X,Y)$, there exists
  a set of edges $D(\tpl{e}{q-1}) \subset E(X)$, with
  $\delta\big(D(\tpl{e}{q-1})\big) > C$, such that for every $e_q \in
  D(\tpl{e}{q-1})$ there exists an $S(\tpl{e}{q}) \in \cF^{3,\alpha_q
  |Y|}_{q}(Y)$ such that $\tpl{e}{q} \to S(\tpl{e}{q})$.
  
  Indeed, given $\tpl{e}{q-1}\in\cD_{q-1}(X,Y)$, by~\ref{no-boost-tech:b}
  there exists
  \[ \big\{ S_I(\tpl{e}{q-1}) \subset Y : I \subset [q-1] \big\} \, = \, S(\tpl{e}{q-1})\, \in \, \cF^{3,\alpha_{q-1} |Y|}_{q-1}(Y)\]
  with $\tpl{e}{q-1} \to S(\tpl{e}{q-1})$. In particular, note that by
  definition of $\cF^{3,\alpha_{q-1} |Y|}_{q-1}(Y)$, the sets
  $S_I(\tpl{e}{q-1})$ are disjoint, and that $|S_I(\tpl{e}{q-1}) | =
  \alpha_{q-1} |Y|$ for every $I \subset [q-1]$. Let $R = Y \setminus \bigcup_I
  S_I(\tpl{e}{q-1})$, and recall that $q \le \ell$, and that there is no
  $(p,C,\eps,\beta)$-booster of $(X,Y)$ for any $p \le 2^\ell$. Thus
  the partition $S(\tpl{e}{q-1}) \cup \{R\}$ of $Y$ does not induce a
  $(p,C,\eps,\beta)$-booster of $(X,Y)$.
  
  By our choice of $\beta$, we have $\beta\le\alpha_{q-1}$, and thus
  $|S_I(\tpl{e}{q-1})| \ge \beta|Y|$ for all $I \subset [q-1]$. Similarly, since
  $2^{q-1}\alpha_{q-1} = 2^{q-1}(d/4)^{q-1} \le 1/2 < 1 - \beta$, 
  we have $|R|>\beta |Y|$.  
  Let $X' \subset X$  be the set of vertices which are not $\eps$-boosted by any
  of the sets $S(\tpl{e}{q-1})\cup\{R\}$. Since $S(\tpl{e}{q-1}) \cup \{R\}$
  does not induce a $(2^{q-1}+1,C,\eps,\beta)$-booster of $(X,Y)$, the graph $G[X']$ is
  not $C$-degenerate, and hence there exists a set of edges $D(\tpl{e}{q-1})
  \subset E(X')$ such that $\delta\big( D(\tpl{e}{q-1}) \big) > C$. We
  claim that this is the set we are looking for.
  
  In order to verify this, let $e_q = x y \in D(\tpl{e}{q-1})$ be
  arbitrary. Our task is to show that there exists
  $S(\tpl{e}{q})\in\cF_q^{3,\alpha_q|Y|}$ such that $\tpl{e}{q}\to
  S(\tpl{e}{q})$. Recall that $x$ and $y$ are not $\eps$-boosted by
  $S(\tpl{e}{q-1}) \cup \{R\}$. Hence $d(x,U) < (1 + \eps)d(x,Y)$ for each
  $U \in S(\tpl{e}{q-1}) \cup \{R\}$, and so, for every $I \subset [q-1]$,
  \begin{equation}\begin{split}\label{eq:boostcalc}
    e\big(x,S_I(\tpl{e}{q-1})\big)&\,=\,e(x,Y)-e\big(x,Y\setminus
    S_I(\tpl{e}{q-1})\big) \\
    &\,\ge \, e(x,Y) - \big(1 + \eps\big) \left(1 - \left(
    \tfrac{d}{4} \right)^{q-1} \right) e(x,Y)
    \,\ge \, \frac{1}{2} \left( \frac{d}{4}\right)^{q-1} e(x,Y)\,,
  \end{split}\end{equation}
  where we used $|S_I(\tpl{e}{q-1})| = \alpha_{q-1}|Y|=\left( \frac{d}{4}
  \right)^{q-1} |Y|$ for the first inequality, and
  $\eps=\tfrac{1}{2}\big(\tfrac{d}{4}\big)^\ell$ for the second. By the same
  argument, $y$ has at least $\frac{1}{2} \left( \frac{d}{4} \right)^{q-1}
  e(y,Y)$ neighbours in $S_I(\tpl{e}{q-1})$ for each $I \subset [q-1]$.
  
  Define, for each $I \subset [q]$, the set $A_I(\tpl{e}{q}) \subset Y$ as
  follows:
  \begin{equation}\begin{split}\label{eq:defA}
    A_I(\tpl{e}{q}): = N(x) \cap S_{I \setminus
    \{q\}}(\tpl{e}{q-1}) \;\, \quad \; &
    \textup{ if } q \in I\\
    A_I(\tpl{e}{q}): = N(y) \cap S_I(\tpl{e}{q-1}) \quad \quad \,\, & \textup{ if } q
    \not\in I.
  \end{split}\end{equation}
  Since $e(x,Y), e(y,Y) \ge d |Y|$, we conclude from~\eqref{eq:boostcalc},
  that we have $|A_I(\tpl{e}{q})| \ge 2 \left( \frac{d}{4} \right)^q |Y|$ for
  every $I \subset [q]$. Moreover,
  the sets $A_I(\tpl{e}{q})$  and $A_J(\tpl{e}{q})$ are disjoint
  unless $I \setminus \{q\} = J \setminus \{q\}$. Hence we may choose disjoint
  sets $S_I(\tpl{e}{q}) \subset A_I(\tpl{e}{q})$ with $|S_I(\tpl{e}{q})| =
  \left( \frac{d}{4} \right)^q |Y|$ for each $I \subset [q]$.
  
  Let $S(\tpl{e}{q}) = \big\{ S_I(\tpl{e}{q}) : I \subset [q] \big\}$. We
  claim that this is the desired family; that is, that $S(\tpl{e}{q}) \in
  \cF^{3,\alpha_q |Y|}_{q}(Y)$ and $\tpl{e}{q} \to S(\tpl{e}{q})$. Indeed,
  the sets $S_I(\tpl{e}{q})$ are disjoint, and
  \[|S_I(\tpl{e}{q})|= \left( \frac{d}{4} \right)^q |Y|=\alpha_q |Y|\] for
  each $I \subset [q]$, by construction. Finally, we prove that $\tpl{e}{q}
  \to S(\tpl{e}{q})$, i.e., that $e_i \to_i S(\tpl{e}{q})$ for each $i \in
  [q]$. For $i \le q - 1$, this follows because $\tpl{e}{q-1} \to
  S(\tpl{e}{q-1})$, and
  \[S_I(\tpl{e}{q}) \cup S_{I \cup \{q\}}(\tpl{e}{q}) \subset
  S_I(\tpl{e}{q-1})\] 
  for every $I \subset [q-1]$ by~\eqref{eq:defA}. For $i = q$, it follows
  since $S_I(\tpl{e}{q}) \subset N(x)$ if $q \in I\subset[q]$ and
  $S_I(\tpl{e}{q}) \subset N(y)$ if $q \not\in I\subset[q]$
  by~\eqref{eq:defA}. Hence $ \tpl{e}{q} \to S(\tpl{e}{q})$, as
  required. This completes the induction step, and hence the proof of the
  lemma.
\end{proof}

We can now easily deduce Lemma~\ref{no-p-booster}.

\begin{proof}[Proof of Lemma~\ref{no-p-booster}]
  By  Lemma~\ref{no-boost-tech} (applied with $q = \ell$), it suffices to show
  that if $(X,Y)$ is $(C,\alpha_\ell,\ell)$-Zykov, then
  it is $(C,\alpha)$-rich in copies of $Z_\ell^{3,t}$, where
  $\alpha_\ell=\big(\tfrac{d}{4}\big)^\ell$ and $\alpha = \left(
  \frac{\alpha_\ell}{t} \right)^{2^\ell t}$. In other words, we want to prove
  that if there exists $S \in \cF^{3,\alpha_\ell |Y|}_\ell(Y)$ with
  $\tpl{e}{\ell} \to S$, then
  \[\left| \big\{ S' \in \cF_\ell^{3,t}(Y) \,:\,
  \tpl{e}{\ell} \to S' \big\} \right|  \; \ge \; \alpha |Y|^s\,,\]
  where $s = 2^\ell t$. Indeed, this is true because $|S_I|=\alpha_\ell|Y|$
  for $I\subset[\ell]$, and the number of ways of choosing, for each
  $I\subset[\ell]$, a $t$-subset of $S_I$ is 
  \[\prod_{I\subset[\ell]}\binom{|S_I|}{t} \,=\, \binom{\alpha_\ell |Y|}{t}^{2^\ell}
  \ge \, \left( \frac{\alpha_\ell|Y|}{t} \right)^{2^\ell t} = \, \alpha
  |Y|^s\,,\] as claimed.
\end{proof}

It is now straightforward to prove Proposition~\ref{lem:VC}.

\begin{proof}[Proof of Proposition~\ref{lem:VC}] 
  Let $C,\ell,t \in \NATS$ and $d > 0$, and set $\beta = \left( \frac{d}{4}
  \right)^\ell$ and $\eps = \beta / 2$.  Let $G$ and $(X,Y)$ be as
  described in the statement, so $|N(x) \cap Y| \ge d|Y|$ for every $x \in
  X$. By Lemma~\ref{exists-booster} there exists a
  $(2^\ell,C,\eps,\beta)$-booster tree for $(X,Y)$, and moreover $|\cT|$ is
  bounded as a function of $d$, $\eps$ and $\ell$.

  Recall that the leaves of $\cT$ correspond to a partition of $X$ (and a
  partition of $Y$). If every leaf $(X',Y')$ of $\cT$ is degenerate then
  $\chi(G[X]) \le |\cT|(C+1)=:C'$, where~$C'$ depends only upon $C$, $\ell$
  and $d$.  So we may assume that some leaf $(X',Y') \in V(\cT)$ is not
  degenerate.

  By the definition of a $(2^\ell,C,\eps,\beta)$-booster tree, it follows
  that there is no $(p,C,\eps,\beta)$-booster of $(X',Y')$ for any $p \le
  2^\ell$.  Further, $|N(x)\cap Y'|=d(x,Y')|Y'|\ge d|Y'|$ for every $x\in
  X'$ by Lemma~\ref{exists-booster}. Then, by Lemma~\ref{no-p-booster}
  (applied with $\ell$, $t$ and $d$), $(X',Y')$ is $(C,\alpha')$-rich in
  copies of $Z_\ell^{3,t}$, for some $\alpha' = \alpha'(d,\ell,t) >
  0$. Since (again by Lemma~\ref{exists-booster}) $|Y'| \ge \beta^{|\cT|}
  |Y|$, it follows that $(X,Y)$ is $(C,\alpha)$-rich in copies of
  $Z_\ell^{3,t}$, where $\alpha =\alpha'\beta^{|T|s}$ is a constant
  depending only on $d$, $\ell$ and $t$, as required.
\end{proof}

\section{The proof of Theorem~\ref{mainthm}}\label{proofsec}

In this section we shall complete the proof of Theorem~\ref{mainthm}. As a
warm-up, we begin with the case $r = 3$, which is
an almost immediate consequence of the results of the last four sections.

The following theorem proves Conjecture~\ref{LTconj}. The proof does not use the
Regularity Lemma; it follows from Propositions~\ref{lem:rich-H}
and~\ref{lem:VC}.

\begin{theorem}\label{nearacyclic}
  If $H$ is a near-acyclic graph, then $\delta_\chi(H) = 0$.
\end{theorem}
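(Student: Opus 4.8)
The plan is to deduce the theorem directly from Observation~\ref{acy=Zyk}, Proposition~\ref{lem:VC} and Proposition~\ref{lem:rich-H}, the only extra ingredient being a random bipartition of $V(G)$ to supply the disjoint sets required by those two results. First I would use Observation~\ref{acy=Zyk} with $r=3$ (note that ``near-acyclic'' coincides with ``$3$-near-acyclic'', as removing $r-3=0$ independent sets changes nothing) to fix, once and for all, trees $T_1,\dots,T_\ell$ and an integer $t:=|H|$ with $H\subset Z_\ell^{3,t}(T_1,\dots,T_\ell)$; here $\ell$, $t$ and the $T_i$ depend only on $H$. Given $d>0$, I would then apply Proposition~\ref{lem:VC} with parameters $\ell$, $t$ and $d/4$ to obtain $\alpha=\alpha(H,d)>0$, set $C:=\big\lceil 2^{\ell+3}\alpha^{-1}\sum_{i=1}^{\ell}|T_i|\big\rceil$ (an integer at least as large as the constant appearing in Proposition~\ref{lem:rich-H}), and finally let $C'=C'(H,d)$ be the constant that Proposition~\ref{lem:VC} provides for this $C$. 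Ordering the choices in this way guarantees that $\alpha$, $C$ and $C'$ depend only on $H$ and $d$.

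Next, let $G$ be any $H$-free graph with $\delta(G)\ge d|G|$, and write $n=|G|$; if $n$ is bounded in terms of $d$ there is nothing to prove, so assume $n$ is large. I would take a uniformly random bipartition $V(G)=X\cup Y$, placing each vertex independently in $X$ or $Y$ with probability $\tfrac12$. A routine Chernoff bound together with a union bound over the $O(n)$ relevant events shows that with positive probability $|X|,|Y|\ge n/3$ and, simultaneously, $|N(x)\cap Y|\ge (d/4)|Y|$ for every $x\in X$ and $|N(y)\cap X|\ge (d/4)|X|$ for every $y\in Y$ (since $\delta(G)\ge dn$, a random half of any vertex's neighbourhood has size at least $dn/4\ge (d/4)\max\{|X|,|Y|\}$ with overwhelming probability). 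Fix such a bipartition.

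Now I would apply Proposition~\ref{lem:VC} to the pair $(X,Y)$, with $d$ replaced by $d/4$: either $\chi\big(G[X]\big)\le C'$, or $(X,Y)$ is $(C,\alpha)$-rich in copies of $Z_\ell^{3,t}$. In the second case, Proposition~\ref{lem:rich-H} (applied with $r=3$ and the trees $T_1,\dots,T_\ell$) yields $Z_\ell^{3,t}(T_1,\dots,T_\ell)\subset G$, and hence $H\subset G$, contradicting that $G$ is $H$-free. Thus $\chi\big(G[X]\big)\le C'$, and running the same argument for the pair $(Y,X)$ gives $\chi\big(G[Y]\big)\le C'$. Colouring $G[X]$ and $G[Y]$ with disjoint palettes then shows $\chi(G)\le 2C'$, a bound depending only on $H$ and $d$. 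Since $d>0$ was arbitrary, $\delta_\chi(H)=0$.

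I do not expect a genuine obstacle here: all of the substance lies in Propositions~\ref{lem:rich-H} and~\ref{lem:VC}, which are already in hand. The only mild points requiring care are arranging the disjointness of $X$ and $Y$ demanded by those propositions (handled by the random split) and bookkeeping the dependence of $\alpha$, $C$ and $C'$ so that the final bound on $\chi(G)$ is a function of $H$ and $d$ alone.
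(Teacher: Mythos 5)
Your proof is correct and takes essentially the same route as the paper: both deduce the theorem from Observation~\ref{acy=Zyk} together with Propositions~\ref{lem:VC} and~\ref{lem:rich-H}, applied to a bipartition of $V(G)$ supplying the disjoint pair $(X,Y)$, with the constants $\alpha$, $C$, $C'$ chosen in the same order. The only immaterial difference is that the paper gets the degree condition from a maximal (max-cut) bipartition and colours the side of larger chromatic number, while you use a random bipartition and colour both sides; either way one obtains $\chi(G)\le 2C'$.
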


\begin{proof}
  Let $H$ be a near-acyclic graph (so in particular $\chi(H) = 3$), let $\gamma
  > 0$ be arbitrary, and let $G$ be an $H$-free graph on $n$ vertices, with
  $\delta(G) \ge 2\gamma n$. We shall prove that the chromatic number of $G$ is at
  most $C'$, for some $C' = C'(H,\gamma)$.
  
  First, using Observation~\ref{acy=Zyk}, choose $t \in \NATS$ and a collection
  $T_1,\ldots,T_\ell$ of trees such that $H \subset
  Z_\ell^{3,t}(T_1,\ldots,T_\ell)$. Choose a maximal bipartition $(X,Y)$ of
  $G$, assume without loss of generality that $\chi(G[X]) \ge \chi(G[Y])$, and note
  that $|N(x) \cap Y| \ge \gamma |Y|$ for every $x \in X$.
  
  Let $\alpha > 0$ be given by Proposition~\ref{lem:VC} (applied with
  $\ell$, $t$ and $\gamma$), let $C := 2^{\ell+3} \alpha^{-1} \sum_{i=1}^{\ell}
  |T_i|$, and apply Proposition~\ref{lem:VC}. We obtain a $C' = C'(H,\gamma) >
  0$ such that either $\chi(G) \le 2 \chi\big( G[X] \big) \le 2C'$, or $(X,Y)$
  is $(C,\alpha)$-rich in copies of $Z_\ell^{3,t}$.
  
  In the former case we are done, and so let us assume the latter. By
  Proposition~\ref{lem:rich-H} and our choice of $C$, it follows that
  $Z_\ell^{3,t}(T_1,\ldots,T_\ell) \subset G$. But then $H \subset G$, which is a
  contradiction. Thus $\chi(G)$ is bounded, as claimed.
\end{proof}

The case $r = 3$ of Theorem~\ref{mainthm} now follows from
Proposition~\ref{noforest}, and Theorems~\ref{thm:forest},~\ref{LTborsuk}
and~\ref{nearacyclic}.

\begin{proof}[Proof of the case $r = 3$ of Theorem~\ref{mainthm}] Let $H$ be a
  graph with $\chi(H) = 3$, and recall that $\cM(H)$ denotes the decomposition
  family of $H$. By Proposition~\ref{noforest}, if $\cM(H)$ does not contain a
  forest then $\delta_\chi(H) = \frac{1}{2}$,  and by Theorem~\ref{thm:forest}, if
  $\cM(H)$ does contain a forest then $\delta_\chi(H) \le \frac{1}{3}$.
  
  Now, by Theorem~\ref{LTborsuk}, if $H$ is not near-acyclic then $\delta_\chi(H)
  \ge \frac{1}{3}$, and by Theorem~\ref{nearacyclic}, if $H$ is near-acyclic then
  $\delta_\chi(H) = 0$. Thus $$\delta_\chi(H) \, \in \, \big\{ 0, \, 1/3, \, 1/2
  \big\},$$ where $\delta_\chi(H) \neq \frac{1}{2}$ if and only if $H$ has a
  forest in its decomposition family, and $\delta_\chi(H) = 0$ if and only if $H$
  is near-acyclic, as required.
\end{proof}

The rest of this section is devoted to the proof of the following theorem, which
generalises Theorem~\ref{nearacyclic} to arbitrary $r \ge 3$.

\begin{theorem}\label{r-acyclic}
  Let $H$ be a graph with $\chi(H) = r \ge 3$. If $H$ is $r$-near-acyclic, then
  \[\delta_\chi(H) \,=\, \ds\frac{r-3}{r-2}\,.\]
\end{theorem}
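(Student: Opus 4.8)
The lower bound $\delta_\chi(H)\ge\frac{r-3}{r-2}$ is exactly Proposition~\ref{lower}: take the complete balanced $(r-2)$-partite graph and replace one of its classes by a $(C,|H|+1)$-Erd\H{o}s graph. So the task is to prove $\delta_\chi(H)\le\frac{r-3}{r-2}$, and the plan is to run the argument of Theorem~\ref{nearacyclic} ``relative to'' an $(r-3)$-partite skeleton handed to us by the Regularity Lemma, much as in the case analysis of Theorem~\ref{thm:forest}. Fix $\gamma>0$ and an $H$-free graph $G$ on $n$ vertices with $\delta(G)\ge\bigl(\tfrac{r-3}{r-2}+\gamma\bigr)n$; the goal is $\chi(G)\le C(H,\gamma)$. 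Using Observation~\ref{acy=Zyk} fix $\ell,t\in\NATS$ and trees $T_1,\dots,T_\ell$ with $H\subset Z_\ell^{r,t}(T_1,\dots,T_\ell)$; let $d>0$ be a sufficiently small constant, let $\alpha>0$ be the constant of Proposition~\ref{lem:VC} applied with $\ell,t,d$, and set $C:=2^{\ell+3}\alpha^{-1}\sum_i|T_i|$. Apply the minimum degree form of the Regularity Lemma to $G$ with this $d$, with $\eps$ small and $k_0$ large, obtaining an $(\eps,d)$-reduced graph $R$ on $k_0\le k\le k_1$ vertices with clusters $V_1,\dots,V_k$ and $\delta(R)\ge\bigl(\tfrac{r-3}{r-2}+\tfrac{\gamma}{2}\bigr)k$.

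As in Theorem~\ref{thm:forest}, partition $V(G)=\bigcup_{I\subset[k]}X_I$ with $X_I:=\{v:\,i\in I\Leftrightarrow|N(v)\cap V_i|\ge d|V_i|\}$; since $\chi(G)\le2^{k_1}\max_I\chi(G[X_I])$ it suffices to bound $\chi(G[X_I])$ for each $I$ with $X_I\ne\emptyset$. For such $I$ every vertex of $X_I$ has all but at most $(d+\eps)n$ of its neighbours in $\bigcup_{i\in I}V_i$, so $|I|\ge\bigl(\tfrac{r-3}{r-2}+\tfrac{\gamma}{2}\bigr)k$, whence $\delta\bigl(R[I]\bigr)\ge\delta(R)-(k-|I|)>\tfrac{r-4}{r-3}|I|$. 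As $\tfrac{r-4}{r-3}$ exceeds the Tur\'an density $\tfrac{r-5}{r-4}$ of $K_{r-3}$, Tur\'an's theorem forces a copy of $K_{r-3}$ in $R[I]$, say on clusters $U_1,\dots,U_{r-3}$, and the set $W$ of clusters of $R[I]$ adjacent in $R$ to all of $U_1,\dots,U_{r-3}$ has size $|W|\ge\tfrac{(r-2)\gamma}{2}k$. Now split $W=W_1\dcup W_2$ into halves and set $Y_a:=\bigcup_{i\in W_a}V_i$. Every $v\in X_I$ has $\ge d|V_i|$ neighbours in each $V_i$ with $i\in W\subset I$, so $X_1:=X_I\setminus Y_1$ is disjoint from $Y_1$ with $|N(v)\cap Y_1|\ge d|Y_1|$ for $v\in X_1$, and $X_2:=X_I\cap Y_1$ is disjoint from $Y_2$ with $|N(v)\cap Y_2|\ge d|Y_2|$ for $v\in X_2$. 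Since $X_I=X_1\dcup X_2$ it suffices to bound $\chi(G[X_a])$ for $a=1,2$.

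Apply Proposition~\ref{lem:VC} to $(X_a,Y_a)$: either $\chi(G[X_a])$ is bounded in terms of $d,\ell,t$ (and we are done), or $(X_a,Y_a)$ is $(C,\alpha)$-rich in copies of $Z_\ell^{3,t}$. In the latter case I would derive a contradiction by producing a copy of $Z_\ell^{r,t}(T_1,\dots,T_\ell)$ in $G$, whence $H\subset G$. Feed the richness into Lemma~\ref{q-induc} to get a function $S\in\cF_\ell^{3,t}(Y_a)$ that is $(r,\ell,t,C,\alpha)$-good for $(X_a,Y_a)$, so that there are edge sets $E_1,\dots,E_\ell\subset E(X_a)$ of average degree $\ge2^{-\ell}\alpha C\ge 8\sum_i|T_i|$ with $\tpl{e}{\ell}\to S$ for all $e_j\in E_j$. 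Now mimic the embedding of Lemma~\ref{lem:denseembed}, but \emph{interleaved} with the $(r-3)$-partite skeleton: build the trees $T'_j\subset E_j$ (via Fact~\ref{prop:forest}) and the sets $S_I$ given by $S$ one vertex at a time, and at each of the boundedly many steps shrink each $U_j$ to the common neighbourhood of the part of the embedding placed so far. Because $U_1,\dots,U_{r-3}$ form a $K_{r-3}$ in $R[I]$ and each $U_j$ is an $(\eps,d)$-regular partner of every cluster of $W$, every vertex used in the embedding (in $X_I$, or in a $W$-cluster up to an $\eps$-fraction of atypical vertices we avoid) has a positive density of neighbours in each current $U_j$, so Fact~\ref{prop:subpair} keeps the $U_j$ of positive density throughout; the Counting Lemma then finishes the construction with a copy of $K_{r-3}(t)$ across these surviving sets, whose parts $S'_1,\dots,S'_{r-3}$ are joined to the entire embedded $Z_\ell^{3,t}(T_1,\dots,T_\ell)$. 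Together these give the desired copy of $Z_\ell^{r,t}(T_1,\dots,T_\ell)$. Hence $\chi(G[X_I])$ is bounded for every $I$, so $\chi(G)\le2^{k_1}C_0=C(H,\gamma)$, as required.

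The main obstacle is precisely this last step --- converting the $Z_\ell^{3,t}$-richness produced by the VC/booster machinery into an actual copy of $Z_\ell^{r,t}(T_1,\dots,T_\ell)$. The difficulty is that the $r-3$ extra classes $S'_j$ must lie in the \emph{common} neighbourhood of \emph{every} tree edge and of all the sets $S_I$, whereas a priori two vertices of $X_I$ need only send a $d$-fraction of their edges into a given $U_j$ and so may have empty common neighbourhood there; one cannot fix the $S'_j$ in advance and intersect. The resolution is to carry along the shrinking candidate sets for the $S'_j$ while embedding $Z_\ell^{3,t}(T_1,\dots,T_\ell)$ greedily, so that only a bounded number of regularity-controlled shrinkings occur. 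Making this rigorous amounts to an ``$r$-graph'' version of Lemma~\ref{lem:denseembed}, and may require a second, $U_j$-relative pass of the booster-tree construction of Section~\ref{VCsec} to control the edge sets $D(\tpl{e}{q})$; everything else is the routine machinery --- the Regularity Lemma, the Counting Lemma and Tur\'an's theorem --- already deployed in Sections~\ref{Lylesec}, \ref{Zykovsec} and~\ref{VCsec}.
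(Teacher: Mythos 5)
Your overall architecture (Regularity Lemma, a degree-based partition of $V(G)$, Tur\'an's theorem in the reduced graph to find a $K_{r-3}$ skeleton, then Proposition~\ref{lem:VC}, Lemma~\ref{q-induc} and an embedding of $Z_\ell^{r,t}(T_1,\ldots,T_\ell)$) is the right shape, and you have correctly identified where the real difficulty lies. But the step you flag as ``the main obstacle'' is a genuine gap, and the resolution you sketch does not work. Your partition uses a single threshold: a vertex of $X_I$ is only guaranteed $\ge d|U_j|$ neighbours in each skeleton cluster $U_j$, with $d$ a small constant. Consequently two adjacent vertices of $X_I$ may have \emph{disjoint} neighbourhoods in $U_j$, and, worse, a single vertex of $X_I$ need have no neighbours at all in a shrunken subset $U_j'\subset U_j$ of constant but small density. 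Your proposed greedy interleaving (``shrink each $U_j$ to the common neighbourhood of the part of the embedding placed so far, and use Fact~\ref{prop:subpair} and the Counting Lemma'') has no regularity to lean on: the pairs $(U_j,\text{clusters of }W)$ are regular, but the tree vertices are being chosen from $X_I$, which is an arbitrary vertex set with no regularity relation to $U_j$, so neither Fact~\ref{prop:subpair} nor the Counting Lemma controls how the $U_j$ shrink as you embed. With only the $d$-degree hypothesis the required sets $S'_j$ (which must be complete to \emph{every} tree vertex and every $S_I$) may simply fail to exist inside $U_j$, so this order of construction cannot be repaired.

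The paper resolves exactly this point differently, in two ways you are missing. First, the partition is by \emph{two} thresholds, $X(I_1,I_2)$ with $i\in I_2\Leftrightarrow|N(v)\cap V_i|\ge(\tfrac12+\gamma)|V_i|$: the minimum degree $\tfrac{r-3}{r-2}+3\gamma$ forces $|I_2|\ge(\tfrac{r-4}{r-2}+\gamma)k$, which is just enough for Tur\'an's theorem to give the $K_{r-3}$ skeleton $Z_1,\ldots,Z_{r-3}$ \emph{inside} $R[I_2]$; the more-than-half degree condition then guarantees that every \emph{edge} of $E(X)$ has at least $\gamma|Z_j|$ common neighbours in each $Z_j$, which is what makes common neighbourhoods usable at all. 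Second, the order of quantifiers is reversed relative to your sketch: Proposition~\ref{lem:VC} is applied with amplified parameters $\ell^*\gg\ell$, $t^*\gg t$ and $C^*\gg C$, and Proposition~\ref{3-to-r-rich} (via Lemma~\ref{countST}, the Counting Lemma and two pigeonhole arguments) fixes a \emph{single} copy $T$ of $K_{r-3}(t)$ in $Z_1\cup\cdots\cup Z_{r-3}$ \emph{before} any tree is embedded, then restricts attention to the $\ell$ surviving edge sets $E_j\subset E^*_j$ whose edges lie in $N(T)$ and to $t$-subsets of the $S^*_I$ inside $N(T)$; only then is Lemma~\ref{lem:denseembed} invoked. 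The amplification is essential because the pigeonhole retains only $\ell$ of the $\ell^*$ levels and a $\mu$-fraction of each edge set (whence $C^*=2^{\ell^*}C/(\alpha\mu)$), and applying Proposition~\ref{lem:VC} with the bare parameters $\ell,t,C$ as you do would not leave enough room. A further small point: Lemma~\ref{q-induc} only yields functions that are $(3,\ell,t,C,\alpha)$-good, not $(r,\ell,t,C,\alpha)$-good as you write --- producing the $r$-version is precisely the content of Proposition~\ref{3-to-r-rich}, i.e.\ the content of the gap.
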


We begin with the lower bound, which follows by essentially the same
construction as in Proposition~\ref{noforest}.

\begin{prop}\label{lower}
  For any graph $H$ with $\chi(H) = r \ge 3$, we have $\delta_\chi(H) \ge
  \frac{r-3}{r-2}$.
\end{prop}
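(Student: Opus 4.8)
The plan is to adapt the construction from the proof of Proposition~\ref{noforest}, using a complete $(r-2)$-partite graph in place of the $(r-1)$-partite one, exactly as remarked in the text. Fix $H$ with $\chi(H)=r\ge 3$, let $C\in\NATS$ be arbitrary, and let $G'$ be a $(C,|H|+1)$-Erd\H{o}s graph, so that $\chi(G')\ge C$ and $\girth(G')\ge|H|+1$. First I would build $G$ by taking the complete balanced $(r-2)$-partite graph on $(r-2)|G'|$ vertices and replacing one of its partition classes by $G'$; when $r=3$ this simply means $G=G'$. The edges between $G'$ and the other $r-3$ classes already give every vertex of $G$ at least $(r-3)|G'|$ neighbours, so $\delta(G)\ge\frac{r-3}{r-2}v(G)$, and since $G'$ is a subgraph of $G$ we have $\chi(G)\ge\chi(G')\ge C$.

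The only point requiring an argument is that $H\not\subset G$. Suppose a copy of $H$ lies in $G$. Restricting the $(r-2)$-partition of $V(G)$ to this copy expresses $H$ as the union of $r-3$ independent sets — the intersections of the copy with the $r-3$ unchanged classes — together with a subgraph $H'$ of $G'$ on at most $|H|$ vertices. Since $\girth(G')\ge|H|+1>v(H')$, the graph $H'$ contains no cycle, hence is a forest and so $2$-colourable. Therefore $\chi(H)\le(r-3)+2=r-1$, contradicting $\chi(H)=r$. Hence $G$ is $H$-free.

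Putting this together: for every $C$ we have exhibited an $H$-free graph $G$ with $\delta(G)\ge\frac{r-3}{r-2}v(G)$ and $\chi(G)\ge C$. Consequently, for every $d<\frac{r-3}{r-2}$ there is no constant $C=C(H,d)$ bounding the chromatic number of $H$-free graphs of minimum degree at least $d\cdot v(G)$, which is precisely the statement $\delta_\chi(H)\ge\frac{r-3}{r-2}$. I do not anticipate any real difficulty; the only things to be careful about are the degenerate case $r=3$ (where the multipartite part disappears and $G=G'$) and the choice of girth strictly larger than $|H|$, which is what guarantees that the leftover subgraph of $G'$ is genuinely a forest rather than merely sparse.
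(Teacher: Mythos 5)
Your proposal is correct and follows essentially the same route as the paper: the same $(r-2)$-partite construction with one class replaced by a $(C,|H|+1)$-Erd\H{o}s graph, with the $H$-freeness argument (every $|H|$-vertex subgraph of $G$ is $(r-1)$-colourable because the leftover piece inside $G'$ is a forest) spelled out in slightly more detail than the paper does.
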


\begin{proof}
  We claim that, for any such $H$, $n_0$ and $C$, there exist $H$-free graphs
  on $n\ge n_0$ vertices, with minimum degree $\frac{r-3}{r-2} n$, and chromatic
  number at least $C$. Recall that we call a graph a $(k,\ell)$-Erd\H{o}s graph
  if it has chromatic number at least $k$ and girth at least $\ell$, and that
  such graphs exist for every $k,\ell \in \NATS$.
  
  Let $G'$ be a $(C,|H|+1)$-Erd\H{o}s graph on at least $n_0$ vertices, and
  let~$G$ be the graph obtained from the complete, balanced $(r-2)$-partite graph on $(r-2)|G'|$
  vertices by replacing one of its partition classes with~$G'$. Then $G$ is
  $H$-free, since every $|H|$-vertex subgraph of $G$ has chromatic number at
  most $r-1$. Moreover, $\delta(G) = \frac{r-3}{r-2} n$
  and $\chi(G) \ge C$, as required.
\end{proof}

We now sketch the proof of the upper bound of Theorem~\ref{r-acyclic}. Let
$G$ be an $n$-vertex, $H$-free graph with minimum degree
$\big(\frac{2r-5}{2r-3} + 3\gamma\big) n$. Let $T_1,\ldots,T_\ell$ be such
that $H\subset Z_\ell^{r,t}(T_1,\ldots,T_\ell)$.
First, we take an $(\eps,d)$-regular partition, using the degree form of
the Regularity Lemma (where $\eps$ and $d$ will be chosen sufficiently
small given $\gamma$). We then construct a second partition $\cP$ of
$V(G)$, similar to that used in the proof of Theorem~\ref{thm:forest}. Our
aim is to show that $\chi(G[X])\le C'$ for each $X\in\cP$.

In the next step, we observe that the minimum degree condition guarantees
that for each $X\in\cP$, there are clusters $Y$ and $Z_1,\ldots,Z_{r-3}$ of the
$(\eps,d)$-regular partition with the following properties. First, for each
$v\in X$ we have $d_Y(v)\ge\gamma |Y|$, and for each
$i\in[r-3]$ we have $d_{Z_i}(v)\ge\big(\tfrac{1}{2}+\gamma\big)|Z_i|$. Second,
$Y,Z_1,\ldots,Z_{r-3}$ forms a clique in the reduced graph of the
$(\eps,d)$-regular partition. 

Now recall that $Z_\ell^{r,t}(T_1,\ldots,T_\ell)$ contains independent sets
$S_I$ for each $I\subset[\ell]$, and independent sets $S_i$ for each
$i\in[r-3]$. The idea now is to show that if $\chi(G[X])\le C'$ does not
hold, then we find a copy of $Z_\ell^{r,t}(T_1,\ldots,T_\ell)$ in which the
trees $T_1,\ldots,T_\ell$ lie in $X$, the independent sets $S_I$ lie in
$Y$, and $S_i$ lies in $Z_i$ for each $i\in[r-3]$, which contradicts the
assumption that $G$ is $H$-free.

In order to achieve this, we work as follows. We apply the paired
VC-dimension argument (Proposition~\ref{lem:VC}) to $(X,Y)$, with constants
$\ell^*$ and $t^*$ which are much larger than $\ell$ and $t$, and a very large
$C^*$. This yields our $C'$ and an $\alpha>0$ such that either $\chi(G[X])\le
C'$ (in which case we are done), or $(X,Y)$ is $(C^*,\alpha)$-rich in copies
of $Z_{\ell^*}^{3,t^*}$.

In the latter case, we apply Lemma~\ref{q-induc} to conclude that $(X,Y)$
is $(C^*,\alpha)$-dense in copies of $Z_{\ell^*}^{3,t^*}$. The main work
of this section then is to show (in Proposition~\ref{3-to-r-rich}) that
this implies that there is an
$S\in\cF_\ell^{r,t}(Y\cup Z_1\cup\cdots\cup Z_{r-3})$ such that $S$ is
$(r,\ell,t,C,\alpha)$-good for $(X,Y\cup Z_1\cup\cdots\cup Z_{r-3})$.
Finally, applying Lemma~\ref{lem:denseembed} we find that there is a copy
of $Z_\ell^{r,t}(T_1,\ldots,T_\ell)$ in $G$.

\smallskip

As just explained, the following proposition is the main missing tool for
the proof of Theorem~\ref{r-acyclic}.

\begin{prop}\label{3-to-r-rich}
  For every $r>3$, $\ell,t \in \NATS$ and $d,\gamma > 0$ there exist
  $\ell^*,t^*\in\NATS$ such that for every $\alpha>0$ and $C\in\NATS$, there
  exist $\eps_1>0$ and $C^*\in\NATS$, such that for every $0<\eps<\eps_1$ the
  following holds.
  
  Let $G$ be a graph, and let $X$, $Y$ and $Z_1,\ldots,Z_{r-3}$ be disjoint
  subsets of $V(G)$, with $|Y| = |Z_j|$ for each $j \in [r-3]$. Let
  $Z:=Z_1\cup\cdots\cup Z_{r-3}$. Suppose that
  $(Y,Z_j)$ and $(Z_i,Z_j)$ are $(\eps,d)$-regular for each $i \ne j$, and that
  $$|N(x) \cap Z_j| \ge \left( \frac{1}{2} + \gamma \right) |Z_j|$$ for every $x
  \in X$ and $j \in [r-3]$.
  
  If $(X,Y)$ is $(C^*,\alpha)$-dense in copies of $Z_{\ell^*}^{3,t^*}$, then
  there is some $S\in\cF_\ell^{r,t}(Y\cup Z)$ such that
  $S$ is $(r,\ell,t,C,\alpha)$-good for $(X,Y\cup Z)$.
\end{prop}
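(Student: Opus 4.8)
The plan is to exploit four sources of slack — the Regularity-Lemma clean-up of the clusters, the freedom to take $\ell^*,t^*\gg\ell,t$ and $C^*\gg C$ with $\eps$ chosen last, the Counting Lemma inside the $Z_j$'s together with the hypothesis $|N(x)\cap Z_j|\ge(\tfrac12+\gamma)|Z_j|$, and a Sauer--Shelah-type argument to organise the $2^\ell$ sets $S_I$.

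\textbf{Step 1: a well-placed $3$-Zykov structure.} First I would clean up: by $(\eps,d)$-regularity of each pair $(Y,Z_j)$, all but at most $(r-3)\eps|Y|$ vertices of $Y$ have at least $(d-\eps)|Z_j|$ neighbours in \emph{every} $Z_j$; call this set $\hat Y$. Since $(X,Y)$ is $(C^*,\alpha)$-dense in copies of $Z_{\ell^*}^{3,t^*}$ there are at least $2^{-\ell^*}\alpha|Y|^{s^*}$ functions $S\in\cF_{\ell^*}^{3,t^*}(Y)$ which are $(3,\ell^*,t^*,C^*,\alpha)$-good for $(X,Y)$, where $s^*=2^{\ell^*}t^*$. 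A first-moment bound shows that at most $(r-3)\eps s^*|Y|^{s^*}$ of \emph{all} $S\in\cF_{\ell^*}^{3,t^*}(Y)$ have a part meeting $Y\setminus\hat Y$, so provided $\eps<\alpha\,2^{-\ell^*-1}/((r-3)s^*)$ (legitimate, since $\eps$ is picked after $\alpha,\ell^*,t^*$) fewer than half of the good $S$ do; fix a good $S$ all of whose parts lie in $\hat Y$, with witnessing edge sets $E_1,\dots,E_{\ell^*}\subset E(X)$, $\ol d(E_i)\ge 2^{-\ell^*}\alpha C^*$. For $I\subset[\ell]$ put $\tilde S_I:=\bigcup\{S_J:J\subset[\ell^*],\ J\cap[\ell]=I\}$; these are $2^\ell$ pairwise-disjoint subsets of $\hat Y$, each of size $2^{\ell^*-\ell}t^*$. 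The key feature of this definition is that, because $S$ is good and $J\cap[\ell]=I$ forces $i\in J\iff i\in I$, for each $i\in[\ell]$ every edge of $E_i$ admits a fixed orientation $e_i=x_iy_i$ with $\tilde S_I\subset N(x_i)$ whenever $i\in I$ and $\tilde S_I\subset N(y_i)$ whenever $i\notin I$; hence \emph{any} choice of $t$-sets $S'_I\subset\tilde S_I$ already fulfils the ``$S_I$-part'' of the relation $\tpl{e}{\ell}\to\cdot$ along these edges, for the $r$-version function we are about to build.

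\textbf{Step 2: choosing the independent sets $S'_j$.} Now I would select $S'_j\subset Z_j$ ($j\in[r-3]$) so that the $S'_j$ are pairwise complete, a positive fraction of the edge sets $E_i$ keeps a positive fraction of edges all of whose endpoints are complete to $\bigcup_j S'_j$, and a positive fraction of the parts $S_J$ keeps at least $t$ vertices complete to $\bigcup_j S'_j$. One picks $\bigcup_j S'_j$ at random among the $\ge c|Y|^{(r-3)t}$ copies of $K_{r-3}(t)$ across $Z_1,\dots,Z_{r-3}$ produced by the Counting Lemma (handling pairwise-completeness automatically). The minimum-degree hypothesis gives $|N(x,y)\cap Z_j|\ge 2\gamma|Z_j|$ for every edge $xy\in E(X)$, so (Counting Lemma and Fact~\ref{prop:subpair}) each edge of each $E_i$ is compatible with a random such copy with probability at least a constant $q_E=q_E(d,\gamma,t,r)>0$, and each vertex of $\hat Y$ is complete to a random copy with probability at least a constant $q_S$, whence each part $S_J$ becomes \emph{heavy} (has $\ge t$ complete vertices) with probability $\ge q_S/2$ once $t^*\ge 2t/q_S$. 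A careful averaging then yields a single choice of $\bigcup_j S'_j$ for which a positive-fraction set $\mathcal A\subset[\ell^*]$ of edge sets each keep a positive fraction of compatible edges, and a positive-fraction family $\mathcal H\subset2^{[\ell^*]}$ of parts are heavy.

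\textbf{Step 3: Sauer--Shelah and assembly.} Restricting $\mathcal H$ to the coordinates in $\mathcal A$ divides its size by at most $2^{\ell^*-|\mathcal A|}$, so for $\ell^*$ large enough (in terms of $\ell,q_E,q_S$) the Sauer--Shelah lemma~\cite{Sauer} provides $K\subset\mathcal A$ with $|K|=\ell$ shattered by $\mathcal H$: for every $I\subset K$ some heavy $J\in\mathcal H$ has $J\cap K=I$. Relabelling $K$ as $[\ell]$, each $\tilde S_I$ now contains a whole heavy part, hence $\ge t$ vertices complete to $\bigcup_j S'_j$; take $S'_I$ to be $t$ of them. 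Then $S:=\{S'_I:I\subset[\ell]\}\cup\{S'_j:j\in[r-3]\}$ consists of pairwise disjoint $t$-sets with each $S'_j$ complete to every other $S'_j$ and to every $S'_I$, so $S\in\cF_\ell^{r,t}(Y\cup Z)$ is proper. Choosing $\ell^*$ large enough that $|\mathcal A|\ge\ell$ and the shattering succeeds, $t^*$ large enough that the heavy parts are still a positive fraction, and $C^*\ge(2/q_E)2^{\ell^*-\ell}C$ so the pruned edge sets $E'_1,\dots,E'_\ell$ ($i\in K$) still have $\ol d(E'_i)\ge 2^{-\ell}\alpha C$, we are done: for any $e_1\in E'_1,\dots,e_\ell\in E'_\ell$ and each $i$, the orientation $e_i=x_iy_i$ from Step~1 gives $S'_I\subset N(x_i)$ (resp.\ $N(y_i)$) when $i\in I$ (resp.\ $i\notin I$) and $S'_j\subset N(x_i)\cap N(y_i)$ for all $j$ by Step~2, i.e.\ $(x_i,y_i)\to_i S$; hence $\tpl{e}{\ell}\to S$ and $S$ is $(r,\ell,t,C,\alpha)$-good for $(X,Y\cup Z)$.

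\textbf{Main obstacle.} The delicate point is Step~2: the \emph{single} choice of $S'_1,\dots,S'_{r-3}$ must simultaneously preserve pairwise completeness, leave enough edge sets $E_i$ with a positive density of compatible edges, and leave enough parts heavy for Sauer--Shelah to find a shattered $K$ inside the surviving edge-indices. None of these three events holds for ``almost all'' of the random copies $\bigcup_j S'_j$ (the relevant densities are only bounded away from $0$, not close to $1$), so reconciling them requires a genuinely careful probabilistic/counting argument, and it is precisely this reconciliation that forces $\ell^*,t^*\gg\ell,t$ and $C^*\gg C$.
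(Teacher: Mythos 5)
Your Steps 1 and 3 are structurally sound, and your overall skeleton (fix a good function in $\cF_{\ell^*}^{3,t^*}(Y)$ with huge parameters, use the degree hypothesis into the $Z_j$ together with the Counting Lemma to find a copy of $K_{r-3}(t)$ in $Z$ compatible with many edges of many $E_i$, then carve the $r$-type function out of the parts) is essentially the paper's. But Step~2 --- which you yourself flag as the main obstacle --- is a genuine gap, not something a ``careful averaging'' can fill. From the bounds you state you only know that, for a random transversal copy $T$ of $K_{r-3}(t)$, each edge of each $E_i$ is compatible with probability at least a constant $q_E<1$, and each part $S_J$ is heavy with probability at least a constant $q_S<1$. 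Expectation bounds of this kind do not produce a single $T$ realising both: the copies could split into a $q_E$-fraction on which the edge sets survive but no part is heavy, and a disjoint $q_S$-fraction on which many parts are heavy but no edges survive, and nothing in your argument excludes this. Worse, the per-part heaviness probability genuinely cannot be pushed towards $1$ by enlarging $t^*$: the $(\eps,d)$-regularity of $(Y,Z_j)$ says nothing about a constant-size set $S_J\subset \hat Y$, so all $t^*$ vertices of $S_J$ could have identical neighbourhoods in $Z$, in which case $S_J$ is heavy only with probability roughly $d^{(r-3)t}$ no matter how large $t^*$ is. So with an arbitrary good function (which is all you select in Step~1), the reconciliation you need is simply not available.

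The missing idea is the paper's Lemma~\ref{countST} together with Claim~\ref{clm:wellbvd}: \emph{before} any averaging over copies of $K_{r-3}(t)$, one uses the regularity of the pairs $(Y,Z_j)$ to show that all but a $\sqrt{\eps}$-fraction of $t^*$-subsets $S\subset Y$ have the property that all but a $\mu'$-fraction of $(r-3)t$-tuples $T\subset Z$ satisfy $|N(T)\cap S|\ge t$ (this requires an inductive partition/quasi-randomness argument, not just minimum degrees into $Z$); since $(X,Y)$ is $(C^*,\alpha)$-dense, one can then choose a good function $S^*$ \emph{all} of whose $2^{\ell^*}$ parts are of this well-behaved type. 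Taking $\mu'=2^{-\ell^*}\mu$, the event ``some part fails for $T$'' has density at most $\mu$ among tuples, while the pigeonhole argument (with $\ell^*=\ell/(2\mu)$) yields at least $2\mu|Z|^{(r-3)t}$ copies $T$ that are edge-rich for at least $\ell$ indices; now the two densities compare in the right direction, so a single $T$ is edge-rich for an $\ell$-set $L$ and heavy for \emph{every} part, and one simply uses the parts $S^*_I$ with $I\subset L$ --- no Sauer--Shelah step is needed. Your Sauer--Shelah twist is an elegant way to organise the final selection, but it does not remove the need for this pre-selection of a well-behaved $S^*$, which is the real content of the proposition; as written, your proof has a hole exactly there.
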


For the proof of this proposition, we combine an application of the
Counting Lemma and two uses of the pigeonhole principle. As a preparation
for these steps we need to show that there exists a family
$S^*\in\cF_{\ell^*}^{3,t^*}$ which is $(3,\ell^*,t^*,C^*,\alpha)$-good for
$(X,Y)$ and `well-behaved' in the following sense. For each of the sets
$S^*_I\subset Y$ given by~$S^*_I$ only a small positive fraction of the
$(r-3)t$-element sets in~$Z$ has a common neighbourhood in $S^*_I$ of less
than~$t$ vertices. To this end we shall use the following lemma.

Recall that for a set
$T$ of vertices in a graph $G$, we write
\[N(T)\colon = \bigcap_{x\in T} N(x)\,.\]

\begin{lemma}\label{countST}
  For all $r,t \in \NATS$ and $\mu,d > 0$, there exist $t^* =
  t^*(r,t,\mu,d) \in \NATS$ and $\eps_0 = \eps_0(r,t,\mu,d) > 0$ such that for
  all $0<\eps<\eps_0$ the following holds.
  
  Let $G$ be a graph, and suppose that $Y$ and $Z_1,\ldots,Z_{r-3}$ are disjoint
  subsets of $V(G)$ such that $(Y,Z_j)$ is $(\eps,d)$-regular for each $j \in
  [r-3]$. Let $Z := Z_1 \cup \ldots \cup Z_{r-3}$, and define
  \[\cB(S) := \Big\{ T \in \binom{Z}{(r-3)t} \,:\, |N(T) \cap S | < t \Big\}\]
  for each $S \subset Y$. Then we have
  \[\cS:=\Big\{ S \in \binom{Y}{t^*} \,:\, |\cB(S)| \ge \mu
  |Z|^{(r-3)t} \Big\}  \; \le \; \sqrt{\eps} |Y|^{t^*}\,.\]
\end{lemma}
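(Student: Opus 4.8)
The plan is to prove Lemma~\ref{countST} by a double-counting argument, combining an iterated application of $\eps$-regularity with an elementary estimate on the hypergeometric distribution. Throughout write $m:=(r-3)t$ and set $\eta:=(d/2)^m$; call a set $T\in\binom{Z}{m}$ \emph{rich} if $|N(T)\cap Y|\ge\eta|Y|$. (If $|Y|<t^*$ then $\cS=\emptyset$ and there is nothing to prove, so we may assume $|Y|$, and hence $|Z|$, is large compared with $m$ and $t^*$.)

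The key step, and the one I expect to be the main obstacle, is to show that at most $c_0\eps\binom{|Z|}{m}$ of the $m$-subsets $T$ of $Z$ are non-rich, for some $c_0=c_0(r,t)$. To prove this I would fix an index sequence $(j_1,\dots,j_m)\in[r-3]^m$ and count ordered tuples $(z_1,\dots,z_m)$ with $z_i\in Z_{j_i}$, setting $Y_0:=Y$ and $Y_i:=Y_{i-1}\cap N(z_i)$. If $|Y_m|<\eta|Y|$, let $i$ be least with $|Y_i|<(d/2)^i|Y|$; by minimality $|Y_{i-1}|\ge(d/2)^{i-1}|Y|\ge\eps|Y|$, provided $\eps_0\le(d/2)^m$. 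Then $(\eps,d)$-regularity of $(Y,Z_{j_i})$, applied to $Y_{i-1}\subseteq Y$ and $Z_{j_i}$, gives $d(Y_{i-1},Z_{j_i})\ge d-\eps>d/2$, so at most $\eps|Z_{j_i}|$ vertices $z\in Z_{j_i}$ satisfy $|N(z)\cap Y_{i-1}|<\tfrac{d}{2}|Y_{i-1}|$ --- and $z_i$ is one of these \emph{witness} vertices. Choosing $z_1,\dots,z_{i-1}$ freely, then $z_i$ from the small witness set, then $z_{i+1},\dots,z_m$ freely, and summing over $i$, the number of tuples with $|Y_m|<\eta|Y|$ is at most $m\eps\prod_i|Z_{j_i}|$ for a fixed index sequence. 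Summing over all index sequences, and passing from ordered tuples back to $m$-subsets (dividing by $m!$ and comparing $|Z|^m$ with $\binom{|Z|}{m}$), gives the bound $c_0\eps\binom{|Z|}{m}$ on the number of non-rich $T$. The delicate point is calibrating the threshold $(d/2)^i|Y|$ at stage $i$ so that a non-rich tuple is \emph{forced} to use a witness vertex from a set of relative size $<\eps$, while the running common neighbourhood $|Y_{i-1}|$ stays above $\eps|Y|$ so that regularity may be applied.

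Next, for a \emph{rich} $T$ and a uniformly random $S\in\binom{Y}{t^*}$, I would bound the hypergeometric tail: since $|N(T)\cap Y|\ge\eta|Y|$ and $|Y|$ is large, a direct computation gives $\Pr\big[\,|N(T)\cap S|<t\,\big]\le\delta_1$, where $\delta_1\le c_1(t^*)^{t}(1-\eta)^{t^*}$ for some $c_1=c_1(t)$. In particular $\delta_1\to 0$ as $t^*\to\infty$, so I would fix $t^*=t^*(r,t,\mu,d)\ge t$ large enough that $\delta_1$ is as small as the final estimate requires. For non-rich $T$ I use only the trivial bound $\Pr[\,|N(T)\cap S|<t\,]\le1$.

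Finally I would combine these by double counting. Since $|\cB(S)|\ge\mu|Z|^m$ for every $S\in\cS$,
\[\mu|Z|^m\,|\cS|\;\le\;\sum_{S\in\binom{Y}{t^*}}|\cB(S)|\;=\;\sum_{T\in\binom{Z}{m}}\big|\big\{S\in\tbinom{Y}{t^*}:|N(T)\cap S|<t\big\}\big|\;\le\;\big(c_0\eps+\delta_1\big)\binom{|Z|}{m}\binom{|Y|}{t^*},\]
where the two terms on the right come from the non-rich and the rich $T$ respectively. Using $\binom{|Z|}{m}\le|Z|^m$ and $\binom{|Y|}{t^*}\le|Y|^{t^*}$, this gives $|\cS|\le\mu^{-1}(c_0\eps+\delta_1)|Y|^{t^*}$, and choosing $t^*$ large and $\eps_0$ small (in terms of $r,t,\mu,d$) so that $\mu^{-1}(c_0\eps+\delta_1)\le\sqrt\eps$ for $0<\eps<\eps_0$ yields the claimed bound. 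All that then remains is the routine bookkeeping behind the two estimates above.
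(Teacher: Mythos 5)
Your counting of non-rich $T$ via iterated regularity is fine, and the hypergeometric tail estimate for rich $T$ is fine in isolation, but the final calibration step is impossible, and this is a genuine gap rather than bookkeeping. In your scheme $t^*$ (and hence $\delta_1>0$) must be fixed as a function of $r,t,\mu,d$ \emph{before} $\eps$ is given, and the lemma then demands $|\cS|\le\sqrt{\eps}\,|Y|^{t^*}$ for \emph{all} $0<\eps<\eps_0$. Your double count only yields $|\cS|\le\mu^{-1}(c_0\eps+\delta_1)|Y|^{t^*}$, and since $\delta_1$ is a fixed positive constant (for any rich $T$ with $|N(T)\cap Y|$ a proper fraction of $|Y|$, a uniformly random $t^*$-set misses $N(T)\cap Y$ in all but at most $t-1$ positions with probability bounded away from $0$ once $t^*$ is fixed), the requirement $\mu^{-1}(c_0\eps+\delta_1)\le\sqrt{\eps}$ fails as soon as $\eps<(2\delta_1/\mu)^2$. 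The problem is structural, not a matter of sharpening constants: any argument that bounds $|\cS|$ by averaging $|\cB(S)|$ over all $S$ and applying Markov with the fixed threshold $\mu|Z|^{(r-3)t}$ can only show that an $\eps$-\emph{independent} proportion (roughly $\delta_1/\mu$ plus an $O(\eps/\mu)$ term) of the sets $S$ are bad, whereas the lemma needs the proportion of bad $S$ to vanish with $\eps$ while $\mu$ and $t^*$ stay fixed.

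The paper avoids this by keeping the two error sources on different sides of the quantifiers. It constructs the good sets $S=\{v_1,\dots,v_{t^*}\}$ vertex by vertex, maintaining for each $Z_j$ a nested partition into atoms $P_j^k(I)\subset N(I)$, $I\subset\{v_1,\dots,v_k\}$, of prescribed sizes $(d-\eps)^{|I|}(1-d+\eps)^{k-|I|}|Z_j|$; at step $k$ only a set of at most $\eps 2^k(r-3)|Y|$ vertices must be avoided (these are exactly the regularity defects), so at most $\sqrt{\eps}|Y|^{t^*}$ sets $S$ are ever excluded --- this is where the $\eps$-dependence of the bound on $|\cS|$ comes from. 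For each such good $S$, the atom structure shows that the trace $N(z)\cap S$ of a random $z\in Z$ stochastically dominates a binomial $(d-\eps)$-random subset of $S$, and then the binomial tail bound with threshold $t$ (this is where $t^*$ is chosen large) shows that a random $(r-3)t$-tuple $T$ fails with probability at most $\mu$, i.e.\ $|\cB(S)|<\mu|Z|^{(r-3)t}$. In other words, the tail probability is compared against $\mu$ inside the per-$S$ analysis, never against $\sqrt{\eps}$ in the count of bad $S$. To repair your proof you would need to replace the Markov step by an argument showing that every $S\in\cS$ must contain a vertex from an exceptional set of measure $O(\eps)$ --- which is essentially the paper's construction.
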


\begin{proof}
  Choose~$t^*$ sufficiently large such that
  \begin{equation}\label{eq:countST:t*}
    \Pr\Big( \Bin\big( t^*, (d/2)^{(r-3)t} \big) < t \Big) \le
    \mu \,,
  \end{equation}
   where $\Bin(n,p)$ denotes a random variable with binomial distribution,
   and set
  \begin{equation}\label{eq:countST:eps}
    \eps_0:=\min\Big\{\Big(\frac d2 \Big)^{t^*}, \big(t^*\cdot 2^{t^*}(r-3)\big)^{-2}\Big\} \,.
  \end{equation}

  In the first part of the proof we shall construct a family $\cS'$ of at least
  $\binom{|Y|}{t^*}-\sqrt\eps|Y|^{t^*}$ sets~$S\in\binom{Y}{t^*}$. In the
  second part we will then show that $\cS'\subset
  \binom{Y}{t^*}\setminus\cS$, which proves the lemma. For constructing the
  sets $S\in\cS'$ we proceed inductively and shall choose the vertices
  $v_1,\dots,v_{t^*}$ of~$S$ one by one, in each step $k\in[t^*]$ avoiding
  a set $Y_k\subset Y$ of size at most $\eps 2^k(r-3)|Y|$. Clearly,
  by~\eqref{eq:countST:eps}, this gives at least
  $\binom{|Y|}{t^*}-\sqrt\eps|Y|^{t^*}$ choices for~$S$ as desired.

  Indeed, suppose we have already chosen the vertices
  $v_1,\ldots,v_{k-1}$. In addition we have chosen for each $j\in[r-3]$ a
  partition~$P_j^{k-1}$ of~$Z_j$ with the following property (we shall make
  use of these partitions in part two of the proof): for each
  $I\subset\{v_1,\dots,v_{k-1}\}$ we have chosen a part $P_{j}^{k-1}(I)$ of
  size $(d-\eps)^{|I|}(1-d+\eps)^{k-1-|I|}|Z_j|$ such that
  $P_{j}^{k-1}(I)\subset N(I)$. 
  Now we will explain how~$v_k$ can be chosen together with
  partitions~$P_j^{k}$ satisfying the above conditions. For this purpose
  consider the set $Y_k \subset Y$ of vertices~$y$ such that for some
  $j\in[r-3]$ and some $I\subset\{v_1,\dots,v_{k-1}\}$ we have
  \[ |N(y)\cap P_{j}^{k-1}(I)|< (d-\eps)^k|Z_j| \,,\] where
  $P_j^{0}:=\{Z_j\}$ is the trivial partition of~$Z_j$.  The possible
  choices for~$v_k$ now are the vertices in $Y\setminus Y_k$.  The
  partitions~$P_j^{k}$ with $j\in[r-3]$ are defined as follows. For each
  $I'\subset\{v_1,\dots,v_{k-1}\}$ we choose an arbitrary subset~$P$ of
  $N(v_k)\cap P_j^{k-1}(I)$ with $|P|=(d-\eps)^k|Z_j|$, which is possible
  by the choice of~$v_k$, and set
  \begin{equation*}
    P_j^{k}(I') := P_j^{k-1}(I') \setminus P
    \quad\text{and}\quad
    P_j^{k}\big(I'\cup\{z_k\}\big) := P \,.
  \end{equation*}
  Clearly, the partitions defined in this way satisfy that each part
  $P_j^{k}(I)$ is of size $(d-\eps)^{|I|}(1-d+\eps)^{k-|I|}|Z_j|$
  and that $P_j^k(I)\subset N(I)$ as desired.

  It remains to show that $|Y_k| \le \eps 2^k(r-3)|Y|$ as claimed above.
  If this is not true, then for some $j\in[r-3]$ and $I \subset [k-1]$,
  there exist $\eps |Y|$ vertices in $Y$ which have at most $(d-\eps)
  |P_j^k(I)|$ neighbours in $P_j^{k-1}(I)$. Since $|P_j^{k-1}(I)| \ge
  (d-\eps)^{k-1} |Z_j|\ge\eps |Z_j|$ by~\eqref{eq:countST:eps}, this
  contradicts $(\eps,d)$-regularity of $(Y,Z_j)$.

  We now turn to the second part of the proof: We claim that for every $S \in \cS'$ we
  have $|\cB(S)| < \mu |Z|^{(r-3)t}$.  To see this, simply choose a random
  multiset $T \subset Z$ of size $(r-3)t$, and observe that $N(T) \cap S$
  is given by the intersection of $(r-3)t$ sets $S_1,\ldots,S_{(r-3)t}
  \subset S$ chosen (independently) according to the distribution
  \begin{equation*}
    \Pr\big( S_i = I \big) 
    =\frac{\big|\{z\in Z\,:\,I=N(z)\cap S\}\big|}{|Z|}
   \qquad\text{for $I\subset S$}\,.
  \end{equation*}
  By construction we have $|P^{t^*}_j(I)| = (d -
  \eps)^{|I|} (1-d+\eps)^{t^* - |I|} |Z_j| $ for every $j \in [r-3]$
  and $I \subset S$. Hence 
  \begin{equation*}
    \Pr(I \subset S_i)
      \ge\frac{\big| \bigcup_{j=1}^{r-3} \bigcup_{I\subset I'\subset S}P^{t^*}_j(I')\big|}{|Z|}
      = \sum_{I\subset I'\subset S}(d-\eps)^{|I'|} (1-d+\eps)^{t^* - |I'|} 
      =(d-\eps)^{|I|} \,.
  \end{equation*}
  This implies that for every $I\subset S$, we have $\Pr(I \subset S_i)\ge\Pr(I
  \subset S'_i)$ for the random variable~$S'_i$ with the following distribution: for every $u
  \in S$ we take $u\in S'_i$ independently with
  probability $d - \eps$.
  We conclude that
  \begin{equation*}\begin{split}
    \Pr\big(|S_1 \cap \ldots \cap S_{(r-3)t}| \ge t\big)
    & = \Pr\big( I \subset S_1 \cap \dots \cap S_{(r-3)t} \quad\text{for
      some $I\subset S$ with $|I|\ge t$} \big) \\
    & \ge \Pr\big( I \subset S'_1 \cap \dots \cap S'_{(r-3)t} \quad\text{for
      some $I\subset S$ with $|I|\ge t$} \big) \\
    & = \Pr\big(|S'_1 \cap \ldots \cap S'_{(r-3)t}| \ge t\big)
    = \Pr\Big( \Bin\big( t^*, (d - \eps)^{(r-3)t} \big) \ge t \Big) \\
    &\ge 1-\mu \,,
  \end{split}\end{equation*}
  where the last inequality follows from~\eqref{eq:countST:t*}.
  This proves $|\cB(S)| < \mu |Z|^{(r-3)t}$ and hence finishes the proof of
  the lemma.
\end{proof}

We shall now prove Proposition~\ref{3-to-r-rich}.

\begin{proof}[Proof of Proposition~\ref{3-to-r-rich}]
  We start by defining the constants.
  Given $r>3$, $\ell,t \in \NATS$ and $\gamma,d  > 0$, we set
  \begin{equation}\label{eq:3tor:setalphaell}
    \mu:=\frac{\gamma^{(r-3)t}}{8\big((r-3)t\big)!(r-3)^{(r-3)t}}
    \Big(\frac{d}{2}\Big)^{\binom{r-3}{2}t^2} \quad\text{and}\quad
    \ell^*:=\frac{\ell}{2\mu}\,.
  \end{equation}
  Let $t^*$ and $\eps_0$ be given by Lemma~\ref{countST} with input
  $r, t, \mu':=2^{-\ell^*}\mu, d$. Given $\alpha>0$ and $C$, we choose
  \begin{equation}\label{eq:3tor:setepsCstar}
    \eps_1:=\min\Big(\frac{\alpha^2}{2^{4\ell^*+1}},
    \frac{d\gamma}{4(\gamma+1)(r-3)t},\eps_0\Big)\quad\text{and}\quad
    C^*:=\frac{2^{\ell^*}C}{\alpha\mu}\,.
  \end{equation}
  
  Now let $0<\eps<\eps_1$, let $G$ be a graph, and let $X$, $Y$ and
  $Z_1,\ldots,Z_{r-3}$ be disjoint subsets of $V(G)$ as described in the
  statement, so in particular, $(X,Y)$ is $(C^*,\alpha)$-dense in copies of
  $Z_{\ell^*}^{3,t^*}$. The goal is to
  show that there exists $S\in\cF_\ell^{r,t}(Y\cup Z)$ such that $S$ is
  $(r,\ell,t,C,\alpha)$-good for $(X,Y\cup Z)$.
  
  Our first step is to show that there is a `well-behaved' function
  $S^*\in\cF_{\ell^*}^{3,t^*}(Y)$.
  
  \begin{claim}\label{clm:wellbvd} There is a function
  $S^*\in\cF_{\ell^*}^{3,t^*}(Y)$ which is $(3,\ell^*,t^*,C^*,\alpha)$-good for
  $(X,Y)$ and has the property that for every $I\subset[\ell^*]$, the set
  \[\cB(S^*_I)=\Big\{T\in\binom{Z}{(r-3)t}\colon \big|N(T)\cap S^*_I\big|\le
  t\Big\}\]
  in $\binom{Z}{(r-3)t}$ has size at most $2^{-\ell^*}\mu |Z|^{(r-3)t}$.
  \end{claim}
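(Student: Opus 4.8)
The plan is to derive Claim~\ref{clm:wellbvd} from the density hypothesis and Lemma~\ref{countST} by a short union bound and counting argument. Write $s^{*} := 2^{\ell^{*}}t^{*}$, and recall that since $r=3$ here a function in $\cF_{\ell^{*}}^{3,t^{*}}(Y)$ is nothing but an assignment $I\mapsto S_{I}$ of pairwise disjoint $t^{*}$-subsets of $Y$ to the sets $I\subset[\ell^{*}]$, so that $\big|\cF_{\ell^{*}}^{3,t^{*}}(Y)\big|\le\binom{|Y|}{t^{*}}^{2^{\ell^{*}}}\le|Y|^{s^{*}}$. Call a function $S\in\cF_{\ell^{*}}^{3,t^{*}}(Y)$ \emph{well-behaved} if $|\cB(S_{I})|\le 2^{-\ell^{*}}\mu|Z|^{(r-3)t}$ for every $I\subset[\ell^{*}]$, with $\cB(\cdot)$ as in the statement of the claim. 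Our target is a function that is simultaneously $(3,\ell^{*},t^{*},C^{*},\alpha)$-good for $(X,Y)$ and well-behaved.

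The first step is to bound the number of functions that fail to be well-behaved. Applying Lemma~\ref{countST} with the parameters fixed at the start of the proof (namely $r$, $t$, $\mu':=2^{-\ell^{*}}\mu$ and $d$ --- this is precisely where the constants $t^{*}$ and $\eps_{0}$ come from, and the lemma applies since $\eps<\eps_{1}\le\eps_{0}$), we obtain that the collection $\cS\subset\binom{Y}{t^{*}}$ of `bad' sets --- those $S$ with $|\cB(S)|\ge 2^{-\ell^{*}}\mu|Z|^{(r-3)t}$ --- satisfies $|\cS|\le\sqrt{\eps}\,|Y|^{t^{*}}$. Now a function $S\in\cF_{\ell^{*}}^{3,t^{*}}(Y)$ is \emph{not} well-behaved precisely when $S_{I}\in\cS$ for some $I\subset[\ell^{*}]$; bounding the number of choices of the offending index $I$ by $2^{\ell^{*}}$, the number of choices of $S_{I}\in\cS$ by $\sqrt{\eps}\,|Y|^{t^{*}}$, and the number of choices of the remaining $2^{\ell^{*}}-1$ sets by $|Y|^{(2^{\ell^{*}}-1)t^{*}}$, we find that at most $2^{\ell^{*}}\sqrt{\eps}\,|Y|^{s^{*}}$ functions in $\cF_{\ell^{*}}^{3,t^{*}}(Y)$ fail to be well-behaved.

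On the other hand, by hypothesis $(X,Y)$ is $(C^{*},\alpha)$-dense in copies of $Z_{\ell^{*}}^{3,t^{*}}$, so by Definition~\ref{def:good} at least $2^{-\ell^{*}}\alpha|Y|^{s^{*}}$ functions $S\in\cF_{\ell^{*}}^{3,t^{*}}(Y)$ are $(3,\ell^{*},t^{*},C^{*},\alpha)$-good for $(X,Y)$. Since $\eps<\eps_{1}\le\alpha^{2}2^{-4\ell^{*}-1}$ by~\eqref{eq:3tor:setepsCstar}, we have $2^{\ell^{*}}\sqrt{\eps}<2^{-\ell^{*}}\alpha$, and hence the number of good functions strictly exceeds the number of non-well-behaved ones. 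Consequently there is a function $S^{*}\in\cF_{\ell^{*}}^{3,t^{*}}(Y)$ which is both good for $(X,Y)$ and well-behaved, which is exactly the assertion of the claim.

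I do not expect a genuine obstacle here: the argument is just a union bound, and the only thing requiring attention is that $\eps_{1}$ (and $t^{*}$, via Lemma~\ref{countST}) were chosen so that the factor $2^{\ell^{*}}$ lost in the union bound, multiplied by $\sqrt{\eps}$, stays below the density guarantee $2^{-\ell^{*}}\alpha$. One minor bookkeeping discrepancy is that Lemma~\ref{countST} is phrased with the threshold $|N(T)\cap S|<t$ while the claim uses $|N(T)\cap S|\le t$; this is immaterial, since one may simply feed Lemma~\ref{countST} the parameter $t+1$ in place of $t$ (enlarging $t^{*}$ accordingly, which we are free to do), after which $\{T:|N(T)\cap S|<t+1\}=\{T:|N(T)\cap S|\le t\}$ is exactly the set $\cB(S)$ of the claim.
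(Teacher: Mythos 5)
Your argument is correct and essentially identical to the paper's own proof: the same application of Lemma~\ref{countST} with $\mu'=2^{-\ell^*}\mu$, the same union bound giving at most $2^{\ell^*}\sqrt{\eps}\,|Y|^{2^{\ell^*}t^*}$ functions with some bad $S_I$, and the same comparison with the $2^{-\ell^*}\alpha|Y|^{2^{\ell^*}t^*}$ good functions guaranteed by $(C^*,\alpha)$-denseness, using $\eps<\eps_1$ from~\eqref{eq:3tor:setepsCstar}. The $<t$ versus $\le t$ mismatch you flag is present in the paper too and is immaterial since only $|N(T)\cap S^*_I|\ge t$ is used later; just note that your patch of invoking Lemma~\ref{countST} with $t+1$ would also change the size of the sets $T$ from $(r-3)t$ to $(r-3)(t+1)$, so the cleaner fix is simply to read $\cB$ in the Claim with strict inequality.
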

  
  \begin{claimproof}[Proof of Claim~\ref{clm:wellbvd}]
    By Lemma~\ref{countST} (with input $r,t,\mu'=2^{-\ell^*}\mu,d$), the
    total number of `bad' $t^*$-subsets $S'$ of $Y$, i.e., those for which
    $\cB(S')\ge 2^{-\ell^*}\mu|Z|^{(r-3)t}$, is at most
    $\sqrt{\eps}|Y|^{t^*}$. Let $\cS$ be the set of functions $S^*$ in
    $\cF_{\ell^*}^{3,t^*}(Y)$ which do \emph{not} have the property that
    for every $I\subset[\ell^*]$ we have
    $\cB(S^*_I)<2^{-\ell^*}\mu|Z|^{(r-3)t}$. We can obtain any function
    $S^*$ in $\cS$ by taking a set $I\subset [\ell^*]$ and one of the at
    most $\sqrt{\eps}|Y|^{t^*}$ `bad' $t^*$-sets to be $S^*_I$, and
    choosing the $2^{\ell^*}-1$ remaining sets of $S^*$ in any way from
    $\binom{Y}{t^*}$. It follows that
    \[|\cS|\le
    2^{\ell^*}\sqrt{\eps}|Y|^{t^*}|Y|^{(2^{\ell^*}-1)t^*}=2^{\ell^*}\sqrt{\eps}|Y|^{2^{\ell^*}t^*}\,.\]
    
    Since $(X,Y)$ is $(C^*,\alpha)$-dense in copies of $Z_{\ell^*}^{3,t^*}$,
    there are at least $2^{-\ell^*}\alpha|Y|^{2^{\ell^*}t^*}$ functions in
    $\cF_{\ell^*}^{3,t^*}(Y)$ which are $(3,\ell^*,t^*,C^*,\alpha)$-good for
    $(X,Y)$. Since by~\eqref{eq:3tor:setepsCstar} we have
    $2^{-\ell^*}\alpha> 2^{\ell^*}\sqrt{\eps}$, at
    least one of these functions is not in $\cS$, as required.
  \end{claimproof}
  
  For the remainder of the proof, $S^*$ will be a fixed function satisfying the
  conclusion of Claim~\ref{clm:wellbvd}. Since $S^*$ is
  $(3,\ell^*,t^*,C^*,\alpha)$-good for $(X,Y)$, there exist sets
  \[E^*_1,\ldots,E^*_{\ell^*} \subset E(X), \quad \text{with}  \quad
  \ol{d}(E^*_j) \ge 2^{-\ell^*} \alpha C^* \quad \text{ for each } \quad 1 \le j
  \le \ell^*\,,\] such that for every $e_{1} \in E^*_{1}, \ldots, e_{\ell^*} \in
  E^*_{\ell^*}$, we have $\tpl{e}{\ell^*} \to S^*$.
  
  Our next claim comprises two applications of the pigeonhole
  principle to find a copy of $K_{r-3}(t)$ in $Z$.
  
  \begin{claim}\label{3-to-r:claim2}
    There exists a copy $T$ of $K_{r-3}(t)$ with $t$ vertices in $Z_j$ for each
    $j \in [r-3]$, and a set $L \subset [\ell^*]$ of size $|L| = \ell$ such that:
    \begin{enumerate}[label=\rom]
      \item\label{3-to-r:a} $|N(T) \cap S^*_I | \ge t$  for every $I \subset
      [\ell^*]$,
      \item\label{3-to-r:b} $N(T)$ contains at least $\mu
      |E^*_j|$ edges of $E^*_j$, for each $j \in L$.
    \end{enumerate}
  \end{claim}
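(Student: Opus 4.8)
The plan is a weighted double-counting argument over the copies of $K_{r-3}(t)$ sitting inside $Z$ with exactly $t$ vertices in each $Z_j$. Write $\cK$ for the set of such copies; thinking of $T\in\cK$ as a choice of $t$-subsets $T_1\subset Z_1,\dots,T_{r-3}\subset Z_{r-3}$ spanning a $K_{r-3}(t)$, we have $|\cK|\le\prod_j\binom{|Z_j|}{t}\le|Z|^{(r-3)t}$. Recall $N(T)=\bigcap_{v\in V(T)}N(v)$, and observe that an edge $e=xy$ of $G$ satisfies $x,y\in N(T)$ if and only if $V(T)\subset N(e)$; thus for each $j$ the edges of $E^*_j$ contained in $N(T)$ are exactly the edges of $E^*_j$ extending to $e+T$.

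The first step is to show that \emph{every} edge of \emph{every} $E^*_j$ extends to many members of $\cK$. Fix $j\in[\ell^*]$ and $e=xy\in E^*_j$. Since $|N(x)\cap Z_k|,|N(y)\cap Z_k|\ge(\tfrac12+\gamma)|Z_k|$, the set $W_k:=N(x)\cap N(y)\cap Z_k$ has $|W_k|\ge2\gamma|Z_k|$; pick subsets $W'_k\subset W_k$ of a common size $m'\ge\gamma|Z_1|$. By Fact~\ref{prop:subpair} each pair $(W'_i,W'_k)$ is $(\eps/\gamma,d-\eps)$-regular, and since $\eps<\eps_1\le\frac{d\gamma}{4(\gamma+1)(r-3)t}$ by~\eqref{eq:3tor:setepsCstar} we get $d-\eps-\tfrac{\eps}{\gamma}(r-3)t\ge d/2$. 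Applying the Counting Lemma with reduced graph $K_{r-3}$ and the homomorphism $K_{r-3}(t)\to K_{r-3}$ sending part $k$ to vertex $k$, and bounding $|\Aut(K_{r-3}(t))|\le((r-3)t)!$, we obtain at least
\[\frac{1}{((r-3)t)!}\Big(\frac d2\Big)^{\binom{r-3}{2}t^2}(m')^{(r-3)t}\ \ge\ \frac{(d/2)^{\binom{r-3}{2}t^2}\gamma^{(r-3)t}}{((r-3)t)!\,(r-3)^{(r-3)t}}|Z|^{(r-3)t}\ =\ 8\mu|Z|^{(r-3)t}\]
members $T\in\cK$ with $V(T)\subset N(e)$, the last equality being the definition~\eqref{eq:3tor:setalphaell} of $\mu$. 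Summing over $e\in E^*_j$ (and noting $|E^*_j|>0$ since $\ol{d}(E^*_j)\ge2^{-\ell^*}\alpha C^*>0$), this rephrases as $\sum_{T\in\cK}|E^*_j\cap E(N(T))|\ge8\mu|Z|^{(r-3)t}\cdot|E^*_j|$.

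Now put $\rho_j(T):=|E^*_j\cap E(N(T))|/|E^*_j|\in[0,1]$ and $L(T):=\{j\in[\ell^*]:\rho_j(T)\ge\mu\}$ — precisely the set sought in~\ref{3-to-r:b}. Dividing the last inequality by $|E^*_j|$ and summing over $j\in[\ell^*]$ gives $\sum_{T\in\cK}\sum_{j\in[\ell^*]}\rho_j(T)\ge8\mu\ell^*|Z|^{(r-3)t}$, while for each $T$ we have $\sum_{j}\rho_j(T)\le|L(T)|+\ell^*\mu$ (each term is at most $1$, and is less than $\mu$ off $L(T)$); hence $\sum_{T\in\cK}|L(T)|\ge8\mu\ell^*|Z|^{(r-3)t}-\ell^*\mu|\cK|\ge7\mu\ell^*|Z|^{(r-3)t}$. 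Next discard the \emph{bad} copies: call $T$ bad if $|N(T)\cap S^*_I|\le t$ for some $I\subset[\ell^*]$, i.e.\ $T\in\bigcup_{I\subset[\ell^*]}\cB(S^*_I)$. By Claim~\ref{clm:wellbvd} there are at most $2^{\ell^*}\cdot2^{-\ell^*}\mu|Z|^{(r-3)t}=\mu|Z|^{(r-3)t}$ of them, each contributing at most $\ell^*$ to the sum, so $\sum_{T\text{ good}}|L(T)|\ge6\mu\ell^*|Z|^{(r-3)t}$. Since there are at most $|Z|^{(r-3)t}$ good $T$, some good $T$ has $|L(T)|\ge6\mu\ell^*\ge3\ell\ge\ell$, using $\ell^*\ge\ell/(2\mu)$ from~\eqref{eq:3tor:setalphaell}. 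For this $T$ property~\ref{3-to-r:a} holds because $T$ is not bad, and taking any $L\subset L(T)$ with $|L|=\ell$ gives~\ref{3-to-r:b}, which finishes the proof.

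I expect the only genuinely delicate point to be the first step: arranging the Counting Lemma estimate to collapse to \emph{exactly} $8\mu|Z|^{(r-3)t}$ forces one to pass to common-size subsets $W'_k$, to use $\eps<\eps_1$ to absorb the regularity error term $\tfrac{\eps}{\gamma}(r-3)t$ into $d/2$, and to line the crude bound $|\Aut(K_{r-3}(t))|\le((r-3)t)!$ up against the $((r-3)t)!$ built into the definition of $\mu$; everything afterwards is routine double counting and pigeonhole, with the factor $8$ of slack in $\mu$ (together with the factor $\tfrac12$ in $\ell^*=\ell/(2\mu)$) absorbing the losses incurred when passing from $\sum_T\rho_j(T)$ to $\sum_T|L(T)|$ and when removing the bad copies.
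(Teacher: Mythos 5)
Your proposal is correct and follows essentially the same route as the paper: the Counting Lemma (via Fact~\ref{prop:subpair} and the choice of $\eps_1$) gives that every edge of every $E^*_j$ extends to at least $8\mu|Z|^{(r-3)t}$ aligned copies of $K_{r-3}(t)$, and then a double count over pairs $(T,j)$ combined with discarding the at most $\mu|Z|^{(r-3)t}$ copies hitting $\bigcup_I\cB(S^*_I)$ from Claim~\ref{clm:wellbvd} yields the desired $T$ and $L$. The only difference is bookkeeping: you run a single weighted count with densities $\rho_j(T)$ and threshold $\mu$, whereas the paper applies the pigeonhole principle twice (first with threshold $4\mu$ for each $j$, then over pairs $(T,j)$) before removing the bad copies --- both give $|L(T)|\ge\ell$ with room to spare.
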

  
  \begin{claimproof}[Proof of Claim~\ref{3-to-r:claim2}]
    By assumption, for every $x \in X$ and $j \in [r-3]$ we have
    \[|N(x) \cap Z_j| \ge \left( \frac{1}{2} + \gamma \right)
    |Z_j|\,,\]
    and so each edge $e \in E^*_1 \cup
    \ldots \cup E^*_{\ell^*}$ has at least $\gamma |Z_j|$ common neighbours in
    $Z_j$. By Fact~\ref{prop:subpair}, the common neighbours of $e$ in $Z_i$ and
    $Z_j$ form an $(\eps/\gamma,d-\eps)$-regular pair for each $1\le i<j\le
    r-3$. By~\eqref{eq:3tor:setepsCstar} we have
    $d-\eps-(r-3)t\eps/\gamma>d/2$.
    Hence, applying the Counting Lemma with~$d$
    replaced by $d-\eps$ and~$\eps$ replaced by $\eps/\gamma$ to the graph $H=K_{r-3}(t)$, it follows
    that there are at least 
    \begin{multline*}
        \frac1{\Aut(H)}\Big(d-\eps-\frac\eps\gamma|H|\Big)^{e(H)}\Big(\frac{\gamma|Z|}{r-3}\Big)^{|H|}
        \\ \ge\frac1{\big((r-3)t\big)!}\Big(\frac
        d2\Big)^{\binom{r-3}{2}t^2}\Big(\frac{\gamma|Z|}{r-3}\Big)^{(r-3)t}
        \geByRef{eq:3tor:setalphaell} 8\mu |Z|^{(r-3)t}
    \end{multline*}
    copies of $K_{r-3}(t)$ in
    $N(e) \cap Z$, each with $t$ vertices in each $Z_j$.
   
    There are therefore, for each $j \in [\ell^*]$, at least $8\mu
    |Z|^{(r-3)t} |E^*_j|$ pairs $(e,T)$, where $e \in E^*_j$ and $T$ is a copy
    of $K_{r-3}(t)$ as described, such that $T \subset N(e)$, or equivalently $e
    \subset N(T)$. Since we have
    \[8\mu
    |Z|^{(r-3)t} |E^*_j|=4 \mu |Z|^{(r-3)t}|E^*_j|+4 \mu
    |E^*_j||Z|^{(r-3)t}\,,\]
    by the pigeonhole principle, it follows that
    there are at least $4 \mu |Z|^{(r-3)t}$ copies of $K_{r-3}(t)$ in~$Z$ each of which
    has at least $4 \mu |E^*_j|$ edges of $E^*_j$ in its common
    neighbourhood. Let us denote by $\cT_j$ the collection of such copies of
    $K_{r-3}(t)$. For a copy $T$ of $K_{r-3}(t)$, let $L(T) = \big\{ j : T
    \in \cT_j \big\}$.
    
    We claim that there is a set $\cT$ containing at least $2\mu
    |Z|^{(r-3)t}$ copies $T$ of $K_{r-3}(t)$ in $Z$, each with $|L(T)|\ge\ell$.
    Indeed, this follows once again by the pigeonhole principle, since there are at least
    \[\ell^*\cdot 4 \mu |Z|^{(r-3)t} \,\eqByRef{eq:3tor:setalphaell}\, \ell
    |Z|^{(r-3)t}+\ell^*\cdot 2\mu|Z|^{(r-3)t}\]
    pairs $(T,j)$ with $T \in \cT_j$.
    
    Now, recall that $S^*$ satisfies the conclusion of Claim~\ref{clm:wellbvd},
    i.e., for each $I\subset[\ell^*]$, there are at most
    $2^{-\ell^*}\mu|Z|^{(r-3)t}$ sets $T \in \binom{Z}{(r-3)t}$ such
    that $|N(T) \cap S^*_I | \le t$. Since $|\cT|\ge 2\mu|Z|^{(r-3)t}$, there
    is a copy $T$ of $K_{r-3}(t)\in\cT$ such that for each $I\subset[\ell^*]$,
    we have $|N(T)\cap S^*_I|\ge t$. If we let~$L$ be any subset of $L(T)$ of size
    $\ell$, then $T$ and $L$ satisfy the conclusions of the claim.
  \end{claimproof}
  
  Let $T$ and $L$ be as given by Claim~\ref{3-to-r:claim2} and for each $j \in L$ let
  $E_j\subset X$ be a set of $\mu|E^*_j|$ edges of $E^*_j$ contained in $N(T)$ as
  promised by~Claim~\ref{3-to-r:claim2}\ref{3-to-r:b}.
  We construct a
  function $S \in \cF_\ell^{r,t}(Y)$ by choosing, for each $I \subset L$, a
  subset $S_I \subset S^*_I$ of size~$t$ in $N(T)\cap Y$ (which is possible
  by Claim~\ref{3-to-r:claim2}\ref{3-to-r:a}), and letting the sets $S_i$,
  $i\in[r-3]$, be the parts of $T$.
  
 \begin{claim}\label{3-to-r:claim3}
    $S$ is $(r,\ell,t,C,\alpha)$-good for $(X,Y \cup Z)$.
  \end{claim}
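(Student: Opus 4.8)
The plan is to verify Definition~\ref{def:good} directly, taking as witnesses the edge sets $\{E_j\}_{j\in L}$ and the function $S$ constructed immediately before the claim. Fix a bijection between $L$ and $[\ell]$, so that $S$ becomes an element of $\cF_\ell^{r,t}(Y\cup Z)$ in the literal sense of the definition and $\{E_j\}_{j\in L}$ becomes an $\ell$-tuple of subsets of $E(X)$. One first notes that $S\in\cF_\ell^{r,t}(Y\cup Z)$: the sets $S_I$ ($I\subset L$) are pairwise disjoint, being subsets of the pairwise disjoint sets $S^*_I\subset Y$; the sets $S_i$ ($i\in[r-3]$), being the parts of the copy $T$ of $K_{r-3}(t)$ in $Z$, are pairwise disjoint, lie in $Z$, and hence are disjoint from every $S_I$; and $S$ is proper, since $T$ already contains all edges between distinct parts $S_i,S_{i'}$, while every vertex of $S_I\subset N(T)\cap Y$ is adjacent to all of $V(T)$, in particular to every vertex of each $S_i$.

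Next I would check the average-degree requirement. Since $E_j\subset E^*_j$ we have $V(E_j)\subset V(E^*_j)$, and therefore
\[
\ol{d}(E_j)=\frac{2|E_j|}{|V(E_j)|}\ \ge\ \frac{2\mu|E^*_j|}{|V(E^*_j)|}\ =\ \mu\,\ol{d}(E^*_j)\ \ge\ \mu\cdot 2^{-\ell^*}\alpha C^*\ =\ C\ \ge\ 2^{-\ell}\alpha C,
\]
the penultimate equality being the choice $C^*=2^{\ell^*}C/(\alpha\mu)$ from~\eqref{eq:3tor:setepsCstar}, and the last inequality holding since $\alpha\le 1$.

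The main step is to prove that $\tpl{e}{\ell}\to S$ for all $e_1\in E_1,\dots,e_\ell\in E_\ell$. Since this relation just asks that $e_j\to_j S$ hold separately for each index $j$, it suffices to fix $j\in L$ and an arbitrary edge $e_j=xy\in E_j$ and show $e_j\to_j S$. The key input is that $e_j\in E^*_j$ and that $S^*$ is $(3,\ell^*,t^*,C^*,\alpha)$-good for $(X,Y)$ with witnessing sets $E^*_1,\dots,E^*_{\ell^*}$: completing $e_j$ to a tuple in $E^*_1\times\dots\times E^*_{\ell^*}$ (possible as these sets are non-empty, having positive average degree) gives $\tpl{e}{\ell^*}\to S^*$, hence $e_j\to_j S^*$; relabelling $x,y$ if necessary, we may assume $(x,y)\to_j S^*$, i.e.\ $\bigcup_{I\subset[\ell^*],\,j\in I}S^*_I\subset N(x)$ and $\bigcup_{I\subset[\ell^*],\,j\notin I}S^*_I\subset N(y)$. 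I would then verify $(x,y)\to_j S$ clause by clause: each $S_i$ ($i\in[r-3]$) lies in $V(T)$, and $e_j\in N(T)$ forces $x,y\in N(T)$, so $S_i\subset V(T)\subset N(x)\cap N(y)=N(x,y)$; for $I\subset L$ with $j\in I$ one has $S_I\subset S^*_I\subset N(x)$ by the construction of $S$ and the chosen orientation, so $\bigcup_{I\subset L,\,j\in I}S_I\subset N(x)$, and symmetrically $\bigcup_{I\subset L,\,j\notin I}S_I\subset N(y)$. Hence $(x,y)\to_j S$, so $e_j\to_j S$; combining over all $j$ and all choices $e_j\in E_j$ yields $\tpl{e}{\ell}\to S$. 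Together with the two previous paragraphs this shows that $S$ is $(r,\ell,t,C,\alpha)$-good for $(X,Y\cup Z)$.

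I expect the only delicate point to be notational bookkeeping: carrying the relabelling $L\leftrightarrow[\ell]$ consistently through both $S$ and the sets $E_j$, and --- crucially --- re-using for $e_j\to_j S$ precisely the orientation of $e_j$ extracted from $e_j\to_j S^*$. Beyond that, no estimate is needed other than the identity $\mu\cdot 2^{-\ell^*}\alpha C^*=C$.
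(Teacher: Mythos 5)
Your proof is correct and is essentially the paper's own argument: the same witnesses (the sets $E_j$, $j\in L$, and the constructed $S$), the same three facts (edges of $E_j$ lie in $N(T)$, $S_I\subset S^*_I$ together with the goodness of $S^*$, and $|E_j|\ge\mu|E^*_j|$ with $C^*=2^{\ell^*}C/(\alpha\mu)$), only spelled out in more detail, in particular the orientation bookkeeping that the paper leaves implicit. The tacit use of $2^{-\ell}\alpha\le 1$ in the final degree bound is shared with the paper and harmless in the intended application.
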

  \begin{claimproof}[Proof of Claim~\ref{3-to-r:claim3}]
    Recall that $|L| = \ell$, and assume without loss of generality that $L
    = \{1,\ldots,\ell\}$. 
    By the choice of~$T$ and the definition of the sets $S_I$ with $I
    \subset L$ and the sets~$S_i$ with
    $i\in[r-3]$, we have that $S_i$ is completely adjacent to each $S_{i'}$
    with $i\neq i'$, to each~$S_I$, and to each edge $e\in \bigcup_{j\in L} E_j$.
    Since $\tpl{e}{\ell^*} \to S^*$ for each
    $\tpl{e}{\ell^*} \in E^*_1 \times \ldots \times E^*_{\ell^*}$, it
    follows that $\tpl{e}{\ell} \to S$ for each $\tpl{e}{\ell} \in E_1
    \times \ldots \times E_{\ell}$. Finally, for each $j\in L$, since
    $|E_j|\ge\mu|E^*_j|$, we have
    \[\ol{d}(E_j)\ge \mu\ol{d}(E^*_j)\ge \mu2^{-\ell^*}\alpha
    C^*\eqByRef{eq:3tor:setepsCstar} C\,,\]
    as required.
  \end{claimproof}
  
  Thus there exists a function $S \in \cF_\ell^{r,t}(Y)$ which is
  $(r,\ell,t,C,\alpha)$-good for $(X,Y \cup Z)$, as required.
\end{proof}

\begin{remark}
  It is possible to strengthen the conclusion of
  Proposition~\ref{3-to-r-rich}: under the same conditions, $(X,Y\cup
  Z_1\cup\cdots\cup Z_{r-3})$ is $(C,\alpha')$-dense in copies of
  $Z_{\ell}^{r,t}$, for some
  $\alpha'=\alpha'(r,\ell,t,d,\gamma,\alpha)>0$. To see this, observe
  that the proofs of Claims~\ref{clm:wellbvd} and~\ref{3-to-r:claim2} both
  in fact yield a positive density of functions $S^*$ in
  $\cF_{\ell^*}^{3,t^*}(Y)$ and of copies $T$ of $K_{r-3}(t)$, respectively.
  From any such $S^*$ and $T$ can be obtained a function $S$
  which is $(r,\ell,t,C,\alpha)$-good for $(X,Y \cup Z)$.
\end{remark}

We can now deduce Theorem~\ref{r-acyclic}.

\begin{proof}[Proof of Theorem~\ref{r-acyclic}]
  The lower bound is given by Proposition~\ref{lower}, so we are only
  required to prove the upper bound. Let $H$ be an $r$-near-acyclic graph,
  with $r \ge 4$, and let $\gamma > 0$. Because $H$ is $r$-near-acyclic, by
  Observation~\ref{acy=Zyk} there exist trees $T_1,\ldots,T_\ell$ and a
  number $t\in\NATS$ such that $H\subset
  Z_\ell^{r,t}(T_1,\ldots,T_\ell)$. We now set constants as follows. First,
  we choose $d=\gamma$. Given $r$, $\ell$, $t$, $d$ and $\gamma$,
  Proposition~\ref{3-to-r-rich} returns integers $\ell^*$ and $t^*$.  Now
  Proposition~\ref{lem:VC}, with input $\ell^*,t^*$ and $d$, returns
  $\alpha>0$. Next, consistent with Lemma~\ref{lem:denseembed} we set
  $C:=2^{\ell+3}\alpha^{-1}\sum_{i=1}^\ell |T_i|$. Feeding $\alpha$ and $C$
  into Proposition~\ref{3-to-r-rich} yields $\eps_1>0$ and $C^*$. Putting
  $C^*$ into Proposition~\ref{lem:VC} yields a constant $C'$. We choose
  \begin{equation}\label{eq:r-acyclic:eps}
    k_0:=2r/\gamma\quad\text{and}\quad \eps:=\min(\eps_1,\gamma)\,.
  \end{equation}
  Finally, from the minimum degree form of the Szemer\'edi Regularity
  Lemma, with input $\eps$, $d$, $\delta=(\tfrac{r-3}{r-2}+3\gamma)$ and
  $k_0$, we obtain a constant~$k_1$.
  
  Let $G$ be an $H$-free graph on $n>k_1$ vertices, with
  $\delta(G) \ge \left( \frac{r-3}{r-2} + 3\gamma \right) n$. We shall prove
  that $\chi(G)\le 2\cdot2^{2k_1}C'$.
  First, applying the minimum degree form of the Szemer\'edi Regularity Lemma,
  we obtain a partition $V_0 \cup \ldots \cup V_k$ of $V(G)$, with reduced graph
  $R$, where $\delta(R) \ge \left( \frac{r-3}{r-2} + \gamma \right) k$. We form
  a second partition by setting
  \begin{eqnarray*}
    X(I_1,I_2) \,:=\, \bigg\{ v \in V(G) \,\colon\,   i \in I_1 &
    \Leftrightarrow & |N(v) \cap V_i| \ge \gamma |V_i|\\
    \textup{ and} \quad  i \in I_2 & \Leftrightarrow &  |N(v) \cap V_i| \ge
    \left( \frac{1}{2} + \gamma \right) |V_i| \bigg\}
  \end{eqnarray*}
  for each pair of sets $I_2 \subset I_1 \subset [k]$. It obviously suffices to
  establish that for each $I_1$ and $I_2$ we have
  $\chi\big(G[X(I_1,I_2)]\big)\le 2C'$.
  
  Hence let $I_2 \subset I_1 \subset [k]$ be fixed. Since $\chi\big(G[X(I_1,I_2)]\big)\le 2C'$
  is obvious when $X(I_1,I_2)$ is empty, assume it is non-empty. Then the
  minimum degree condition on $G$ allows us to establish the following claim.
  
 \begin{claim}\label{r-acyclic:claim}
    There exist distinct clusters $Y,Y' \in I_1$ and
    $Z_1,Z'_1,\ldots,Z_{r-3},Z'_{r-3} \in I_2$ such that
    $(Y,Z_i),(Y',Z'_i),(Z_i,Z_j)$ and $(Z'_i,Z'_j)$ are $(\eps,d)$-regular
    for
    every pair $\{i,j\} \subset [r-3]$.
  \end{claim}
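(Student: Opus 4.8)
The plan is to work almost entirely inside the reduced graph $R$: the minimum degree hypothesis on $G$ will be used only to force $I_1$ and $I_2$ to be large, after which the two gadgets are pulled out of $R$ by a greedy clique argument. Fix a vertex $v\in X(I_1,I_2)$, which exists by assumption. First I would bound $|I_1|$ and $|I_2|$ from below. Every cluster $V_i$ with $i\notin I_1$ contains fewer than $\gamma|V_i|$ neighbours of $v$, and $|V_0|\le\eps n$, so $v$ sends more than $\big(\tfrac{r-3}{r-2}+3\gamma-\gamma-\eps\big)n\ge\big(\tfrac{r-3}{r-2}+\gamma\big)n$ edges into clusters of $I_1$ (using $\eps\le\gamma$); since each cluster has at most $n/k$ vertices this gives $|I_1|\ge\big(\tfrac{r-3}{r-2}+\gamma\big)k$. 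Splitting the neighbourhood of $v$ according to whether a cluster lies in $I_2$, in $I_1\setminus I_2$, or outside $I_1$ (contributing at most $|V_i|$, at most $(\tfrac12+\gamma)|V_i|$, and at most $\gamma|V_i|$ edges respectively) and then using $|I_1|\le k$ and $\eps\le\gamma$ leads after a short computation to $|I_2|\ge\big(\tfrac{r-4}{r-2}+\gamma\big)k$; it is precisely here that the standing assumption $r\ge4$ in the proof of Theorem~\ref{r-acyclic} enters, since $\tfrac{r-4}{r-2}$ is the correct leading constant.

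Next I would locate a copy of $K_{r-3}$ in $R[I_2]$. Since $\delta(R)\ge\big(\tfrac{r-3}{r-2}+\gamma\big)k$, every vertex of $I_2$ has at least $\delta(R)-(k-|I_2|)\ge|I_2|-\big(\tfrac1{r-2}-\gamma\big)k$ neighbours inside $I_2$, and combining this with $|I_2|\ge\big(\tfrac{r-4}{r-2}+\gamma\big)k$ yields $\delta\big(R[I_2]\big)>\big(1-\tfrac1{r-4}\big)|I_2|$, with a surplus linear in $\gamma k$. By the greedy proof of Tur\'an's theorem this produces a $K_{r-3}$ in $R[I_2]$, and the same estimate survives the deletion of any bounded number of clusters because $k\ge k_0=2r/\gamma$ is large. (For $r=4$ this step is vacuous: $K_{r-3}=K_1$, and one simply takes a vertex of $I_2$.)

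I would then assemble the two gadgets one at a time. Choose a copy $\{Z_1,\dots,Z_{r-3}\}$ of $K_{r-3}$ in $R[I_2]$. Its common neighbourhood in $R$ has size at least $k-(r-3)\big(\tfrac1{r-2}-\gamma\big)k=\big(\tfrac1{r-2}+(r-3)\gamma\big)k$, so its intersection with $I_1$ has size at least $\big(\tfrac1{r-2}+(r-3)\gamma\big)k+|I_1|-k\ge(r-2)\gamma k$, which stays positive after removing the $O(r)$ clusters already used, since $k$ is large. Pick $Y$ in this intersection; it automatically differs from every $Z_i$, and $Y\in I_1$. Now repeat the two previous paragraphs inside $R\big[I_2\setminus\{Z_1,\dots,Z_{r-3},Y\}\big]$, and then inside $I_1$ minus $\{Z_1,\dots,Z_{r-3},Y,Z'_1,\dots,Z'_{r-3}\}$, to obtain $\{Z'_1,\dots,Z'_{r-3}\}$ and $Y'$; at each step at most $2r$ clusters have been removed, which is absorbed by the $\gamma k$-slack in the bounds above. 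Finally, each of the pairs $(Z_i,Z_j)$, $(Z'_i,Z'_j)$, $(Y,Z_i)$, $(Y',Z'_i)$ is an edge of $R$ and hence an $(\eps,d)$-regular pair by the definition of the reduced graph, and the $2(r-2)$ chosen clusters are pairwise distinct by construction; this is exactly the assertion of the claim.

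The only place demanding care is the double estimate for $|I_2|$ in the first paragraph, where one must be scrupulous about invoking $\eps\le\gamma$ and $|I_1|\le k$ at the right moments; conceptually there is no obstacle, and the one genuinely new feature compared with the case $r=3$ is that $R[I_2]$ need not contain $K_{r-2}$ on its own, so the extra cluster $Y$ of each gadget must be fetched from the strictly larger set $I_1$.
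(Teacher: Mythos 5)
Your argument is correct and follows essentially the same route as the paper's proof: lower-bounding $|I_1|$ and $|I_2|$ via the degree of a vertex of $X(I_1,I_2)$, extracting a $K_{r-3}$ from $R[I_2]$ by the minimum-degree/Tur\'an argument, attaching a common neighbour $Y\in I_1$, and repeating after deleting the used clusters, with the slack $\gamma k\ge 2r$ from $k\ge k_0$ absorbing the deletions. Your explicit handling of the degenerate case $r=4$ (where $K_{r-3}=K_1$) is a welcome clarification but not a different method.
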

  \begin{claimproof}[Proof of Claim~\ref{r-acyclic:claim}]
    Let $x$ be any vertex in $X(I_1,I_2)$, and let $m=|V_1|=\cdots=|V_k|$. By
    the definition of $X(I_1,I_2)$, we have $|N(x) \cap V_i| \ge
    \gamma m$ iff $i\in I_1$, and thus
    \[\big(\tfrac{r-3}{r-2}+3\gamma\big)n\le\delta(G)\le d(x)\le \eps
    n+\big(k-|I_1|\big)\gamma m+|I_1|m\le (\eps+\gamma)n+|I_1|\tfrac{n}{k}\,.\]
    Since by~\eqref{eq:r-acyclic:eps} we have $\eps<\gamma$, we deduce
    $|I_1|\ge \big(\frac{r-3}{r-2}+\gamma\big)k$. Similarly, 
    we have $|N(x) \cap V_i| \ge
    (\frac12+\gamma)m$ iff $i\in I_2$ and therefore
    \[\big(\tfrac{r-3}{r-2}+3\gamma\big)n\le d(x)\le \eps
    n+\big(k-|I_2|\big)\big(\tfrac{1}{2}+\gamma\big)m+|I_2|m\le
    (\eps+ \tfrac12+\gamma) n+|I_2|\tfrac{n}{2k}\,,\]
    from which we obtain $|I_2|\ge\big(\frac{r-4}{r-2}+\gamma\big)k$.
    
    Since $\delta(R) \ge  \big( \frac{r-3}{r-2} + \gamma \big) k$, each cluster
    in~$R$ has at most $\big(\frac{1}{r-2}-\gamma\big)k$ non-neighbours.
    It follows that
    \[\delta\big( R[I_2] \big)  \ge  |I_2| - \tfrac{k}{r-2} + \gamma k  \ge
    \big( \tfrac{r-5}{r-4} + \gamma \big) |I_2|\,,\]
    so by Tur\'an's theorem, $R[I_2]$ contains a copy of $K_{r-3}$. We let
    its clusters be $Z_1,\ldots,Z_{r-3}$. Since each $Z_i$ is
    non-adjacent to at most $\big(\frac{1}{r-2}-\gamma\big)k$ cluster in
    $I_1$, there is a cluster $Y$ in $I_1$ adjacent in $R$ to each $Z_i$ with
    $i\in[r-3]$.
    Since $k\ge k_0$, by~\eqref{eq:r-acyclic:eps} we have $\gamma
    k-(r-2)\ge\gamma k/2$ and therefore
    \[\delta\big( R[I_2\setminus\{Y,Z_1,\ldots,Z_{r-3}\}] \big)  \ge  |I_2| - \tfrac{k}{r-2} + \tfrac12\gamma k  \ge
    \big( \tfrac{r-5}{r-4} + \tfrac12\gamma \big) \big|I_2\setminus\{Y,Z_1,\ldots,Z_{r-3}\}\big|\,.\]
    Thus we can again apply Tur\'an's theorem to
    $R[I_2\setminus\{Y,Z_1,\ldots,Z_{r-3}\}]$ to obtain a clique
    $Z'_1,\ldots,Z'_{r-3}$ in $I_2$, which has a common
    neighbour $Y'\in I_1\setminus\{Y,Z_1,\ldots,Z_{r-3}\}$, as required.
  \end{claimproof}
  
  Let $Y,Y' \in I_1$ and $Z_1,Z'_1,\ldots,Z_{r-3},Z'_{r-3} \subset
  I_2$ be the clusters given by Claim~\ref{r-acyclic:claim}. Let
  $X=X(I_1,I_2)\cap (Y'\cup
  Z'_1\cup\cdots\cup Z'_{r-3})$, and $X'=X(I_1,I_2)\setminus X$. Observe that
  $X,Y,Z_1,\ldots,Z_{r-3}$ are pairwise disjoint (as are
  $X',Y',Z'_1,\ldots,Z'_{r-3}$). Our goal now is to show that $\chi\big(G[X]\big)\le C'$.
  Since an analogous argument gives $\chi\big(G[X']\big)\le C'$ and we have
  $X(I_1,I_2)=X\dcup X'$, this will imply $\chi\big(G[X(I_1,I_2)]\big)\le
  2C'$, and thus complete the proof.

  We apply Proposition~\ref{lem:VC}, with input $\ell^*$, $t^*$, $d$ and
  $C^*$, to $(X,Y)$. Observe that, since $Y\in I_1$ and $X\subset
  X(I_1,I_2)$, we have $|N(x)\cap
  Y|\ge d|Y|$ for each $x\in X$. Recall that $\alpha$ and $C'$ were defined such
  that the conclusion of Proposition~\ref{lem:VC} is the following. Either
  $\chi\big(G[X]\big)\le C'$, or $(X,Y)$ is $(C^*,\alpha)$-rich in copies of
  $Z_{\ell^*}^{3,t^*}$. In the first case we are done, so we assume the latter.
  We  will show that this contradicts our assumption that $G$
  is $H$-free.
  
  By Lemma~\ref{q-induc} the pair $(X,Y)$ is
  $(C^*,\alpha)$-dense in copies of $Z_{\ell^*}^{3,t^*}$. We now apply
  Proposition~\ref{3-to-r-rich}, with input $r$, $\ell$, $t$, $d$,
  $\gamma$, $\alpha$, $C$, and~$\eps$
  to $X,Y,Z_1,\ldots,Z_{r-3}$. Observe that since $Z_1,\ldots,Z_{r-3}\in I_2$,
  we have $|N(x)\cap Z_i|\ge (\frac{1}{2}+\gamma)|Z_i|$ for each $x\in X$ and
  $i\in[r-3]$. Moreover, by Claim~\ref{r-acyclic:claim}, any pair of $Y,Z_1,\ldots,Z_{r-3}$ is
  $(\eps,d)$-regular. Recall that~$\ell^*$, $t^*$, $\eps_1$ and~$C^*$ were defined
  such that the conclusion of Proposition~\ref{3-to-r-rich} is that there exists
  a function $S\in\cF_\ell^{r,t}(Y\cup Z_1\cup\cdots\cup Z_{r-3})$ which is
  $(r,\ell,t,C,\alpha)$-good for $(X,Y\cup Z_1\cup\cdots\cup Z_{r-3})$.
  Finally, we apply Lemma~\ref{lem:denseembed}, with input
  $r,\ell,t,\alpha$ and $T_1,\ldots,T_\ell$, to $X$ and $Y\cup Z_1\cup\cdots\cup
  Z_{r-3}$. By the definition of~$C$, this lemma gives that
  $H\subset Z_\ell^{r,t}(T_1,\ldots,T_\ell)$ is contained in $G$, a contradiction.
\end{proof}

Finally, we put the pieces together and complete the proof of Theorem~\ref{mainthm}.

\begin{proof}[Proof of Theorem~\ref{mainthm}]
  Let $H$ be a graph with $\chi(H) = r \ge 3$, and recall that $\cM(H)$ denotes
  the decomposition family of $H$. By Proposition~\ref{noforest}, if $\cM(H)$
  does not contain a forest then $\delta_\chi(H) = \frac{r-2}{r-1}$, and by
  Theorem~\ref{thm:forest}, if $\cM(H)$ does contain a forest then
  $\delta_\chi(H) \le \frac{2r-5}{2r-3}$.
  
  Now, by Theorem~\ref{thm:borsuk}, if $H$ is not $r$-near-acyclic then
  $\delta_\chi(H) \ge \frac{2r-5}{2r-3}$, and by Theorem~\ref{r-acyclic}, if $H$
  is $r$-near-acyclic then $\delta_\chi(H) = \frac{r-3}{r-2}$. Thus
  $$\delta_\chi(H) \, \in \, \left\{ \frac{r-3}{r-2}, \, \frac{2r-5}{2r-3}, \,
  \frac{r-2}{r-1} \right\},$$ where $\delta_\chi(H) \neq \frac{r-2}{r-1}$ if and
  only if $H$ has a forest in its decomposition family, and $\delta_\chi(H) =
  \frac{r-3}{r-2}$ if and only if $H$ is $r$-near-acyclic, as required.
\end{proof}

\section{Open questions}\label{probsec}

Although we have determined $\delta_\chi(H)$ for every graph $H$, there are
still many important questions left unresolved. In this section we shall discuss
some of these. We begin by conjecturing that the assumption on the minimum
degree can be weakened to force the boundedness of the chromatic number, as
Brandt and Thomass\'e~\cite{BraTho} proved in the case of the triangle.

\begin{conj}
For every graph $H$ with $\delta_\chi(H) = \lambda(H)$, there exists a constant
$C(H)$ such that the following holds. If $G$ is an $H$-free graph on $n$
vertices and $\delta(G) > \lambda(H)n$, then $\chi(G) \le C(H)$.
\end{conj}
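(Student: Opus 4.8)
The plan is to reduce to the threshold regime by compactness, and then to prove a stability/structure theorem there. Suppose, towards a contradiction, that the conjecture fails for some $H$ with $\delta_\chi(H)=\lambda(H)$, so there is a sequence of $H$-free graphs $G_m$ with $\delta(G_m)>\lambda(H)\,|G_m|$ and $\chi(G_m)\to\infty$. Passing to a subsequence, $d_m:=\delta(G_m)/|G_m|\to d^\ast\ge\lambda(H)$. If $d^\ast>\lambda(H)$, then since $\delta_\chi(H)=\lambda(H)$ by Theorem~\ref{mainthm}, the graphs $G_m$ with $m$ large (so that $d_m\ge\tfrac12(d^\ast+\lambda(H))>\lambda(H)$) have chromatic number bounded by a constant, a contradiction. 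Hence $d^\ast=\lambda(H)$, and the entire difficulty is concentrated in $H$-free graphs whose minimum degree is $\big(\lambda(H)+o(1)\big)n$ but still \emph{strictly} above $\lambda(H)n$.

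Note that for any fixed slack $\gamma>0$ the regime $\delta(G)\ge(\lambda(H)+\gamma)n$ is already handled (this is precisely the content of Theorems~\ref{thm:forest} and~\ref{r-acyclic}, i.e.\ of $\delta_\chi(H)=\lambda(H)$), and the regularity method used there has no hope of reaching the exact threshold: once the relative slack $\delta(G)/n-\lambda(H)$ drops below any function of the regularity constants, the partition is simply unavailable. One therefore needs a direct, non-regularity structural argument, in the spirit of Brandt and Thomass\'e's proof~\cite{BraTho} that triangle-free graphs with $\delta(G)>n/3$ satisfy $\chi(G)\le 4$. Concretely, I would aim to establish a \emph{dichotomy}: every $H$-free graph $G$ with $\delta(G)>\lambda(H)n$ either has bounded chromatic number, or admits a homomorphism into a bounded blow-up of one of finitely many ``template'' graphs --- the generalised Andr\'asfai graphs and the $r$-Borsuk--Hajnal graphs $\BH_r$ of Section~\ref{borsuksec} among them. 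Given such a dichotomy, one finishes by checking the templates: every $r$-Borsuk--Hajnal graph (and every relative of it that can have unbounded chromatic number) has minimum degree \emph{strictly below} $\lambda(H)$ times its order --- this is exactly why $\nu>0$ is unavoidable in Theorem~\ref{thm:borsuk} --- so the strict hypothesis $\delta(G)>\lambda(H)n$ rules them out, leaving only templates of bounded chromatic number. For $r=3$ with $H=K_3$ this dichotomy is essentially Brandt--Thomass\'e; for general $3$-chromatic $H$ in the scope of the conjecture one must additionally re-insert the Zykov-embedding lemma of Section~\ref{Zykovsec} (to conclude, from ``$G$ contains a sufficiently well-structured dense subconfiguration'', that $G$ actually contains $H$, since $H$-freeness is weaker than $K_3$-freeness); and for $r\ge 4$ one would hope to lift the statement by the familiar device of peeling off $r-3$ complete-multipartite layers, as in Sections~\ref{borsuksec} and~\ref{proofsec}.

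The step I expect to be the main obstacle is establishing this structural dichotomy at the \emph{exact} threshold. The extremal-type configurations here --- blow-ups of Andr\'asfai-type graphs, of $\BH_r$, and so on --- lie precisely on the fault line between bounded and unbounded chromatic number, so the soft regularity-plus-counting argument that suffices with any positive slack cannot separate them; only a robust, combinatorial structure theorem can. Such a theorem is currently available only for $r=3$ and $H=K_3$, and developing an analogue for $r\ge 4$ --- presumably combining Brandt's analysis of the chromatic structure of dense triangle-free graphs with the paired VC-dimension and Zykov-graph machinery of Sections~\ref{VCsec}--\ref{Zykovsec} --- appears to be the crux of the problem.
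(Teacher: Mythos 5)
This statement is not a theorem of the paper but a conjecture posed in the open problems section (Section~\ref{probsec}); the paper offers no proof of it, and your proposal does not supply one either. Your opening compactness reduction is correct but contentless: it only shows that a counterexample sequence must have $\delta(G_m)/|G_m|\to\lambda(H)$, i.e.\ that the difficulty sits at the exact threshold, which is already evident from the definition of $\delta_\chi$ and from Theorems~\ref{thm:forest} and~\ref{r-acyclic}. Everything after that is a programme, not an argument. The ``dichotomy'' you propose --- that every $H$-free graph with $\delta(G)>\lambda(H)n$ either has bounded chromatic number or admits a homomorphism into a bounded blow-up of one of finitely many templates --- is essentially a restatement of (in fact a strengthening of) the conjecture itself, and you give no proof of it, as you acknowledge. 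Nor is it clear that such a finite template family exists for general $H$: the known result of Brandt and Thomass\'e~\cite{BraTho} concerns only $H=K_3$, and even there the conclusion is $\chi\le 4$ for $\delta>n/3$ rather than a homomorphism classification covering all $3$-chromatic $H$ with threshold $\tfrac13$; for $r\ge 4$ nothing of this kind is known.

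A second concrete weak point is the ``check the templates'' step. The fact that the graphs $\BH_r(W;k,\eps,\delta)$ of Theorem~\ref{thm:borsuk} have minimum degree $(\lambda(H)-\nu)v(\BH_r)$ with $\nu>0$ shows only that \emph{these particular} constructions fail to meet the strict degree bound; it does not show that every $H$-free graph of unbounded chromatic number with minimum degree exactly at (or just above) $\lambda(H)n$ must resemble them. Ruling out all such exact-threshold obstructions is precisely the missing structure theorem, and the regularity/paired-VC machinery of Sections~\ref{Zykovsec}--\ref{proofsec} cannot produce it, since (as you note) it requires a fixed positive slack $\gamma$. So the proposal identifies the right obstacle but does not overcome it: the statement remains open, and no part of your sketch beyond the trivial reduction constitutes a proof.
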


We mention that an analogous statement is not true for~$H$ with
$\delta_\chi(H)\in\{\theta(H),\pi(H)\}$ as a simple modification of our
constructions for Propositions~\ref{noforest} and~\ref{lower} shows: we merely
need to make the partite graphs used in these constructions slightly unbalanced
and to guarantee that the Erd\H{o}s graphs cover the whole partition class they
are pasted into and have a sufficient minimum degree.

For graphs $H$ with $\delta_\chi(H) = 0$ one could still ask
whether the minimum degree condition can be weakened to some function $f(n) =
o(n)$. The following well-known fact shows that this is not the case.

\begin{prop}\label{o(n)}
  Let $H$ be a graph with $\chi(H) \ge 3$, and let $f(n) = o(n)$. For every $C$
  and~$n_1$, there exist $H$-free graphs $G$ on at least $n_1$ vertices with
  $\delta(G) \ge f\big(v(G)\big)$ and $\chi(G)\ge C$.
\end{prop}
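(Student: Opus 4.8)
The plan is to take an Erd\H{o}s graph of large girth and blow it up. Since $\chi(H)=r\ge 3$, the graph $H$ contains an odd cycle, necessarily of length at most $|H|$. First I would fix a $(C,|H|+1)$-Erd\H{o}s graph $G_0$, that is, a graph with $\chi(G_0)\ge C$ and $\girth(G_0)\ge |H|+1$; such graphs exist by Erd\H{o}s' theorem~\cite{Erd59}. Passing to its $2$-core (the maximal subgraph of minimum degree at least~$2$, which is non-empty since $G_0$ contains a cycle, and whose chromatic number, girth lower bound and $H$-freeness are inherited from $G_0$ in the usual way), I may assume $\delta(G_0)\ge 2$. Write $n_0:=v(G_0)\ge 3$.

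Next, for a parameter $t\in\NATS$ to be chosen at the end, let $G$ be the graph obtained from $G_0$ by blowing up every vertex to an independent set of size~$t$. Then $v(G)=tn_0$ and $\delta(G)=t\,\delta(G_0)\ge 2t$, and moreover $\chi(G)=\chi(G_0)\ge C$: indeed $G_0\subset G$ gives $\chi(G)\ge\chi(G_0)$, while any proper colouring of $G_0$ extends to $G$ by colouring each blown-up class with the colour of its original vertex. The one point that requires an argument is that $G$ remains $H$-free, and I expect this to be the only real obstacle. Suppose $\phi$ were an embedding of $H$ into $G$, and let $\pi\colon V(G)\to V(G_0)$ send each vertex to the original vertex of its class. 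Adjacent vertices of $H$ are sent by $\phi$ to adjacent vertices of $G$, which necessarily lie in distinct (hence $G_0$-adjacent) classes, so $\pi\circ\phi$ is a graph homomorphism from $H$ to $G_0$. Applying it to an odd cycle of $H$ of length at most $|H|$ produces a closed walk of odd length at most $|H|$ in $G_0$, and hence an odd cycle of length at most $|H|$ in $G_0$, contradicting $\girth(G_0)\ge|H|+1$.

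It remains to choose~$t$. Since $f(n)=o(n)$, there is $N$ with $f(n)<(2/n_0)\,n$ for all $n\ge N$; take $t$ large enough that $tn_0\ge\max\{n_1,N\}$. Then $v(G)=tn_0\ge n_1$ and $\delta(G)\ge 2t=(2/n_0)\,v(G)>f\big(v(G)\big)$, as required. I would emphasise that it is essential here to blow up a graph of \emph{large girth}, so that the composed homomorphism is forced to produce a short odd cycle downstairs; superficially similar constructions, such as joining $G_0$ to a large independent set, would for many choices of $H$ instead create copies of $H$ (for example a copy of $H$ minus a colour class embeds readily into a high-girth $G_0$).
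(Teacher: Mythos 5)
Your proposal is correct and follows essentially the same route as the paper: blow up a $(C,|H|+1)$-Erd\H{o}s graph and use its girth to rule out copies of $H$, choosing the blow-up size so that the (linear-in-$v(G)$) minimum degree beats $f(v(G))=o(v(G))$. The only differences are cosmetic — you pass to the $2$-core rather than just assuming $\delta(G_0)\ge 1$, and you phrase $H$-freeness via the projection homomorphism and an odd cycle of $H$, whereas the paper notes that $G$ has no odd cycle on at most $v(H)$ vertices so every $v(H)$-vertex subgraph is bipartite.
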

\begin{proof}
  Given $H$, $f$, $C$ and $n_1$, let $G_0$ be a
  $(C,v(H)+1)$-Erd\H{o}s graph. Without loss of generality, we may assume
  $\delta(G_0)\ge 1$. Let $n_0$ be such that $f(n)\le n/v(G_0)$ for each $n\ge
  n_0$. Let $G$ be obtained from $G_0$ by blowing up each vertex to a set of
  size $\max(n_0,n_1)$. Then $G$ has at least $n_1$ vertices, and we
  have $\delta(G)\ge v(G)/v(G_0)\ge f\big(v(G)\big)$. Since $G_0$ contains
  no cycle on $v(H)$ or fewer vertices, $G$ contains no odd cycle with $v(H)$
  or fewer vertices. In particular, every $v(H)$-vertex subgraph of $G$ is
  bipartite, and hence $G$ is $H$-free.
\end{proof}
 
Proposition~\ref{o(n)} also implies that for graphs~$H$ with~$\delta_\chi(H)=0$
the upper bound on $\chi(G)$ for $H$-free graphs~$G$ with $\delta(G)\ge\eps n$
increases as $\eps$ goes to zero.
This suggests the following problem. Set 
\begin{align*}
& \delta_\chi(H,k) \; := \; \inf \Big\{ d \,:\, \delta(G) \ge d |G|
\;\textup{ and }\; H \not\subset G \;\; \Rightarrow \;\; \chi(G) \le k\Big\}\,,
\end{align*} 
or, equivalently,
\begin{align*}
& \chi_\delta(H,d) \; := \; \max \Big\{ \chi(G)  \,:\, \delta(G) \ge d
|G| \;\textup{ and }\; H \not\subset G \Big\}\,,
\end{align*} 
and call this the \emph{chromatic profile} of $H$.

\begin{prob}
Determine the chromatic profile for every graph $H$.
\end{prob}

As noted in the Introduction, we have, by the results of Andr\'asfai, Erd\H{o}s
and S\'os~\cite{AES}, Brandt and Thomass\'e~\cite{BraTho}, H\"aggkvist~\cite{Hagg}
and Jin~\cite{Jin}, that \[\delta_\chi(K_3,2) = \frac{2}{5}, \quad
\delta_\chi(K_3,3) = \frac{10}{29} \quad \textup{ and } \quad \delta_\chi(K_3,k)
= \frac{1}{3} \quad \textup{for every $k \ge 4$.}\] We remark that this problem
was also asked by Erd\H{o}s and Simonovits~\cite{ES}, who remarked that it
seemed (in full generality) `too complicated' to study; despite the progress
made in recent years, we still expect it to be extremely difficult. Note that
although our results give explicit upper bounds on $\chi_\delta(H,d)$ for
every graph $H$, even in the case $\delta_\chi(H)=0$, where we do not use the
Szemer\'edi Regularity Lemma, these bounds are very weak.

\smallskip

{\L}uczak and Thomass\'e~\cite{LT} suggested the following more general problem.
Given a (without loss of generality monotone) family $\cF$ of graphs, we define
\begin{align*}
& \delta_\chi(\cF) \; := \; \inf \Big\{ \delta \,:\, \exists\, C = C(\cF,\delta)
\textup{ such that if } G\in\cF \textup{ is a graph on $n$ vertices } \\ & 
\hspace{5cm} \textup{ with } \delta(G) \ge\delta n  \textup{, then } \chi(G) \le
C\Big\}\,.
\end{align*}
\begin{prob}
  What values can $\delta_\chi(\cF)$ take?
\end{prob}

Our results settle this question
completely when $\cF$ is defined by finitely many minimal forbidden
subgraphs (in which case $\delta_\chi(\cF)$ is precisely the minimum of
$\delta_\chi(H)$ over all minimal forbidden subgraphs $H$). For families $\cF$
defined by infinitely many forbidden subgraphs, however, this minimum provides only an
upper bound on $\delta_\chi(\cF)$.

{\L}uczak and Thomass\'e~\cite{LT} suggested in particular to determine
$\delta_\chi(\cB)$, where $\cB$ is the family of graphs $G$ such that for every
vertex $v\in G$, the graph $G\big[N(v)\big]$ is bipartite (as a natural
generalisation of the family of triangle-free graphs, in which every
neighbourhood is an independent set). This family is indeed defined by
infinitely many forbidden subgraphs: to be precise, by the odd wheels. {\L}uczak and Thomass\'e
gave a construction showing that $\delta_\chi(\cB)\ge\frac{1}{2}$, and
conjectured that $\delta_\chi(\cB)=\frac{1}{2}$. Since the wheel $W_5$ (i.e.,\
the graph obtained from $C_5$ by adding a vertex adjacent to all its vertices) is a
forbidden graph for $\cB$, and $\delta_\chi(W_5)=\frac{1}{2}$ by
Theorem~\ref{mainthm}, our results confirm that their conjecture is true.

\smallskip

One can generalise the concept of chromatic
threshold to uniform hypergraphs. Recently, Balogh, Butterfield, Hu, Lenz and
Mubayi~\cite{BBHLM} extended the {\L}uczak-Thomass\'e method
to uniform hypergraphs $\cH$, and thereby proved that $\delta_\chi(\cH) = 0$ for
a large family of such $\cH$. To quote from their paper, `Many open problems
remain'.

\smallskip

Finally, we would like to introduce a new class of problems relating to the
chromatic threshold. There has been a recent trend in Combinatorics towards
proving `random analogues' of extremal results in Graph Theory and Additive Number
Theory (see, for example, the recent breakthroughs of Conlon and
Gowers~\cite{ConGow} and Schacht~\cite{Schacht:KLR}). We propose the following
variation on this theme: for each graph $H$ and every function $p = p(n) \in
[0,1]$, define
\begin{align*}
& \delta_\chi\big( H, p \big) \; := \; \inf \Big\{ d \,:\, \, \exists\,
C(H,d) \text{ such that for } G=G_{n,p}, \text{ asymptotically almost surely,}\\ & \hspace{4cm}
\textup{if } G' \subset G, \; \delta(G') \ge d pn \, \textup{ and } \,
H \not\subset G', \textup{ then } \chi(G') \le C(H,d) \Big\},
\end{align*} 
where $G_{n,p}$ is the Erd\H{o}s-R\'enyi random graph. Note that when $p(n) =
1$, we recover the definition of $\delta_\chi(H)$.

\begin{prob}
  Determine $\delta_\chi(H,p)$ for every graph $H$, and every $p = p(n)$.
\end{prob}

In a forthcoming paper~\cite{CTRG} we intend to show that for every
constant $p>0$ and every graph $H$, we have
$\delta_\chi(H)=\delta_\chi(H,p)$. This is of course trivial in the case
$\delta_\chi(H)=0$, when it follows from the results of this paper together
with the well-known fact that for constant $p$, the minimum degree of
$G_{n,p}$ is asymptotically almost surely at least $pn/2$. In the case
$\delta_\chi(H)>0$, the result is not trivial: but much of the
machinery
developed in this paper can be used unchanged. The following
construction shows
that the result is best possible, in the sense that it
fails to hold for $p= o(1)$.

\begin{theorem}\label{thm:randomconst} 
Let $r \ge 3$ and $C \in \NATS$, and let $H$ be a graph with $\chi(H) = r$ and $\delta_\chi(H) \ge \lambda(H) = \tfrac{2r-5}{2r-3}$. If $\eps > 0$ is sufficiently small, the following holds. If $n^{-\eps} < p < \eps^2$, then asymptotically almost
  surely the graph $G=G_{n,p}$ contains an $H$-free subgraph $G'$
  with $\chi(G') \ge C$ and $\delta(G') \ge (1-\eps)\tfrac{r-2}{r-1}pn$.
\end{theorem}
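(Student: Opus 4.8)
The plan is to construct $G'$ by hand: take an essentially complete balanced $(r-1)$-partite graph on $[n]$ (which, inside $G_{n,p}$, automatically has minimum degree close to $\tfrac{r-2}{r-1}pn$ and is $H$-free since $\chi(H)=r$), glue a tiny high-chromatic gadget into one class, and then delete a negligible set of ``conflict'' vertices so that the gadget cannot be completed to a copy of $H$. Concretely, fix a $(C,v(H)+1)$-Erd\H{o}s graph $F_0$, i.e.\ a fixed finite graph with $\chi(F_0)\ge C$ and girth greater than $v(H)$, and put $f:=v(F_0)$. Partition $[n]$ into classes $V_1,\dots,V_{r-1}$ of sizes as equal as possible. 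Since we may choose $\eps$ small relative to $r$, $C$ and $v(H)$, the two-density $m_2(F_0)$ satisfies $\eps<1/m_2(F_0)$, so by a standard first/second moment argument $G[V_1]$ a.a.s.\ contains a copy of $F_0$; fix one such copy on a vertex set $U\subseteq V_1$ with $|U|=f$. Let $R$ be the set of vertices of $[n]\setminus V_1$ that have at least two neighbours in $U$, that is $R:=\big(\bigcup_{\{a,b\}\in\binom{U}{2}}N_G(a)\cap N_G(b)\big)\setminus V_1$, and let $G'$ be the graph on $[n]\setminus R$ whose edges are (i) all edges of $G$ between distinct classes $V_i$ that avoid $R$, and (ii) the edges of the fixed copy of $F_0$ inside $V_1$ (and no other edge inside any class). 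Then $G'\subseteq G$ and, since $G'[V_1]$ is exactly this copy of $F_0$, we have $\chi(G')\ge\chi(F_0)\ge C$.

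Next I would verify the minimum-degree bound. By Chernoff's inequality and a union bound, a.a.s.\ every $v\in[n]$ has $|N_G(v)\setminus V_{i(v)}|\ge(1-\tfrac{\eps}{2})\tfrac{r-2}{r-1}pn$ (using $pn\to\infty$) and every pair $a\ne b$ has $|N_G(a)\cap N_G(b)|\le 2np^2$ (using $np^2\gg\log n$), so that $|R|\le 2\binom{f}{2}np^2$. Because $p<\eps^2$ and $\eps$ is small relative to $f$, we get $|R|\le\tfrac{\eps}{2}\tfrac{r-2}{r-1}pn$, and hence every $v\in V(G')$ satisfies $\deg_{G'}(v)\ge|N_G(v)\setminus V_{i(v)}|-|R|\ge(1-\eps)\tfrac{r-2}{r-1}pn$.

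The crux is to show $G'$ is $H$-free. Suppose $\phi\colon H\hookrightarrow G'$ is an embedding. Each class $V_i$ with $i\ge2$ is independent in $G'$, so $I_i:=\phi^{-1}(V_i)$ for $i\ge 2$ are $r-2$ independent sets of $H$; and $J:=H\big[\phi^{-1}(V_1)\big]$ embeds into $F_0$ on at most $v(H)$ vertices, hence, since $F_0$ has girth $>v(H)$, the graph $J$ is a forest. Moreover every vertex of $I_2\cup\cdots\cup I_{r-1}$ has at most one $H$-neighbour in $V(J)$: a vertex with two such neighbours would be sent by $\phi$ to a vertex of $[n]\setminus V_1$ adjacent in $G$ to two vertices of $U$, i.e.\ into $R$, contradicting $\phi(v)\in V(G')$. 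Now delete the $r-3$ independent sets $I_3,\dots,I_{r-1}$ from $H$; the remaining graph $H'$ is partitioned into the forest $J$ and the independent set $I_2$, and no vertex of $I_2$ can be adjacent to both colour classes of any tree of $J$ (it has at most one neighbour in $J$ altogether). Since $\chi(H)=r$ and we removed only $r-3$ independent sets, $\chi(H')=3$. Thus $H'$ is near-acyclic, so $H$ is $r$-near-acyclic, which by Theorem~\ref{mainthm} would give $\delta_\chi(H)=\theta(H)<\lambda(H)$, contradicting the hypothesis $\delta_\chi(H)\ge\lambda(H)$. Hence no such $\phi$ exists and $G'$ is $H$-free, completing the proof.

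I expect the main obstacle to be the simultaneous balancing in the deletion step: the set $R$ must be small enough not to spoil the minimum-degree bound — this is precisely where $p<\eps^2$ is used, to guarantee $np^2\ll\eps pn$ — yet its definition (killing common neighbourhoods of all pairs inside the gadget) must be exactly what forces the $V_1$-part of any potential copy of $H$ to be a forest each of whose trees interacts with the outside independent sets in the way required for $r$-near-acyclicity. The remaining ingredients (existence of the gadget as a subgraph of $G_{n,p}[V_1]$, and the degree and codegree concentration) are routine for $p$ in the stated range.
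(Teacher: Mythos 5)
Your construction is essentially the paper's: the paper also partitions $[n]$ into $r-1$ balanced classes, plants a $(C,v(H)+1)$-Erd\H{o}s graph $F_0$ in $V_1$, keeps only cross-class edges plus the copy of $F_0$, and destroys the common neighbourhoods of pairs of gadget vertices (the paper deletes the \emph{edges} from each pair $u,v\in V(F_0)$ to their common neighbours in $V_2,\dots,V_{r-1}$, whereas you delete the offending \emph{vertices} $R$; the codegree bound $O(np^2)\ll \eps pn$ coming from $p<\eps^2$ plays the same role either way, and your degree, chromatic number and $F_0$-appearance estimates are fine). The conclusion is also drawn the same way: a copy of $H$ in $G'$ would certify that $H$ is $r$-near-acyclic, contradicting $\delta_\chi(H)\ge\lambda(H)$ via Theorem~\ref{mainthm}.

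There is, however, a flaw in your $H$-freeness step as written. You claim that every vertex of $I_2\cup\cdots\cup I_{r-1}$ has at most one $H$-neighbour in $V(J)$, arguing that two such neighbours would force its image into $R$. But $R$ only controls common neighbours of pairs inside $U=V(F_0)$, while $V(J)=\phi^{-1}(V_1)$ may contain vertices mapped to $V_1\setminus U$; a vertex of $V_2$ can perfectly well survive in $G'$ while having many $G$-neighbours in $V_1\setminus U$ (those cross-class edges are kept), so the claim is false in general, and the parenthetical ``it has at most one neighbour in $J$ altogether'' does not follow. The argument is easily repaired: since the only edges of $G'$ inside $V_1$ are the $F_0$-edges, every component of $J$ with at least one edge is mapped entirely into $U$, while the remaining components are isolated vertices, for which the near-acyclicity condition is vacuous. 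For a tree $T$ of $J$ with an edge, a vertex of $I_2$ adjacent to both colour classes of $T$ would have two $G$-neighbours in $U$ and hence lie in $R$, which is excluded. This weaker statement is exactly what the definition of near-acyclic requires (no vertex of the independent set sees both colour classes of a single tree), so your conclusion that $H$ would be $r$-near-acyclic — and hence the contradiction — stands once this step is restated.
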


\begin{proof}[Proof (sketch)]
  Given $r$, $H$ and $C$, we let $F$ be a fixed $(C,v(H)+1)$-Erd\H{o}s graph. We
  choose a sufficiently small $\eps>0$.

  We now construct an $H$-free subgraph~$G'$ of $G=G_{n,p}$ as follows. Let
  $V_1,\ldots,V_{r-1}$ be an arbitrary balanced partition of $[n]$. We fix
  a copy of $F$ within $G[V_1]$ (which exists asymptotically almost
  surely).  Then we delete all edges within each part $V_i$ with
  $i\in[r-1]$, except those in the copy of $F$. Moreover, for each pair of
  vertices $u,v\in V(F)$, we delete the edges from $u$ and $v$ to the common
  neighbours of $u$ and $v$ in each of $V_2,\ldots,V_{r-1}$.

  It follows that $\chi(G')\ge C$ and that $G'$ asymptotically almost
  surely has minimum degree $(1-\eps)\tfrac{r-2}{r-1}pn$. In addition it
  can easily be checked from our characterisation of graphs $H$ with
  $\delta_\chi(H)\ge\lambda(H)$ that~$G'$ is also $H$-free.
\end{proof}

Theorem~\ref{thm:randomconst} can be significantly strengthened, and we intend
to do so in~\cite{CTRG}. However, results of Kohayakawa, R\"odl and
Schacht~\cite{KohRodlSchacht} show that
we cannot increase the value $\frac{r-2}{r-1}$ in the minimum degree, i.e, $\delta_\chi(H,p) \le \pi(H)$. Thus, by Theorem~\ref{thm:randomconst}, and in
contrast to the $p=\Theta(1)$ case, if $\delta_\chi(H) \ge \lambda(H)$ and
$n^{-o(1)}<p= o(1)$, then $\delta_\chi(H,p)=\frac{r-2}{r-1}=\pi(H)$.


\bibliographystyle{amsplain_yk} 
\bibliography{ChromaticThreshold}

\end{document}